\documentclass[reqno]{amsart}
\usepackage{epsfig}
\usepackage{color}
\usepackage{amssymb,amsmath,amsthm,amstext,amsfonts}
\usepackage[mathscr]{eucal}
\usepackage{dsfont}

%%%%%%%%%%% changing backref options (each bibliographical entry indicates where it was cited) -- needs option backref=page when loading package hyperref
% \renewcommand*{\backref}[1]{}
% \renewcommand*{\backrefalt}[4]{\quad \tiny
%   \ifcase #1 (\textbf{NOT CITED.})%
%   \or    (Cited on page~#2.)%
%   \else   (Cited on pages~#2.)%
%   \fi}

\usepackage{psfrag}
\usepackage{url}
\usepackage{epstopdf}
\usepackage{epic,eepic}
\usepackage{upgreek}
\usepackage{amssymb}
\usepackage{mathrsfs}
\usepackage{verbatim}
\usepackage{mathrsfs}
\usepackage{amstext}
\usepackage{amsthm}
\usepackage{amssymb}
\usepackage{graphicx}

\usepackage[colorlinks=true, linkcolor=blue, urlcolor=red, citecolor=blue, hyperindex]{hyperref}

\pagestyle{plain} \pagenumbering{arabic}

\makeatletter \@addtoreset{equation}{section} \makeatother

\renewcommand\thetable{\thesection.\@arabic\c@table}

\theoremstyle{plain}
\newtheorem{maintheorem}{Theorem}

\newtheorem{maincorollary}{Corollary}

\newtheorem{theorem}{Theorem}[section]
\newtheorem{proposition}{Proposition}[section]
\newtheorem{lemma}{Lemma}[section]
\newtheorem{corollary}{Corollary}[section]
\newtheorem{definition}{Definition}[section]
\newtheorem{remark}{Remark}[section]

\newcommand{\de} {\delta}       \newcommand{\De}{\Delta}
\newcommand{\vep}{\varepsilon}

\newcommand{\si} {\sigma}

\newcommand{\supp}{\operatorname{supp}}

\newcommand{\graph}{\operatorname{graph}}

\newcommand{\Leb}{Leb}

\newcommand{\cP}{\mathcal{P}}

\newcommand{\cE}{\mathcal{E}}

\newcounter{main}

\title
{Volume lemmas for partially hyperbolic \\ endomorphisms and applications}

\author{A. Cruz and G. Ferreira and P. Varandas}
\address{Anderson Cruz, Centro de Ci\^encias Exatas e Tecnol\'ogicas, Universidade Federal do Rec\^oncavo da Bahia, Av. Rui Barbosa, s/n, 44380-000 Cruz das Almas, BA, Brazil}
\email{anderson.cruz@ufrb.edu.br}

\address{Giovane Ferreira, Departamento de Matem\'atica, Universidade Federal do Maranh\~ao\\
Av. dos Portugueses, 1966, Vila Bacanga, 65065-545 S�o Lu�s, MA}
\email{giovane.ferreira@ufma.br}

\address{Paulo Varandas, Departamento de Matem\'atica, Universidade Federal da Bahia\\
Av. Ademar de Barros s/n, 40170-110 Salvador, Brazil}
\email{paulo.varandas@ufba.br}

\begin{document}

\begin{abstract}
We consider partially hyperbolic attractors for non-singular endomorphisms admitting an invariant stable bundle 
and a positively invariant cone field with non-uniform cone expansion at a positive Lebesgue measure
set of points. 
We prove volume lemmas for both Lebesgue measure on the topological basin of 
the attractor and the SRB measure supported on the attractor.
As a consequence under a mild assumption 
we prove exponential 
large deviation bounds for the convergence of Birkhoff averages associated to continuous observables with respect to 
the SRB measure. 
\end{abstract}

\keywords{Volume lemmas, partially hyperbolic endomorphism, SRB measure, large deviations.}
 \footnotetext{2010 {\it Mathematics Subject classification}:
Primary 
%37C40, % Smooth ergodic theory, invariant measures
37D25; % Nonuniformly hyperbolic systems (Lyapunov exponents, Pesin theory, etc.)
Secondary 
37D30, %Partially hyperbolic systems and dominated splittings 
37D35. %Thermodynamic formalism, variational principles, equilibrium states 
%37F15,  % Expanding maps; hyperbolicity; structural stability
%37D20, %	Uniformly hyperbolic systems (expanding, Anosov, Axiom A, etc.)
} 

\maketitle

\date{\today}

%%%%%%%%%%%%%%%%%%%%%%%%%%%%
\section{Introduction}

One of the main goals of ergodic theory is to describe the  
statistical properties of dynamical systems using invariant measures. 
In particular, the thermodynamic formalism aims the construction and description of the 
statistical properties of invariant measures that are physically relevant, meaning 
equilibrium states with respect to some potential.
Among these, one should refer the SRB measures whose construction and 
the study of their statistical properties rapidly became a topic of interest of the mathematics and physics communities. 
Unfortunately, apart from the uniformly hyperbolic setting, where the existence of finite Markov partitions allows to semiconjugate
the dynamics to subshifts of finite type (see e.g. \cite{Bow75}) there is no systematic approach
for the construction of SRB measures.

In respect to that, there have been important contributions to the study of (non-singular) endomorphisms. 
In the mid seventies, Przytycki~\cite{Prz76} extended the notion of uniform hyperbolicity to the context of endomorphisms and studied Anosov endomorphisms. Here, due to the non-invertibility of the dynamics, the existence of an invariant unstable subbundle for
uniformly hyperbolic basic pieces needs to be replaced by the existence of positively invariant cone fields on which vectors are uniformly expanded by all positive iterates (we refer the reader to Subsection~\ref{sec:UH} for more details). 
In \cite{QZ02}, Qian and Zhu constructed SRB measures
for Axiom A attractors of endomorphisms,
obtaining these as equilibrium states for the geometric potential 
defined using inverse limits. A characterization of SRB measures for uniformly hyperbolic
endomorphisms can also be given in terms of dimensional characteristics of the stable
manifold (cf. \cite{UW04}). 
The thermodynamic formalism of hyperbolic basic pieces for endomorphisms
had the contribution of Mihailescu and Urbanski~\cite{MU} 
that introduced and constructed inverse SRB measures for hyperbolic attractors of endomorphisms.
Among the difficulties that arise when dealing with non-invertible hyperbolic dynamics one should refer 
that unstable manifolds may have a complicated geometrical structure 
(cf. \cite{QXZ09,UW04}). 

The ergodic theory for endomorphisms beyond the scope of uniform hyperbolicity is much incomplete. 
In the context of surface endomorphisms, a major contribution is due to Tsujii \cite{Tsu05}, that proved
that for $r\ge 19$, $C^r$-generic partially hyperbolic endomorphisms 
with an unstable cone field admit finitely many SRB measures whose ergodic basins of attraction cover Lebesgue 
almost every point in the manifold (the regularity can be dropped to $r\ge 2$ on the space of non-singular 
endomorphisms). While it remains unknown if these SRB measures are (generically) hyperbolic, an extension to higher 
dimension should present significant difficulties as discussed by the author (cf.~\cite[page 43]{Tsu05}).
In \cite{CV17} the first and the third authors constructed SRB measures for robust classes classes
of non-singular $C^2$-partially hyperbolic endomorphisms, proved their statistical stability and the continuous dependence 
of their entropies with respect to the dynamics.  

Our purpose here is to contribute to the ergodic theory of partially hyperbolic attractors for endomorphisms. 
Given the non-invertible structure of the attractor, even in the case of hyperbolic endomorphisms 
unstable manifolds have self intersections (hence are badly understood in the ambient space) 
and stable manifolds may fail to have good geometric properties 
when lifted to the natural extension 
(see e.g \cite{Mih12b} for a detailed survey on geometric and ergodic aspects of hyperbolic sets for endomorphisms).
In the non-hyperbolic context the situation becomes even more intricate.
Here we consider attractors for $C^{1+\alpha}$ ($\alpha>0$) non-singular endomorphisms on compact Riemannian manifolds that admit uniform
contraction along a well defined invariant stable subbundle and has a positively invariant cone field.
Under a mild non-uniform cone-expansion assumption, these endomorphisms admit finitely many hyperbolic and physical 
SRB measures, these are statistically stable and their metric entropies vary continuously with respect to the underlying
dynamics, and the SRB measure is unique if the attractor is transitive (here we mean SRB measures as measures which are absolutely continuous desintegrations 
with respect to Lebesgue measure along Pesin unstable manifolds) ~\cite{CV17}.
In this context, unstable subbundles depend on the pre-orbits and are almost everywhere defined via Oseledets theorem, 
hence the unstable Jacobian is defined just almost everywhere (with respect to the SRB measure) and could fail to be 
H\"older continuous on the attractor. 
Our main contribution here is to overcome such lack of regularity of the unstable Jacobian we recover bounded distortion
and obtain two volume lemmas at instants of hyperbolicity ($c$-cone-hyperbolic times defined in \cite{CV17}).
The (invariant) SRB measure is supported on the attractor, and the volume lemma for the SRB measure lies on the
regularity of the unstable Jacobian along preimages of unstable disks. 
The volume lemma for Lebesgue measure on the topological basin of the attactor is even more subtle. 
Since local unstable manifolds may intersect we make use of some 'fake foliations' tangent to the cone field and,  
in order to use a Fubini-like argument, one needs to estimate the largest possible distortion using \emph{any} subspaces in 
the cone field  (cf. Proposition~\ref{lem. distorcao volume 2} for the 
precise statement) and its dependence on a small
open neighborhood. 
These volume lemmas are the main results of the paper. As a consequence we derive large deviations upper bounds for the
velocity of convergence of Birkhoff averages associated to continuous observables using sub-additive observables
associated to the invariant cone field. 

This paper is organized as follows. In Section~\ref{sec:Main} we describe the class of partially hyperbolic attractors considered here and state our results on large deviations. We recall some preliminary results in Section \ref{sec:Preliminaries}. In Section~\ref{sec:volume1} we prove a first volume lemma, for the Lebesgue 
measure on the basin on the attractor. The volume lemma for the SRB measure is proven in Section~\ref{sec:volume2}.
In Section~\ref{sec:LDP} we prove the large deviation bounds for the volume and SRB measure and, finally we give some examples at Section~\ref{sec:examples}.

%%%%%%%%%
\section{Statement of the main result}\label{sec:Main}

\subsection{Setting}
Throughout, let $M$ be a compact connected Riemannian $d^*$-dimensional manifold.
Assume that $f:M\rightarrow M$ is a $C^{1+\alpha}$, $\alpha>0$, non-singular endomorphism  
and that  $\Lambda\subset M$ a compact positively $f$-invariant subset.
Let $U\supset\Lambda$ be an open set such that $f(\overline U) \subset U$ and assume
$$
\Lambda= \Lambda_f:=\bigcap_{n\geq0}f^{n}(\overline{U}),
$$
where $\overline{U}$ denotes the closure of the set $U$.
We say that $\Lambda$ is a \emph{partially hyperbolic attractor} if
 there are a continuous splitting of the tangent bundle
$T_{U}M=E^{s}\oplus F$ (where $F$ is not necessarily $Df$-invariant)
and constants $ c>0$ and $0<\lambda<1$ satisfying:
\begin{enumerate}
\item [(H1)]$Df(x)\cdot E_{x}^{s}=E_{f(x)}^{s}$
for every $x\in U$;\label{enu:invstabledirection}
\item [(H2)]$\|Df^{n}(x)|_{E^{s}_x}\|\leq\lambda^{n}$ for every $x\in U$ and $n\in\mathbb{N}$;\label{enu:contstabledirection}
\item [(H3)] there is a cone field $U\ni x\mapsto C(x)$ of constant dimension $\dim F$ so that $Df(x) (C(x))\subseteq C(f(x))$
for every $x\in U$, and there is a positive Lebesgue measure set $H\subset U$ 
so that: 
\begin{equation}\label{eq:nuexpansion}
\limsup_{n\to\infty}\frac{1}{n}\sum_{j=0}^{n-1}\log\|(Df(f^{j}(x))|_{C(f^{j}(x))})^{-1}\|\leq-2c<0,
\end{equation}
 for every $x\in H$\emph{\label{enu:conenue}} (the expression $\|(Df(f^{j}(x))|_{C(f^{j}(x))})^{-1}\|$ is made precise at Subsection~\ref{sec:Preliminaries});
\item [(H4)]$\left\Vert Df(x)\, v\right\Vert \,\left\Vert Df(x)^{-1} w\right\Vert \leq\lambda\,\left\Vert v\right\Vert \,\left\Vert w\right\Vert $ for every $v\in E_{x}^{s}$, all $w\in Df(x) ( C(x))$
and $x\in U$.\label{enu:domination}
\end{enumerate}
Throughout the paper the constant $c>0$ will be fixed as above.
 We say that $\Lambda$ is an \emph{attractor},
$E^{s}$ is a \emph{uniformly contracting bundle} and that $C$ is a
 \emph{non-uniformly expanding cone field}(or that $C$ exhibits non-uniform expansion). 
 We will denote 
 $d:=\dim(F)=\dim(C)$.  

\begin{theorem}\label{thm:TEO1} \cite[Theorem~A]{CV17}
Let $f$ be a $C^{1+\alpha}$ non-singular endomorphism, $\Lambda$ be a partially hyperbolic attractor and
$U$ be the basin of attraction of $\Lambda$. 
Then there are finitely 
many SRB measures for $f$ whose basins of attraction cover $H$,
Lebesgue mod zero. Moreover, these measures are hyperbolic and, if $Leb(U\setminus H)=0$ then there are finitely many 
SRB measures for $f$ in $U$. Furthermore,
if $f\mid_\Lambda$ is transitive then there is a unique SRB measure for $f$ in $U$.
\end{theorem}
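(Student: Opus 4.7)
The plan is to follow the standard Alves--Bonatti--Viana strategy for constructing SRB measures from non-uniformly expanding behavior, adapted to the partially hyperbolic endomorphism setting where the unstable direction is only given as a cone field and determined almost everywhere via pre-histories. The starting point is to apply Pliss's lemma to the non-uniform cone expansion condition \eqref{eq:nuexpansion}: for Lebesgue a.e.\ $x\in H$, a positive density subsequence $\{n_k(x)\}$ of \emph{$c$-cone-hyperbolic times} exists at which one has uniform backward contraction along any direction in the forward pushed cone. The key distortion estimate to establish is that at a cone-hyperbolic time $n$, the Jacobian of $f^{-n}$ (defined on an appropriate local inverse branch tangent to the invariant cone) is H\"older-comparable on a definite-size ball around $f^n(x)$; this replaces the unavailable H\"older regularity of the unstable Jacobian on the attractor itself, and it must be proven using only subspaces in the cone field rather than a true unstable foliation.

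Once this bounded distortion is available, I would pick a small disk $D\subset U$ tangent to the cone field, push forward normalized Lebesgue on $D$ by $f^n$, and form the Ces\`aro averages $\mu_N = \frac{1}{N}\sum_{n=0}^{N-1} f^n_*(\mathrm{Leb}_D/\mathrm{Leb}(D))$. Any weak-$*$ accumulation $\mu$ is $f$-invariant. To see $\mu$ is SRB, one restricts to those iterates which are cone-hyperbolic times for a positive density set of points in $D$: their images contain disks of uniform size on which the density w.r.t.\ the induced Riemannian volume is bounded above and below. Taking limits, $\mu$ admits disintegrations along local disks tangent to the cone field which are absolutely continuous with bounded densities; a Pesin-type argument then identifies these disks with Pesin unstable manifolds almost surely, yielding the SRB property. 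Positive Lyapunov exponents along the cone direction come from the Pliss density, and the negative exponents along $E^s$ from (H2), so the measures are hyperbolic.

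For finiteness and the covering-of-$H$ property, I would decompose $\mu$ into ergodic components, and show that each ergodic SRB measure carries a uniform mass of Lebesgue on $H$ in its basin. Since the cone-hyperbolic disks have uniform size, a Vitali-type argument implies any two ergodic SRB measures whose basins intersect in positive Lebesgue measure must coincide (here one uses absolute continuity of stable holonomies along $E^s$ to transport Lebesgue density across basins), so the number of ergodic SRB measures is bounded by the number of disjoint basins of uniform volume and hence is finite. If $\mathrm{Leb}(U\setminus H)=0$, the basins of these finitely many measures cover $U$ up to a Lebesgue null set by the same absolute continuity. Transitivity of $f|_\Lambda$ then forces uniqueness: distinct ergodic SRB measures would have disjoint supports which are each positively invariant closed sets intersecting $\Lambda$, contradicting that $f|_\Lambda$ has a dense orbit.

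The main obstacle, and the reason the argument is not a routine transcription of the diffeomorphism case, is the bounded distortion estimate referenced in the first paragraph. Because $F$ is only a cone, not a $Df$-invariant bundle, the inverse branches used to go from $f^n(x)$ back to $x$ along ``unstable'' disks are not canonical; one must work with \emph{any} $d$-dimensional subspace inside the cone and keep distortion bounds uniform over this choice, and also uniform in a neighborhood (this is precisely what the authors of \cite{CV17} isolate and what the present paper sharpens in its Proposition~\ref{lem. distorcao volume 2}). All subsequent steps --- absolutely continuous disintegration, basin covering, uniqueness --- depend on this uniformity.
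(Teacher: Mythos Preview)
This theorem is not proved in the present paper: it is quoted verbatim from \cite[Theorem~A]{CV17}, and the paper only \emph{recalls} portions of that construction (Subsections~3.3 and~\ref{sec:SRB}) in order to set up the volume lemmas. So there is no ``paper's own proof'' to compare against line by line; what can be compared is your sketch versus the outline the paper summarizes from \cite{CV17}.

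Your outline is broadly the correct \cite{CV17} strategy: Pliss $\Rightarrow$ positive density of $c$-cone-hyperbolic times, bounded distortion on hyperbolic pre-disks (Proposition~\ref{prop:distorcionhyptime}), Ces\`aro averages of $f^j_*\Leb_D$ (your $\mu_N$ matches \eqref{eq:munD}), restriction to cone-hyperbolic-time images (your wording matches the $\nu_n$ in \eqref{eq:nunD}), and absolute continuity of conditional measures on limit disks (Theorem~\ref{thm:abscontdisint}). You also correctly flag the central difficulty: distortion must be controlled uniformly over \emph{all} $d$-dimensional subspaces in the cone, not along a single invariant bundle.

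Where your sketch is thinner than the paper's recollection is the passage from ``limit disks tangent to the cone'' to ``Pesin unstable manifolds.'' You write that ``a Pesin-type argument then identifies these disks with Pesin unstable manifolds almost surely,'' but for endomorphisms this identification is genuinely nontrivial: unstable manifolds are parametrized by points $\hat x\in M^f$, accumulation disks $\Delta(\hat x)$ can self-intersect in $M$, and one must lift the whole construction to the natural extension and perform a selection of pre-orbits (Proposition~\ref{lem:suppdisk}, Lemma~\ref{prop:bijectiveprojectionondisks}, and the definition \eqref{eq:def_lift_deltax}). The paper explicitly stresses that ``a selection procedure on the space of pre-orbits (which also involve the lifting of a reference measure to the natural extension) becomes necessary''; without this step the disintegration of $\hat\mu$ along a partition \emph{subordinated to unstable manifolds} (Definition~\ref{def:srbproperty}) is not available, and the SRB property as stated cannot be verified. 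Your sketch should make this lifting explicit rather than absorbing it into ``a Pesin-type argument.''

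A smaller point: your uniqueness argument under transitivity (``distinct ergodic SRB measures would have disjoint supports which are each positively invariant closed sets'') is not quite right --- distinct ergodic measures need not have disjoint supports. The argument one actually runs uses disjointness of \emph{basins} (which do have positive Lebesgue measure, by the uniform-size-disk plus stable-holonomy absolute-continuity argument you already described) together with transitivity to force overlap of the supports and hence of the basins, yielding a contradiction.
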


Many large deviation estimates for non-uniformly hyperbolic dynamics rely on the differentiability of the
pressure function on regular (e.g. H\"older continuous) potentials, which is still an open question in our context. 
The level-1 large deviations estimates that we shall obtain here stand for \emph{continuous} observables and rely on the following volume lemmas, which are the main results in this paper and of independent interest.

\medskip
\noindent {\bf First volume lemma:} \emph{
There exists $\delta>0$ so that for every $0<\vep<\delta$ there exists $K(\vep)>0$ so that if $n$ is a $c$-cone-hyperbolic time for $x$ 
and $F_1$ is {\bf any} 
subspace of dimension $d$ contained in $C(x)$ then
\begin{equation*}
K(\vep)^{-1} e^{-\log |\det Df^n(x)\mid_{F_1}|} \le \Leb(B(x,n,\vep)) \le K(\vep) e^{-\log |\det Df^n(x)\mid_{F_1}|}.
\end{equation*}
}

\medskip
\noindent {\bf Second volume lemma:} \emph{
Assume that the density of the SRB measure $\mu$ is bounded away from zero and infinity.
There exists $\delta>0$ so that for every $0<\vep<\delta$ there exists $K(\vep)>0$ so that if $n$ is a $c$-cone-hyperbolic time for $x$
and $F_1$ is {\bf any}  subspace of dimension $d$ contained in $C(x)$ 
then
\begin{equation*}
K(\vep)^{-1} e^{-\log |\det Df^n(x)\mid_{F_1}|} \le \mu(\Lambda \cap B(x,n,\vep)) \le K(\vep) \label{key}e^{-\log |\det Df^n(x)\mid_{F_1}|}.
\end{equation*}
}

Some comments are in order. First, the precise statements of the volume lemmas appear as Propositions~\ref{le:volume1} and 
~\ref{le:volume2}, respectively. Moreover, since invariance of subbundles in the cone field are obtained when the entire pre-orbit  
is fixed, the observable $\log |\det Df^n(x)\mid_{F_1}|$ above leads to a non-stationary Birkhoff sum associated to the family of potentials
$\{\log |\det Df\mid_{Df^j(F_1)}|\}_{0\le j \le n-1}$. We overcome this fact by proving that the asymptotic behavior of this family
of potentials coincides with the sub-additive behavior of the maximal volume computed using subspaces in the cone field.  More precisely
the sequence of functions $\Gamma_n(x)=\max_{F_1 \subset C(x)} |\det Df^n(x)\!\mid_{F_1}|$
 is sub-multiplicative and, by Kingman's subadditive ergodic theorem, if $\nu$ is an invariant probability measure then
\begin{equation}\label{def:Gamma}
\Gamma_\nu(x) = \lim_{n\rightarrow \infty}  \frac1n \log  \, \Gamma_n(x) 
	\quad \text{exists for $\nu$-a.e. $x$},
\end{equation}
and 
$$
\int \Gamma_\nu(x) \, d\nu = \inf_{n\ge 1}  \frac1n \int \log  \, \Gamma_n(x) \, d\nu(x),
$$ 
and these are related with the volume expansion of any subspace of fixed dimension contained
in the cone field (cf. Corollary~\ref{cor:hyp-volume}). 
Moreover, the latter are almost everywhere
constant provided that $\nu$ is ergodic. The following result asserts that under a mild assumption on the sequence of 
cone-hyperbolic times (see Definition~\ref{def:non-lacunar} for the definition of non-lacunarity) the SRB measure is
a weak Gibbs measure (see Proposition~\ref{co:measure.control} for the precise statement) and satisfies 
large deviations estimates for continuous observables:

\begin{maintheorem}\label{thm:main}
Let $f$ be a $C^{1+\alpha}$ non-singular endomorphism,
$\Lambda$ be a transitive partially hyperbolic attractor, $U$ be the basin of attraction of $\Lambda$
and $\mu$ be the unique SRB measure for $f\mid_\Lambda$. 
Assume that the density of the SRB measure is bounded away from zero and infinity and that the 
sequence of cone-hyperbolic times is non-lacunar for $\mu$-almost everywhere. Then: (i) $\mu$ is a weak Gibbs measure, and
(ii) there exists $C>0$ so that for every continuous observable $\phi$ and every closed interval  $F\subset \mathbb R$ 
\begin{align}
\limsup_{n\to\infty} \frac1n \log &\, {\mu} \,\Big(x\in \Lambda \colon \frac1n \sum_{j=0}^{n-1} \phi (f^j(x)) \in F \Big) 
	  \le \max\{\inf_{\beta>0}{\mathcal E}_\mu(\beta), -\inf_{c\in F} I(c) +\beta \} \label{LDP-SRB},
\end{align}
where 
$
{\mathcal E}_\mu(\beta)=\limsup_{n\to\infty} \frac1n \log \mu\Big(x \colon n_1( x) >\frac{\beta n}{2\log C} \Big)
$
and
$$
I(c)=	
\sup\Big\{\int \Gamma_\nu(x)\, d\nu(x) - h_{\nu}(f)
	: \nu \in \mathcal{M}_f, \int\phi \,d\nu =c \Big\} \ge 0.
$$ 
\end{maintheorem}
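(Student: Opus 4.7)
The plan is to derive part (i) from the second volume lemma combined with the non-lacunarity hypothesis and Kingman's subadditive ergodic theorem applied to $\log\Gamma_n$, and then to obtain part (ii) via a standard dynamical-covering argument whose upper bound is organized around the pressure-type variational quantity $\int\Gamma_\nu\,d\nu-h_\nu(f)$, handling separately the points with late first cone-hyperbolic time through the term ${\mathcal E}_\mu(\beta)$.

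For part (i), I would apply the second volume lemma at the first cone-hyperbolic time $n_1(x)$ of a $\mu$-typical point $x$, obtaining
\[
K(\vep)^{-1}e^{-\log|\det Df^{n_1(x)}(x)\mid_{F_1}|}\le\mu(B(x,n_1(x),\vep)\cap\Lambda)\le K(\vep)e^{-\log|\det Df^{n_1(x)}(x)\mid_{F_1}|}
\]
for any $d$-dimensional $F_1\subset C(x)$. Choosing $F_1$ so that $|\det Df^{n_1(x)}(x)\mid_{F_1}|$ realizes $\Gamma_{n_1(x)}(x)$ and invoking \eqref{def:Gamma} identifies the exponent with $n_1(x)\int\Gamma_\mu\,d\mu+o(n_1(x))$ along a $\mu$-full measure set. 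Iterating this at the subsequent cone-hyperbolic times $\{n_k(x)\}_k$ and using non-lacunarity, namely that $n_{k+1}(x)-n_k(x)$ grows sub-linearly in $n_k(x)$, the Jacobian contribution between $n_k(x)$ and $n_{k+1}(x)$ reduces to a factor $e^{o(n)}$. Nesting $B(x,n,\vep)$ for $n_k(x)\le n<n_{k+1}(x)$ between $B(x,n_k(x),\vep)$ and $B(x,n_{k+1}(x),\vep)$ then yields the weak Gibbs property $\mu(B(x,n,\vep)\cap\Lambda)=\exp(-n\int\Gamma_\mu\,d\mu+o(n))$ required in (i).

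For part (ii), I would decompose the deviation event as $A_n\cup B_n$, where $A_n=\{x:n_1(x)>\beta n/(2\log C)\}$ and $B_n$ is its complement. The estimate $\frac1n\log\mu(A_n)\le{\mathcal E}_\mu(\beta)+o(1)$ is immediate from the definition of ${\mathcal E}_\mu$. On $B_n$ every point admits an early cone-hyperbolic time, so the weak Gibbs bound from (i) applies along a Besicovitch-type subcovering of $B_n\cap\{\frac1n S_n\phi\in F\}$ by $(n,\vep)$-Bowen balls. A Misiurewicz-style compactness argument on $\mathcal{M}_f$ bounds the covering number by $\exp(n\sup\{h_\nu(f):\int\phi\,d\nu\in F,\,\nu\in\mathcal{M}_f\}+o(n))$, while each Bowen ball carries $\mu$-mass at most $\exp(-n\inf\int\Gamma_\nu\,d\nu+o(n))$; the variational supremum merges these two contributions into $-\inf_{c\in F}I(c)$, and the $\beta$ error comes from the oscillation of the continuous observable $\phi$ on Bowen balls at scale $\vep$ and from the admissible shifts along $B_n$. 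Taking the maximum of the $A_n$ and $B_n$ contributions yields \eqref{LDP-SRB}.

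The hard part, beyond the bookkeeping, is that $\log|\det Df^n(x)\mid_{F_1}|$ is not a Birkhoff sum of a fixed continuous potential: the optimizing subspace in the cone depends on the pre-orbit, and unstable bundles are constructed only $\mu$-almost everywhere through Oseledets, without any a priori H\"older control. This forces the use of Kingman's theorem on the sub-multiplicative sequence $\Gamma_n$ in place of the classical Birkhoff argument, and the uniform-in-$F_1$ comparison between $\log|\det Df^n(x)\mid_{F_1}|$ and $\log\Gamma_n(x)$ must be extracted from the bounded distortion estimates at cone-hyperbolic times given by Proposition~\ref{lem. distorcao volume 2}. A secondary technical point is that the sub-exponential gap $\beta n/(2\log C)$ must be matched simultaneously against the error in the weak Gibbs bound, the covering count, and the non-lacunar recurrence, which is precisely what the threshold appearing in ${\mathcal E}_\mu(\beta)$ is tailored to balance.
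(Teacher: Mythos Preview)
Your treatment of part (i) is close to the paper's: the second volume lemma at cone-hyperbolic times, sandwiching $n$ between consecutive hyperbolic times $n_i(x)\le n<n_{i+1}(x)$, and non-lacunarity to kill the gap term, is exactly the mechanism. One small point: the paper records the weak Gibbs property with potential $\log|\det Df^n(x)\mid_{F_1}|$ and error $K_n(x,\vep)\asymp C^{\,n-n_i(x)}$ rather than passing to $\int\Gamma_\mu\,d\mu$, but your version follows from Corollary~\ref{cor:hyp-volume}.

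Part (ii), however, has two genuine gaps.

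\emph{First, the decomposition.} You split off $A_n=\{n_1(x)>\beta n/(2\log C)\}$, i.e.\ the set where the \emph{first} hyperbolic time is late. But the weak Gibbs error at time $n$ is governed by $n-n_i(x)$, the gap from the \emph{last} hyperbolic time before $n$. Knowing $n_1(x)$ is small tells you nothing about this gap; non-lacunarity is only an almost-everywhere asymptotic statement and gives no uniform control at fixed $n$. The paper instead splits off $\{K_n(x,2\vep)>e^{\beta n}\}$, which is exactly the set where the last-gap is large, and then observes that this set is contained in $\bigcup_{k=1}^n\{x:n_1(f^k(x))>\beta n/(2\log C)\}$. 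Crucially, the $f$-invariance of $\mu$ converts this union into $n\,\mu(\{n_1>\beta n/(2\log C)\})$, which is what produces ${\mathcal E}_\mu(\beta)$. Your argument is missing this reduction; without it the complement $B_n$ does not carry a usable weak Gibbs bound.

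\emph{Second, the covering estimate.} You propose to bound the covering number by $\exp(n\sup_\nu h_\nu(f))$ and each Bowen ball by $\exp(-n\inf_\nu\int\Gamma_\nu\,d\nu)$ separately, over measures with $\int\phi\,d\nu\in F$. This gives $\sup_\nu h_\nu(f)-\inf_\nu\int\Gamma_\nu\,d\nu$, which is generally strictly larger than $\sup_\nu\{h_\nu(f)-\int\Gamma_\nu\,d\nu\}=-\inf_{c\in F}I(c)$, since the two suprema/infima need not be realized by the same measure. The paper avoids this loss by working with the weighted partition sum $Z_n=\sum_{x\in E_n}e^{-J_n(x)}$ over a maximal $(n,\vep)$-separated set $E_n$, building the empirical measures $\sigma_n=Z_n^{-1}\sum e^{-J_n(x)}\delta_x$ and $\eta_n=\frac1n\sum_{j=0}^{n-1}f^j_*\sigma_n$, and invoking the sub-additive variational principle (as in \cite{CFH}) to get $\limsup\frac1n\log Z_n\le h_\eta(f)-\int\Gamma_\eta\,d\eta$ for a single accumulation measure $\eta$ with $\int\phi\,d\eta\in F$. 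This keeps entropy and Jacobian coupled, which is essential.

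Finally, the $+\beta$ in the bound does not come from oscillation of $\phi$; it is the threshold $e^{\beta n}$ imposed on $K_n$ in the good set, so that $\mu(\text{good part of }B_n)\le e^{\beta n}Z_n$.
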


In rough terms, the large deviations estimate ~\eqref{LDP-SRB} bounds the velocity of convergence of Birkhoff averages 
as the smallest when comparing the exponential decay rate ${\mathcal E}_\mu(\beta)$ for the tail of the first $c$-cone-hyperbolic time map  and a kind of ``distance" $I(\cdot)$ from a set of measures to the equilibrium, computed among measures with the prescribed averages in the closed interval $F$. 
The proof of Theorem~\ref{thm:main} explores the fact that under the previous assumptions the SRB measures is 
a weak Gibbs measure (cf. Proposition~\ref{co:measure.control}). For a some account on large deviations
for endomorphisms we refer the reader to \cite{AP06, CT, LQZ03} and references therein.

\begin{remark}
If one assumes that the sequence of cone-hyperbolic times is non-lacunar for $\Leb$-almost every point then we also obtain the upper bound
\begin{align}
\limsup_{n\to\infty} \frac1n \log & \,\Leb \, \Big(x\in U \colon \frac1n \sum_{j=0}^{n-1} \phi (f^j(x)) \in F 
\Big) 
	\le \max\{\inf_{\beta>0}{\mathcal E}_\mu(\beta), -\inf_{c\in F 
	} I(c) \} 	 \label{LDP-LEB}
\end{align}
for every continuous observable $\phi$ and every $c\in \mathbb R$. However, while in the case of the SRB measure 
the non-lacunarity of the sequences of cone-hyperbolic times follows from integrability of the first cone-hyperbolic time map 
this is far from immediate in the case of non-invariant probability measures (see Remark~\ref{rmk:nonlacLeb} for a more detailed
discussion).
\end{remark}

In view of Theorem~\ref{thm:main} we can now provide a criterion for exponential large deviations. Note that 
the first cone-hyperbolic time map $n_1$ is well defined $\mu$-almost everywhere. We say the first cone-hyperbolic time 
map $n_1$ has \emph{exponential tail} (with respect to $\mu$) if there exists $\eta>0$ so that $\mu(n_1>n) \le e^{-\eta n}$ for all 
$n\gg 1$.

\begin{maincorollary}\label{cor:A}
In the context of Theorem~\ref{thm:main},
if the first cone-hyperbolic time map $n_1$ has exponential tail with respect to $\mu$ 
then for every $\delta>0$ it follows that 
$$
\limsup_{n\to\infty} \frac1n \log \mu \Big(x\in \Lambda \colon \big| \frac1n \sum_{j=0}^{n-1} \phi(f^j(x)) -\int \phi\, d\mu\big| 
\ge\delta  \Big)  <0.
$$
\end{maincorollary}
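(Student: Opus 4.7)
The plan is to reduce the corollary to the level-1 upper bound of Theorem~\ref{thm:main} by two soft moves: decomposing the symmetric deviation event into two closed intervals, and then tuning the auxiliary parameter $\beta$ so that both entries of the max in Theorem~\ref{thm:main} become strictly negative. Set $m:=\int\phi\,d\mu$. Since $\phi$ is continuous on the compact manifold $M$, Birkhoff averages lie in $[\min\phi,\max\phi]$, and the event $\{|\tfrac{1}{n}\sum\phi\circ f^{j}-m|\ge\delta\}$ is covered by $\{\tfrac{1}{n}S_{n}\phi\in F_{+}\}\cup\{\tfrac{1}{n}S_{n}\phi\in F_{-}\}$ with closed intervals $F_{+}:=[m+\delta,\max\phi]$ and $F_{-}:=[\min\phi,m-\delta]$. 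By sub-additivity of $\mu$ and the identity $\limsup\tfrac{1}{n}\log(a_{n}+b_{n})=\max\{\limsup\tfrac{1}{n}\log a_{n},\,\limsup\tfrac{1}{n}\log b_{n}\}$, it is enough to prove a strictly negative upper bound on each $F_{\pm}$ separately.

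The key analytic step is to show $I_{\pm}:=\inf_{c\in F_{\pm}}I(c)>0$. The Ruelle inequality gives $h_{\nu}(f)\le\int\Gamma_{\nu}\,d\nu$ for every $f$-invariant probability $\nu$, using that $\int\Gamma_{\nu}\,d\nu$ computes the sum of the positive Lyapunov exponents of $\nu$ (since the cone $C$ has constant dimension $d$ and all its Lyapunov exponents are positive, the volume growth of \emph{any} $d$-subspace in $C(x)$ agrees $\nu$-a.e.\ with the sum of positive exponents; cf.\ Corollary~\ref{cor:hyp-volume}). Hence $I(\cdot)\ge 0$. Suppose $I(c_{0})=0$; by weak$^{*}$-compactness of $\mathcal{M}_{f}$ and upper semicontinuity of $\nu\mapsto\int\Gamma_{\nu}\,d\nu-h_{\nu}(f)$ (the first summand is the infimum over $n$ of the continuous functionals $\nu\mapsto\tfrac{1}{n}\int\log\Gamma_{n}\,d\nu$, and $h$ is upper semicontinuous in this dominated splitting setting), the supremum defining $I(c_{0})$ is attained by some invariant $\nu$ with $\int\phi\,d\nu=c_{0}$ and $h_{\nu}(f)=\int\Gamma_{\nu}\,d\nu$. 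By the Ledrappier--Young characterization adapted to partially hyperbolic endomorphisms, such a measure must be SRB; transitivity of $\Lambda$ and Theorem~\ref{thm:TEO1} then force $\nu=\mu$, whence $c_{0}=m\notin F_{\pm}$, a contradiction. Lower semicontinuity of $I$ (via the same upper semicontinuity together with a convex-combination perturbation argument that shifts the mean slightly without changing $h-\int\Gamma$ too much) combined with compactness of $F_{\pm}$ then upgrades $I>0$ on $F_{\pm}$ to $I_{\pm}>0$.

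With $I_{\pm}>0$ in hand, the exponential tail hypothesis $\mu(n_{1}>n)\le e^{-\eta n}$ instantly yields
$$\mathcal{E}_{\mu}(\beta)\le -\frac{\eta\beta}{2\log C}<0\quad\text{for every }\beta>0.$$
Fixing any $\beta\in(0,I_{\pm})$ and applying the bound of Theorem~\ref{thm:main} to $F_{\pm}$,
$$\limsup_{n\to\infty}\tfrac{1}{n}\log\mu\{x\in\Lambda:\tfrac{1}{n}S_{n}\phi(x)\in F_{\pm}\}\le\max\{\mathcal{E}_{\mu}(\beta),\,-I_{\pm}+\beta\}<0,$$
and the maximum over the two signs yields the claimed exponential decay.

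The main obstacle is the implication $I(c_{0})=0\Rightarrow \nu=\mu$. It requires an entropy-formula-plus-converse (Pesin together with Ledrappier--Young) in the non-invertible, non-H\"older unstable-Jacobian setting of the paper, and the verification that the cone-maximum quantity $\Gamma_{\nu}$ does coincide with the sum of positive Lyapunov exponents along pre-orbits in the Oseledets decomposition. The lower semicontinuity of $I$ and the manipulation of the $\beta$-parameter in Theorem~\ref{thm:main} are routine by comparison.
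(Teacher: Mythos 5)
Your strategy is essentially the paper's own: both arguments make the first entry of the max in Theorem~\ref{thm:main} strictly negative via the exponential tail (giving $\mathcal{E}_\mu(\beta)\le -\mathrm{const}\cdot\beta<0$), make the second entry strictly negative via Ruelle's inequality combined with the converse of Pesin's entropy formula for endomorphisms (\cite[Theorem VII.1.1]{QXZ09}) and the uniqueness of the SRB measure on the transitive attractor, and then choose $\beta$ small. The one genuine difference is how the second term is handled. You use Theorem~\ref{thm:main} as a black box and therefore must establish the uniform bound $\inf_{c\in F_{\pm}}I(c)>0$; the paper instead re-enters the proof of Theorem~\ref{thm:main} and works with the specific accumulation measure $\eta$ of the measures $\eta_n$, which automatically satisfies $\int\phi\,d\eta\notin(\int\phi\,d\mu-\delta,\int\phi\,d\mu+\delta)$, so that Ruelle plus the converse Pesin formula applied directly to $\eta\neq\mu$ give $h_\eta(f)-\int\Gamma_\eta\,d\eta<0$ with no attainment or semicontinuity argument. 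Be aware that your stated justification for attainment points the wrong way: $\nu\mapsto\int\Gamma_\nu\,d\nu$ is \emph{upper} semicontinuous (an infimum of continuous functionals), which helps when maximizing but not when trying to bound $\int\Gamma_\nu\,d\nu-h_\nu(f)$ away from zero from below along a minimizing sequence, and lower semicontinuity of $\nu\mapsto h_\nu(f)$ is not available here; the "Ledrappier--Young" step, by contrast, is exactly the ingredient the paper invokes. Finally, note that both your argument and the paper's share the same residual uniformity issue as $\beta\to 0$ (in the paper the measure $\eta$ depends on $\beta$), so your explicit identification of $I_{\pm}>0$ as the crux is a fair, and arguably more honest, account of what actually has to be proved.
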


%%%%%%%%%%%%%%%%%%%%%%%%%

\section{Preliminaries}\label{sec:Preliminaries}
The present section is devoted to some preliminary discussion on the notion of uniform and non-uniform
hyperbolicity for non-singular endomorphisms. 
Given the vector spaces  $V$ and $W$ and a cone $E\subset V$, let the vector space 
$
\mathcal{L}_{E}(V,W):=\left\{ T\mid_{E}:\ T\in\mathcal{L}(V,W)\right\} 
$
be endowed with the norm $\|T|_{E}\|=\sup_{0\neq v\in E}\frac{\|T\cdot v\|}{\|v\|}$.
Given a non-singular endomorphism $f: M \to M$, a $Df$-invariant and convex cone field $C$ and $x\in M$, denote by
$(Df(x)|_{C(x)})^{-1}$ the map 
$$
Df(x)^{-1}\mid_{Df(x) (C(x))} : Df(x) (C(x)) \to C(x) \subset T_x M.
$$
If $TM=E\oplus F$ is a continuous splitting of the tangent bundle, the \emph{cone field of width $a>0$ centered at $F$}
is the continuous map $x\mapsto C(x)$ that assigns to each $x\in M$ the cone of width $a>0$ centered at $F_{x}$, where
$C:=\left\{  v=v_{1}\oplus v_{2} \in E\oplus F \colon \|v_{1}\|\leq a\|v_{2}\|\right\} $.
Finally, a submanifold $D\subset M$ is \emph{tangent to
the cone field $x\mapsto C(x)$ }if $\dim(D)=\dim(C)$
and $T_{x}D\subset C(x)$, for every $x\in D$.

Let $\Lambda\subset M$ be a compact positively invariant subset of
$M$ and $\Lambda\ni x\mapsto C(x)$ be a $Df$-invariant cone field on $\Lambda$.

\begin{definition}
Let $M$ be a compact Riemannian manifold, $f:M\rightarrow M$
be a non-singular endomorphism and $c>0$. We say that $n\in\mathbb{N}$
is a \emph{$c$-cone-hyperbolic time} for $x\in M$ (with respect
to $C$) if $Df(f^{j}(x)) ( C(f^{j}(x)))\subset C(f^{j+1}(x))$
for every $0\leq j\leq n-1$ and 
\begin{equation}
\prod_{j=n-k}^{^{n-1}}\left\Vert (Df(f^{j}(x))|_{C(f^{j}(x))})^{-1}\right\Vert \leq e^{-ck}\label{eq:hyptimes}
	\quad \text{ for every $1\leq k\leq n$.}
\end{equation}
\label{def:hyptime}
\end{definition}

It is easy to see that if $n<m$ are $c$-cone hyperbolic times then $m-n$ is a $c$-cone hyperbolic 
time for $f^{n}(x)$. Moreover, if $n$ is a $c$-cone-hyperbolic time for $x\in M$ then 
\begin{eqnarray}\label{eq ineq times}
	\Vert Df^{n-j}(f^{j}(x))\cdot v\Vert\geq e^{c(n-j)}\|v\|,
\end{eqnarray}
for every $v\in C(f^{j}(x))$ and $0\le j \le n-1$ (cf. \cite{CV17}).

%%%%%%%%%%%%%%%%%%%%%%%%%%%%
\subsection{Natural extension\label{subsec:naturalextension}}

The natural extension of $M$ by $f$
is the set
$$
M^{f}:=\{ \hat{x}=(x_{-j})_{j\in\mathbb{N}} : x_{-j}\in M\mbox{ and } f(x_{-j})=x_{-j+1},\mbox{ for every }j\in\mathbb{N}\} .
$$
We can induce a metric on $M^{f}$ by
$
\hat{d}(\hat{x},\hat{y}):=\sum_{j\in\mathbb{N}}2^{-j}d(x_{-j},y_{-j}).
$
The space $M^{f}$ endowed with the metric $\hat{d}$ is a compact metric space and the topology
induced by $\hat{d}$ is equivalent to the topology induced by $M^{\mathbb{N}}$ 
endowed with Tychonov's product topology. The projection
$\pi:M^{f}\rightarrow M$ given by $\pi(\hat{x}):=x_{0}$ is a
continuous map. The lift of $f$ is the map $\hat{f}:M^{f}\rightarrow M^{f}$
given by
$
\hat{f}(\hat{x}):=(\dots,x_{-n},\dots,x_{-1},x_{0},f(x_{0})),
$
and it is clear that $f\circ\pi=\pi\circ\hat{f}$. 
Finally, for each $\hat{x}\in M^{f}$ we take $T_{\hat{x}}M^{f}:=T_{\pi(\hat{x})}M$
and set 
\[
\begin{array}{rccc}
D\hat{f}(\hat{x}):  & T_{\hat{x}}M^{f} & \rightarrow & T_{\hat{f}(\hat{x})} M^{f} \\
 & v & \mapsto & Df(\pi(\hat{x}))v.
\end{array}
\]
If $\Lambda\subset M$ is a compact positively invariant, that is,
$f(\Lambda)\subset\Lambda$, the natural extension
of $\Lambda$ by $f$ is the set of pre-orbits that lie on $\Lambda$, that is, 
$
\Lambda^{f}:=\{ \hat{x}=(x_{-j})_{j\in\mathbb{N}}:\ x_{-j}\in\Lambda\mbox{ and }f(x_{-j})=x_{-j+1}\mbox{ for every }j\in\mathbb{N}\} .
$
The projection $\pi$ induces a continuous bijection between $\hat{f}$-invariant probability measures and $f$-invariant probability measures: assigns 
every $\hat{\mu}$ in $M^{f}$ to the push forward 
$\pi_{*}\hat{\mu}=\mu$ (see e.g. \cite{QXZ09} for more details).

%%%%%%%%%%%%%%%%%%%%%%%%%
\subsection{Non-uniform hyperbolicity for endomorphisms}\label{sec:UH}

%%%%%%%%%
\subsubsection{Lyapunov exponents and non-uniform hyperbolicity}

We need the following:
\begin{proposition}\cite[Proposition I.3.5]{QXZ09}\label{prop:existsrb}
\label{prop:oseledet_nat_ext} 
Suppose that $M$ is a compact Riemannian
manifold. Let $f:M\rightarrow M$ be a $C^{1}$ map preserving a probability measure $ \mu$. There is a Borelian
set $\hat{\Delta}\subset M^{f}$ with $\hat{f}(\hat{\Delta})=\hat{\Delta}$
and $\hat{\mu}(\hat{\Delta})=1$ satisfying that for every
$\hat{x}\in\hat{\Delta}$ there is a splitting 
$T_{\hat{x}}M^{f}=E_{1}(\hat{x})\oplus\dots\oplus E_{r(\hat{x})}(\hat{x})$
and numbers
$+\infty>\lambda_{1}(\hat{x})>\lambda_{2}(\hat{x})>\dots>\lambda_{r(\hat{x})}(\hat{x})>-\infty$ (Lyapunov exponents) and multiplicities $m_{i}(\hat{x})$ for $1\leq i\leq r(\hat{x})$
such that:
\begin{enumerate}
\item $D\hat{f}(\hat{f}^{n}(\hat{x})):T_{\hat{f}^{n}(\hat{x})} M^{f}\rightarrow T_{\hat{f}^{n+1}(\hat{x})} M^{f}$
is a linear isomorphism for every $n\in\mathbb{Z}$;
\item the functions $r:\hat{\Delta}\rightarrow\mathbb{N}$, $m_{i}:\hat{\Delta}\rightarrow\mathbb{N}$ and 
$\lambda_{i}:\hat{\Delta}\rightarrow\mathbb{R}$ are $\hat{f}$-invariant ;
\item $\dim(E_{i}(\hat{x}))=m_{i}(\hat{x})$
for every $1\leq i\leq r(\hat{x})\le \dim M $;
\item the splitting is $D\hat{f}$-invariant, that is, $D\hat{f}(\hat{x})\cdot E_{i}(\hat{x})=E_{i}(\hat{f}(\hat{x}))$,
for every $1\leq i\leq r(\hat{x})$;
\item $\lim_{n\to\pm\infty}\frac{1}{n}\log\|D\hat{f}^{n}(\hat{x})\cdot u\|=\lambda_{i}(\hat{x})$
for every $u\in E_{i}(\hat{x})\backslash\left\{ 0\right\} $
and for every $1\leq i\leq r(\hat{x})$;
\item if
$
\rho_{1}(\hat{x})\geq\rho_{2}(\hat{x})\geq\dots\geq\rho_{d} (\hat{x})
$
 represent the numbers $\lambda_{i}(\hat{x})$ repeated
$m_{i}(\hat{x})$ times for each $1\leq i\leq r(\hat{x})$
and $\left\{ u_{1},u_{2},\dots,u_{d}\right\} $is a basis for $T_{\hat{x}}M^{f}$
satisfying
$
\lim_{n\to\pm\infty}\frac{1}{n}\log\|D\hat{f}^{n}(\hat{x})\cdot u_{i}\|=\rho_{i}(\hat{x})
$
for every $1\leq i\leq d$, then for any subsets $P,Q\subset\left\{ 1,2,\dots,d\right\} $ with
$P\cap Q=\emptyset$ one has
$
\lim_{n\to\infty}\frac{1}{n}\log\angle(D\hat{f}^{n}(\hat{x})\cdot E_{P},D\hat{f}^{n}(\hat{x})\cdot E_{Q})=0
$
where $E_{P}$ and $E_{Q}$ are, respectively, the subspaces generated
by $\left\{ u_{i}\right\} _{i\in P}$ and $\left\{ u_{i}\right\} _{i\in Q}$.
\end{enumerate}
Moreover, if $\hat\mu$ is ergodic then the previous functions are constant $\hat\mu$-almost everywhere.
\end{proposition}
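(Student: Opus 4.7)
\emph{Plan.} The strategy is to reduce everything to the classical invertible Oseledets multiplicative ergodic theorem, applied to the natural extension. First I would construct the lift $\hat\mu$: the projection $\pi_*\colon \mathcal{M}_{\hat f}(M^f) \to \mathcal{M}_f(M)$ is a bijection (cf.~Subsection~\ref{subsec:naturalextension}), so the $f$-invariant probability $\mu$ lifts uniquely to an $\hat f$-invariant Borel probability $\hat\mu$ on the compact metric space $(M^f,\hat d)$. Since $\hat f$ is a homeomorphism of $M^f$, the system $(\hat f,\hat\mu)$ is invertible in the measure-theoretic sense, and ergodicity transfers between $\mu$ and $\hat\mu$ through $\pi$.

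Second, I would identify the cocycle. Using the conventions $T_{\hat x}M^f = T_{\pi(\hat x)}M$ and $D\hat f(\hat x)=Df(\pi(\hat x))$ from the excerpt, the family $\{D\hat f^{\,n}\}_{n\in\mathbb Z}$ is a linear cocycle over $\hat f$. Compactness of $M$ together with $C^1$-regularity of $f$ give $\log^+\|D\hat f\|\in L^\infty(\hat\mu)$, so the forward integrability requirement is automatic. For the backward direction I would remove the critical set of $f$: setting $\hat\Delta:=\bigcap_{n\in\mathbb Z}\hat f^{-n}\bigl(\pi^{-1}(M\setminus\mathrm{crit}(f))\bigr)$, this set is $\hat f$-invariant and has full $\hat\mu$-measure provided $\mu(\mathrm{crit}(f))=0$, which is implicit in claim (1). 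On $\hat\Delta$ each $D\hat f^{\,n}$ is a linear isomorphism and $\log\|(D\hat f)^{-1}\|$ is bounded above, hence integrable.

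With these ingredients in place, the classical Oseledets theorem for invertible matrix cocycles over $(\hat f,\hat\mu)$ produces at once the measurable splitting $T_{\hat x}M^f=E_1(\hat x)\oplus\dots\oplus E_{r(\hat x)}(\hat x)$, the Lyapunov exponents $\lambda_i(\hat x)$ with multiplicities $m_i(\hat x)$, the $\hat f$-invariance of $r$, $m_i$, $\lambda_i$ and of the splitting, and the two-sided exponential growth rates in (5). Property (6), the subexponential decay of the angle between images of any pair of complementary Oseledets subspaces, is the standard Lyapunov (Pesin) regularity statement, valid in the invertible context. The constancy claim under ergodicity of $\hat\mu$ is immediate from $\hat f$-invariance of the measurable functions just constructed.

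The main obstacle is exactly the passage from the non-invertible $f$ to an \emph{invertible} derivative cocycle, since in the ambient $M$ the lack of a well-defined single-valued inverse is what forces the natural extension formalism in the first place. All the real subtlety is hidden in the construction of $\hat\mu$ and of the invariant full-measure set $\hat\Delta$ on which the backward cocycle is defined; once these are in place, the assertion is a direct transcription of the classical invertible Oseledets theorem, which is why it is routinely used as a black box in the literature on non-invertible dynamics.
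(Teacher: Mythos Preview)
The paper does not prove this proposition; it is quoted verbatim from \cite[Proposition~I.3.5]{QXZ09} and used as a black box. Your outline --- lift $\mu$ to $\hat\mu$, view $D\hat f$ as an invertible linear cocycle over the homeomorphism $\hat f$, and invoke the classical two-sided Oseledets theorem --- is exactly the standard route taken in that reference, so in spirit your approach matches.

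There is, however, one genuine gap in your integrability argument. You write that after removing the critical set, ``$\log\|(D\hat f)^{-1}\|$ is bounded above, hence integrable.'' This is false for a general $C^1$ map: excising the null set $\mathrm{crit}(f)$ does not make $\|(Df)^{-1}\|$ bounded, since it can blow up as one approaches $\mathrm{crit}(f)$. Without $\log^{+}\|(Df)^{-1}\|\in L^{1}(\mu)$ the two-sided Oseledets theorem does not give finite lower exponents, and the conclusion $\lambda_{r(\hat x)}(\hat x)>-\infty$ fails. In \cite{QXZ09} this is handled by an explicit integrability hypothesis (e.g.\ $\log|\det Df|\in L^{1}(\mu)$), and in the present paper it is automatic because $f$ is assumed throughout to be a \emph{non-singular} endomorphism (a local diffeomorphism), so $\|(Df)^{-1}\|$ is continuous on the compact $M$ and genuinely bounded. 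You should either invoke the non-singularity assumption from the paper's standing hypotheses or add the integrability condition as in \cite{QXZ09}; the rest of your plan then goes through.
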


Given an $f$-invariant probability measure $\mu$, we say that $\mu$ is \emph{hyperbolic} if is has no zero Lyapunov exponents. 
Associated to this formulation of the concept of Lyapunov exponents
we have the existence of unstable manifolds(stable manifolds, see \cite[Proposition V.4.5]{QXZ09} ) for almost every point with respect to some invariant
measure with some positive Lyapunov exponents. Consider the subspaces 
$
E_{\hat{x}}^{u}:=\bigoplus_{\lambda_{i}(\hat{x})>0}E_{i}(\hat{x})\text{ and }E_{\hat{x}}^{cs}:=\bigoplus_{\lambda_{i}(\hat{x})\leq0}E_{i}(\hat{x}).
$

\begin{proposition}\cite[Proposition V.4.4]{QXZ09}
\label{prop:unstable_manifold} There is a countable number of compact
subsets $(\hat{\Delta}_{i})_{i\in\mathbb{N}}$, of $M^{f}$ with $\bigcup_{i\in\mathbb{N}}\hat{\Delta}_{i}\subset\hat{\Delta}$
and $\hat{\mu}(\hat{\Delta}\backslash\bigcup_{i\in\mathbb{N}}\hat{\Delta}_{i})=0$
such that:
\begin{enumerate}
\item for each $\hat{\Delta}_{i}$ there is $k_{i}\in\mathbb{N}$ satisfying
$\dim(E^{u}(\hat{x}))=k_{i}$ for every $\hat{x}\in\hat{\Delta}_{i}$ and the subspaces
$E_{\hat{x}}^{u}$ and $E_{\hat{x}}^{cs}$ depend continuously on
$\hat{x}\in\hat{\Delta}_{i}$;
\item for any $\hat{\Delta}_{i}$ there is a family of $C^{1}$ embedded
$k_{i}$-dimensional disks $\left\{ W_{loc}^{u}(\hat{x})\right\} _{\hat{x}\in\hat{\Delta}_{i}}$
in $M$ and real numbers $\lambda_{i}$, $\vep\ll\lambda_{i}$,
$r_{i}<1$, $\gamma_{i}$, $\alpha_{i}$ and $\beta_{i}$ such that
the following properties hold for each $\hat{x}\in\hat{\Delta}_{i}$:
\begin{enumerate}
\item there is a $C^{1}$ map $h_{\hat{x}}:O_{\hat{x}}\rightarrow E_{\hat{x}}^{cs}$,
where $O_{\hat{x}}$ is an open set of $E_{\hat{x}}^{u}$ which contains\linebreak
$\left\{ v\in E_{\hat{x}}^{u}:\ \|v\|<\alpha_{i}\right\} $ satisfying: (i)
$h_{\hat{x}}(0)=0$ and $Dh_{\hat{x}}(0)=0$; (ii) Lip$(h_{\hat{x}})\leq\beta_{i}$ and \linebreak $Lip(Dh_{\hat{x}}(\cdot))\leq\beta_{i}$;
and (iii) $W_{loc}^{u}(\hat{x})=\exp_{x_{0}}(graph(h_{\hat{x}}))$.
\item for any $y_{0}\in W_{loc}^{u}(\hat{x})$ there is a unique
$\hat{y}\in M^{f}$ such that $\pi(\hat{y})=y_{0}$ and
$$
dist(x_{-n},y_{-n})\leq \min\{ r_{i}e^{-\vep_{i}n}, \gamma_{i}e^{-\lambda_{i}n}dist(x_{0},y_{0})\}, \; \text{ for every }n\in\mathbb{N},
$$
\item if
$
\hat{W}_{loc}^{u}(\hat{x}):=\big\{ \hat{y}\in M^{f}:\ y_{-n}\in W_{loc}^{u}(\hat{f}^{-n}(\hat{x})),\text{ for every }n\in\mathbb{N}
\big\}$
then $\pi:\hat{W}_{loc}^{u}(\hat{x})\rightarrow W_{loc}^{u}(\hat{x})$
is bijective and $\hat{f}^{-n}(\hat{W}_{loc}^{u}(\hat{x}))\subset\hat{W}_{loc}^{u}(\hat{f}^{-n}(\hat{x}))$.
\item for any $\hat{y},\hat{z}\in\hat{W}_{loc}^{u}(\hat{x})$ it holds 
$
dist_{\hat{f}^{-n}(\hat{x})}^{u}(y_{-n},z_{-n})\leq\gamma_{i}e^{-\lambda_{i}n}dist_{\hat{x}}^{u}(y_{0},z_{0})
$
for every $n\in\mathbb{N}$, where $dist_{\hat{x}}^{u}$ is the distance
along $W_{loc}^{u}(\hat{x})$.
\end{enumerate}
\end{enumerate}
\end{proposition}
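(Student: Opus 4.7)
The plan is to prove this classical Pesin-type stable/unstable manifold theorem for the non-invertible map $f$ by lifting to the natural extension $\hat f:M^f\to M^f$, where $\hat f$ is invertible, and then projecting the constructed manifolds back to $M$. The Oseledets splitting $T_{\hat x}M^f=E^u_{\hat x}\oplus E^{cs}_{\hat x}$ given by the preceding proposition is the starting point; the content of the statement is that the forward contraction along $E^u$ under $\hat f^{-1}$ integrates to an honest invariant family of local unstable disks with the listed regularity and shadowing properties.

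First, I would extract the countable exhaustion $(\hat\Delta_i)_i$ by so-called Pesin blocks: compact subsets of the Oseledets regular set $\hat\Delta$ on which the dimension $k_i=\dim E^u$, the exponents $\lambda_i$, the angles $\angle(E^u_{\hat y},E^{cs}_{\hat y})$, and the constants appearing in the convergence $\tfrac1n\log\|D\hat f^n(\hat y)\mid_{E^{u}}\|\to \lambda_i$ are all uniform. The existence of such an exhaustion is a standard consequence of Lusin's theorem applied to the measurable data produced by Oseledets, and it turns non-uniform hyperbolicity into uniform hyperbolicity relative to a Lyapunov inner product $\langle\cdot,\cdot\rangle'_{\hat y}$ that absorbs the subexponential angle distortion. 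On each $\hat\Delta_i$ the continuity of $\hat x\mapsto E^u_{\hat x},E^{cs}_{\hat x}$ is automatic because the data is uniform, giving property (1).

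Next, I would implement the Hadamard–Perron graph transform in Lyapunov charts. Using $\exp_{x_0}$ pulled back by a uniformly bounded change of coordinates that sends $(E^u_{\hat y},E^{cs}_{\hat y})$ to the standard splitting, the local representative of $f$ at each point of the orbit becomes a small $C^{1+\alpha}$ perturbation of the hyperbolic linear map $D\hat f(\hat y)$. The key point specific to the endomorphism setting is that the graph transform must be run along the pre-orbit of $\hat x$ in $M^f$: given a candidate graph $\Gamma_0$ of a map from an $\alpha_i$-ball in $E^u_{\hat x}$ into $E^{cs}_{\hat x}$, one inductively pulls back along $\hat f^{-1}$ inside the charts over $\hat f^{-n}(\hat x)$, which is unambiguous because $\hat f$ is invertible even though $f$ is not. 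By the uniform hyperbolicity in the Lyapunov metric, this pull-back is a contraction on the space of Lipschitz graphs with $\|Dh\|\le\beta_i$ and $\mathrm{Lip}(Dh)\le\beta_i$; the Banach fixed point gives $h_{\hat x}$ satisfying (2)(a), with all the uniform estimates required for $r_i,\gamma_i,\alpha_i,\beta_i$ extracted from the block.

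For the remaining properties, each point $y_0\in W^u_{loc}(\hat x)$ comes with a canonical pre-orbit $\hat y$ obtained by the graph-transform construction: $y_{-n}$ is defined as the image of $y_0$ under the inverse of the local chart dynamics at step $n$, and belongs to $W^u_{loc}(\hat f^{-n}(\hat x))$. The forward $\vep_i$-shadowing bound and the sharper $e^{-\lambda_i n}\mathrm{dist}(x_0,y_0)$ bound in (2)(b) follow from the uniform backward contraction in the Lyapunov metric combined with equivalence with the ambient metric on $\hat\Delta_i$; the uniqueness of $\hat y$ with $\pi(\hat y)=y_0$ follows because any other pre-orbit would spread apart exponentially in the $cs$-direction and leave the chart. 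The bijectivity in (2)(c) is then immediate from this uniqueness, and the intrinsic contraction (2)(d) is the graph transform contraction transported back to the manifold via the Lyapunov charts. The main obstacle is precisely the non-invertibility of $f$ on $M$: the construction must be carried out coherently over a whole backward trajectory in $M^f$, and one must show that the pre-orbit selected by the graph transform is the unique pre-orbit of $y_0$ that stays exponentially close to $\hat x$'s pre-orbit — this is what ultimately makes the projection $\pi:\hat W^u_{loc}(\hat x)\to W^u_{loc}(\hat x)$ bijective and turns the abstract invariant graph into a bona fide local unstable manifold in $M$.
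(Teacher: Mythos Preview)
The paper does not supply its own proof of this proposition: it is quoted verbatim from \cite[Proposition~V.4.4]{QXZ09} and used as a black box. Your outline --- exhausting $\hat\Delta$ by Pesin blocks on which the Oseledets data is uniform, passing to Lyapunov charts, and running the Hadamard--Perron graph transform along pre-orbits in the natural extension so that the non-invertibility of $f$ is bypassed by the invertibility of $\hat f$ --- is the standard and correct strategy, and is essentially the one carried out in the cited reference.
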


%%%%%%%%%
\subsection{SRB measures}

The notion of SRB measure for endomorphisms depends intrinsically on the 
existence of unstable manifolds, whose geometry and construction are more involving than 
for diffeomorphisms as unstable manifolds may have self intersections (hence do not generate an invariant foliation). 

Let $(X,\mathcal{A},\mu)$ be a probability space and $\mathcal{P}$
be a partition of $X$. We say that $\mathcal{P}$ is a measurable
partition if there is a 
sequence of countable partitions
of $X$, $(\mathcal{P}_{j})_{j\in\mathbb{N}}$, such that
$\mathcal{P}=\bigvee_{j=0}^{\infty}\mathcal{P}_{j}\ \mod0.$
Consider the projection $p:X\rightarrow\mathcal{P}$ that associate
to each $x\in X$ the atom $\mathcal{P}(x)$ that contains $x$.
We say that a subset $\mathcal{Q}\subset\mathcal{P}$ is measurable
if, and only if $p^{-1}(\mathcal{Q})$ is measurable. It is easy to see that the family of all measurable sets of $\mathcal{P}$
is a $\sigma$-algebra of $\mathcal{P}$. 
If $\mu$ is an $f$-invariant probability
measure with at least one positive Lyapunov exponent at $\mu$ almost
every point, a measurable partition $\mathcal{P}$ of $M^{f}$ is
\emph{subordinated to unstable manifolds} if for $\hat{\mu}$
a.e. $\hat{x}\in M^{f}$
(1) $\pi|_{\mathcal{P}(\hat{x})}:\mathcal{P}(\hat{x})\rightarrow\pi(\mathcal{P}(\hat{x}))$
is bijective, and
(2) 
there is a submanifold $W_{\hat{x}}$ with dimension $k(\hat{x})$
in $M$ such that $W_{\hat{x}}\subset W^{u}(\hat{x})$, $\pi(\mathcal{P}(\hat{x}))\subset W_{\hat{x}}$
and $\pi(\mathcal{P}(\hat{x}))$ contains an open neighborhood of $x_{0}$ in $W_{\hat{x}}$,
where $\mathcal{P}(\hat{x})$ denotes the element of the partition $\cP$ that contains $\hat x$ and
$k(\hat x)$ denotes the number of positive Lyapunov exponents at $\hat x$.
Moreover, taking $\tilde{\mu}:=p_{*}\mu$,
,  a
\emph{disintegration of $\mu$ with respect to $\mathcal{P}$} is a family $(\mu_{P})_{P\in\mathcal{P}}$
of probability measures on $X$ so that: 
\begin{enumerate}
\item $\mu_{P}(P)=1$ for $\tilde{\mu}$ a.e. $P\in\mathcal{P}$;
\item for every measurable set $E\subset X$ the map $\mathcal{P}\ni P\mapsto\mu_{P}(E)$ is measurable;
\item $\mu(E)=\int\mu_{P}(E) \, d\tilde{\mu}(P)$ for every measurable set $E\subset X$.
\end{enumerate}
We are now in a position to define the SRB property for invariant measures.
\begin{definition}
\label{def:srbproperty}We say that an $f$-invariant and ergodic probability
measure $\mu$ is an \emph{SRB measure} if  it has at least one positive Lyapunov exponent 
almost everywhere and for every partition $\mathcal{P}$ subordinated
to unstable manifolds one has 
$
\pi_{*}\hat{\mu}_{\mathcal{P}(\hat{x})}\ll Leb_{\,W_{\hat{x}}}
	\;\text{for $\hat{\mu}$ a.e. $\hat{x}\in M^{f}$},
$
where $(\hat{\mu}_{\mathcal{P}(\hat{x})})_{\hat{x}\in M^{f}} $
is a disintegration of $\hat{\mu}$ with respect to $\mathcal{P}$.
\end{definition}

The construction of the SRB measures for these partially hyperbolic attractors follows from careful control of densities of push-forward 
of the Lebesgue measure on disks tangent to the cone field \cite{CV17}.
By (H3), there exists a disk $D$  that is tangent  to the cone field $C$ and so that \eqref{eq:nuexpansion} holds for a positive Lebesgue measure set in $D$ 
and SRB measures arise from the ergodic components of the 
accumulation points of the C\`esaro averages
\begin{equation}\label{eq:munD}
\mu_{n}=\frac{1}{n}\sum_{j=0}^{n-1}f_{*}^{j}Leb_{D}
\end{equation}
The non-uniform expansion on the cone field ensures 
that for every $x\in H\cap D$ 
there are infinitely many values of $n$ (with positive frequency)  so that 
$
\Vert Df^{n}(x)\cdot v\Vert\geq e^{cn}\|v\|
	\quad \text{for {\bf all} }\, v\in C(x). 
$
The previous uniformity is crucial because unstable directions (to be defined
 \emph{a posteriori} at almost every point by means of Oseledets theorem) 
will be contained in the cone field $C$ but cannot be determined a priori, and to prove the SRB measures are hyperbolic measures. 
Moreover, it guarantees that if $\mathcal{D}_{n}$ denotes a suitable family of disks in $D$ that are 
expanded by $f^{n}$ and $\nu$ is an accumulation point of the measures 
\begin{equation}\label{eq:nunD}
\nu_n:=\frac{1}{n}\sum_{j=0}^{n-1}f_{*}^{j}Leb_{\mathcal{D}_{j}}
\end{equation}
then its support is contained in a union of disks 
obtained as accumulations of disks of uniform size $f^{n}(D_n)$, for $D_n\in \mathcal{D}_{n}$, 
as $n\to\infty$.
Due to possible self-intersections of disks $\Delta$ obtained as accumulations of
 the family of disks $f^{n}(\mathcal D_{n})$ tangent to $C$  
a selection procedure on the space of pre-orbits (which also involve 
the lifting of a reference measure to the natural extension) becomes necessary. 
Such a construction involves a careful selection of pre-orbits so that accumulation disks are contained in
unstable manifolds parametrized by elements of the natural extension.
In consequence, any invariant measure supported in these disks will be
hyperbolic since it will have  only positive Lyapunov exponents in the direction complementary to the stable bundle.
 The proof of the SRB property involves a careful and technical choice of partitions adapted to unstable disks on the natural extension that resemble an  inverse Markov tower.

%%%%%%%%%%%%%%%%%%%%%%%%%

\section{A first volume lemma}\label{sec:volume1}

%%%%%%%%%%
\subsection{$c$-Cone-hyperbolic times and geometry of disks tangent to $C$} \label{subsec:diskgeometry}

In this subsection we collect some results from \cite{CV17}.
Consider $D$ a disk in $M$ which is tangent to the cone field $C$ and
assume that $D$ satisfies $Leb_{D}(H)>0$, where $H$
is the subset given by hypothesis (H3). For $\vep>0$ and
$x\in M$ denote by
$
T_{x}M(\vep):=\left\{ v\in T_{x}M:\ \|v\|\leq\vep\right\} 
$
the ball of radius $\vep$-centered on $0\in T_{x}M$. Fix $\delta_{0}>0$
such that the exponential map restricted to $T_{x}M(\delta_{0})$
is a diffeomorphism onto its image for every $x\in M$, and set $B(x,\vep):=\exp_{x}(T_{x}M(\vep))$
for every $0<\vep\leq\delta_{0}$.
As $D$ is tangent to the cone field there exists $0<\delta\leq\delta_{0}$ such that if $y\in B(x,\delta)\cap D$
then there exists a unique linear map $A_{x}(y):T_{x}D\rightarrow E_{x}^{s}$
whose graph is parallel to $T_{y}D$. Moreover, by compactness of $M$,
$\delta$ can be taken independent of $x$. Thus one can express the H\"older variation
of the tangent bundle in these local coordinates as follows:
given $C>0$ and $0<\alpha<1$ we say that the tangent bundle $TD$
of $D$ is \emph{$(C,\alpha)$-H\"older} if
$
\|A_{x}(y)\|\leq C\cdot dist_{D}(x,y)^{\alpha},
$
for every $y\in B(x,\delta_{0})\cap D$ and $x\in D$. 
Moreover, the $\alpha$-H\"older constant of the
tangent bundle $TD$ is given by the number:
$
\kappa(TD,\alpha):=\inf\left\{ C>0:\ TD\mbox{ is }(C,\alpha)\mbox{-H\"older}\right\}.
$
Since our assumptions imply that disks tangent to the cone field are preserved under positive iteration by the 
dynamics and assumption (H4) guarantees domination,and a control of the distortion along disks tangent to the cone field
(cf. \cite[Proposition~5.1]{CV17}).
\begin{proposition}\label{prop:iteratesoftgbundleholder}
\cite[Proposition~5.1]{CV17}
There are $\alpha,\beta\in(0,1)$
and $C_{0}>0$ such that if $N$ and $f(N)$ are submanifold of $M$ 
tangent to the cone field $C$ 
then $\kappa(Df(N),\alpha)\leq\beta\cdot\kappa(TN,\alpha)+C_{0}$.
In consequence, there is $L_{1}>0$ (depending only on $f$) such that if $f^j(N)$ is a submanifold of $M$
 tangent to $C$ for all $0\le j \le k$ 
then the map
$$
\begin{array}{rccc}
J_k: & f^k(N) & \rightarrow & \mathbb{R}\\
     & x      & \mapsto     & \log\left| \det( Df(x)|_{T_xf^k(N)})\right|,
\end{array}
$$is $(L_{1},\alpha)$-H\"older continuous. 
\end{proposition}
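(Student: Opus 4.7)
The plan is to set up a graph transform argument on the Grassmannian of $d$-dimensional subspaces contained in the cone field, and then use the fact that the domination condition (H4) forces a contraction in a H\"older norm of the section representing the tangent bundle. First, I would fix $\delta>0$ so that for any submanifold $N$ tangent to $C$ and any $x\in N$, every nearby tangent space $T_y N$ (with $y\in B(x,\delta)\cap N$) can be represented as the graph of a unique linear map $A_x(y)\colon F_x\to E^s_x$ (here I identify $T_xN$ with $F_x$ via projection, which is a bi-Lipschitz isomorphism on cones of controlled width by (H4)). The H\"older constant $\kappa(TN,\alpha)$ is then the smallest $C$ with $\|A_x(y)\|\le C\,\dist_N(x,y)^\alpha$.

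Next, I would compare $A_{f(x)}(f(y))$ with the push-forward of $A_x(y)$. Writing $Df(x)$ in the splitting $E^s_x\oplus F_x$ as a block matrix, a direct calculation yields an identity of the form
\[
A_{f(x)}(f(y)) \;=\; \bigl[Df(x)|_{E^s_x}\cdot A_x(y) + B_1(x,y)\bigr]\cdot \bigl[Df(x)|_{F_x} + B_2(x,y)\cdot A_x(y)\bigr]^{-1},
\]
where $B_1,B_2$ are the off-diagonal blocks of $Df$. The key point is the product estimate from (H4): $\|Df(x)|_{E^s_x}\|\cdot \|(Df(x)|_{F_x})^{-1}\|\le \lambda<1$, together with the fact that $Df$ is $C^\alpha$. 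Using also that $\dist_{f(N)}(f(x),f(y))\ge \eta\,\dist_N(x,y)$ for some $\eta>0$ (by expansion on $C$, (H3) is not needed here; mere non-contraction on $C$, granted by domination, suffices up to a constant), I would estimate
\[
\|A_{f(x)}(f(y))\| \le \lambda\,\|A_x(y)\| + C'\,\dist_N(x,y)^\alpha,
\]
which, after dividing by $\dist_{f(N)}(f(x),f(y))^\alpha$, yields $\kappa(Df(N),\alpha)\le \beta\,\kappa(TN,\alpha)+C_0$ with $\beta:=\lambda/\eta^\alpha$ (taking $\alpha$ small enough that $\beta<1$). This is the main inequality; the main obstacle is bookkeeping the error terms that arise from both the $C^\alpha$-variation of $Df$ and from inverting the lower-right block, but it is standard graph-transform algebra once the splitting is fixed.

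Iterating this estimate $k$ times gives $\kappa(T f^k(N),\alpha)\le \beta^k\kappa(TN,\alpha)+\tfrac{C_0}{1-\beta}$, so after finitely many iterates (or immediately, if one notes that the statement only needs $f(N)$ tangent to $C$) the H\"older constant of the tangent bundle of $f^k(N)$ is bounded by a uniform constant $\Gamma:=\tfrac{C_0}{1-\beta}+\kappa(TN,\alpha)$ independent of $k$. Finally, to derive H\"older continuity of $J_k$, I would write, for $x,x'\in f^k(N)$ close,
\[
J_k(x)-J_k(x') \;=\; \log\frac{|\det(Df(x)|_{T_x f^k(N)})|}{|\det(Df(x')|_{T_{x'}f^k(N)})|}
\]
and express this difference via the uniform bound on $\|Df\|_{C^\alpha}$ (which gives an $\alpha$-H\"older term in $\dist(x,x')$) plus a term controlled by the $\alpha$-H\"older variation of the tangent plane, which by the previous paragraph is at most $\Gamma\,\dist_{f^k(N)}(x,x')^\alpha$. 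Combining these with the lower bound on $|\det Df|$ on $F$ (itself bounded below by the determinant on any $d$-plane in $C$, uniformly) gives the $(L_1,\alpha)$-H\"older bound for $J_k$ with $L_1$ depending only on $f$, $\Gamma$, and the size of the cone field, as claimed.
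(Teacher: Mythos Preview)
The paper does not actually prove this proposition: it is quoted verbatim from \cite[Proposition~5.1]{CV17} as a preliminary result, with no argument given here. So there is no in-paper proof to compare against directly.

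That said, your sketch is the standard graph-transform argument and is the one used in \cite{CV17} (which in turn follows Alves--Bonatti--Viana). A couple of small points are worth tightening. First, your displayed formula for $A_{f(x)}(f(y))$ should involve $Df(y)$ rather than $Df(x)$, since it is $T_yN$ that is being pushed forward; the discrepancy $Df(y)-Df(x)=O(\dist_N(x,y)^\alpha)$ is precisely one of the $C^\alpha$ error terms you absorb into $B_1,B_2$, so the conclusion is unaffected, but the bookkeeping is cleaner if stated correctly. Second, your justification for the lower bound $\dist_{f(N)}(f(x),f(y))\ge\eta\,\dist_N(x,y)$ is slightly off: domination (H4) by itself does not give non-contraction along $C$. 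What gives you $\eta>0$ is simply that $f$ is a non-singular endomorphism on a compact manifold, so $\|Df(x)^{-1}\|$ is uniformly bounded; this $\eta$ may well be less than $1$, which is why you then need to choose $\alpha$ small enough that $\beta=\lambda/\eta^\alpha<1$ (and also $\alpha$ no larger than the H\"older exponent of $Df$). With those two clarifications your argument is correct and matches the approach in the cited reference.
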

Hypothesis (H3) implies that there exists a disk $D\subset U$ tangent to $C$
and a positive Lebesgue measure subset $H\subset D$ 
satisfying
\begin{equation}
\limsup_{n\to\infty}\frac{1}{n}\sum_{j=0}^{n-1}\log\|(Df(f^{j}(x))|_{C(f^{j}(x))})^{-1}\|\leq-2c<0\label{eq:limsupnegative}
\end{equation}
for every $x\in H$. Pliss' lemma ensures that there exists $\theta>0$ 
(that depends only on $f$ and $c$) so that the following holds: for every $x\in H$ and $n\ge 1$ large there exists
a sequence $1\leq n_{1}(x)<\cdots<n_{l}(x)\leq n$ of $c$-cone-hyperbolic times for $x$ with $l\geq\theta n.$\label{lem:densposth}

\begin{definition}\label{def:non-lacunar}
Given $x\in \Lambda$ with infinitely many cone-hyperbolic times, let  $n_i=n_i(x)$ denote the $i$th cone hyperbolic time for $x$. 
We say that the sequence $(n_i)_{i\ge 1}$ is \emph{non-lacunar} if $n_{i+1}/n_i \to 1$ as $i\to\infty$.
\end{definition}

\begin{remark}\label{rmk:nonlacLeb}
We observe that the non-lacunarity of a given sequence of positive integers
requires the gaps between consecutive elements to be small in comparison with the growth of the elements in the sequence. 
Nevertheless, sequences of hyperbolic times are commonly non-lacunar, and this fact is a consequence of 
the integrability of the first hyperbolic time map (see e.g. \cite[Lemma~3.9]{Var12}).
\end{remark}

\begin{remark}
A simple computation shows that 
$n_1(f^{n_{i}(x)}(x))=n_{i+1}(x)-n_{i}(x)$
and that, for every $\vep>0$,  $\Leb( x\in U \colon n_{i+1}(x)-n_{i}(x) > \vep n_i(x))$ coincides with 
$\Leb( x\in U \colon n_1(f^{n_{i}(x)}(x))>  \vep n_{i}(x))$. 
Consequently, the condition
\begin{equation}\label{eq:suff-condition}
\forall \vep>0\colon  \quad \sum_{k\ge 1} \Leb( x\in U \colon n_1(f^{k}(x))>  \vep k) < \infty
\end{equation}
together with Borel-Cantelli implies that the sequence of cone-hyperbolic times is Lebesgue almost everywhere non-lacunar.
Ara\'ujo and Pac\'ifico \cite{AP06}, presentd large deviations upper bounds for the Lebesgue measure 
using an alternative method of covering at instants of hyperbolicity. 
\end{remark}

Given a submanifold $D \subset M$ we denote by $dist_{D}$ the
Riemannian distance on $D$  then $dist_{M}\leq dist_{D}$.

\begin{proposition}\label{prop:contraidist}\cite[Proposition~5.2]{CV17}
Let $D$ be a $C^{1}$ disk embedded in $U$ that is tangent to $C$. There is $\delta>0$ such that if $n$ is a 
$c$-cone-hyperbolic time for $x\in D$ and
$d(x,\partial D)>\delta$ 
then there is an open neighborhood $D(x,n,\delta)$ of $x$ in $D$ diffeomorphic 
to a $C^1$-disk $\Delta(f^{n}(x),\delta)$ by $f^{n}$. Moreover 
$$
dist_{f^{n-k}(D(x,n,\delta))}(f^{n-k}(x),f^{n-k}(y))\leq e^{-\frac{c}{2}k}dist_{\Delta(f^{n}(x),\delta)}(f^{n}(x),f^{n}(y))
$$
for every $y\in D(x,n,\delta)$ and $1\leq k\leq n.$
\end{proposition}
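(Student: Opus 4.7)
The plan is to construct $D(x,n,\delta)$ by successive pullback through inverse branches along the finite orbit $x,f(x),\dots,f^{n}(x)$, with the $c$-cone-hyperbolic time hypothesis \eqref{eq:hyptimes} giving the raw contraction and Proposition~\ref{prop:iteratesoftgbundleholder} (H\"older regularity of the tangent bundle of iterated disks) converting the pointwise contraction at the orbit of $x$ into a contraction valid on the whole disk. First I fix $\delta>0$ small enough that (a) $\exp_y$ is a diffeomorphism on the $\delta$-ball of $T_yM$ uniformly in $y\in M$; (b) $f$ is invertible on balls of radius a definite multiple of $\delta$, so that local inverse branches exist; and (c) for any disk $N\subset U$ tangent to $C$ with $\kappa(TN,\alpha)$ bounded by a uniform constant coming from Proposition~\ref{prop:iteratesoftgbundleholder}, the multiplicative distortion of $(Df|_{TN})^{-1}$ between any two points of $N$ at distance $\le\delta$ is at most $e^{c\alpha/2}$. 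Define $\Delta(f^n(x),\delta)$ to be the connected component containing $f^n(x)$ of $B(f^n(x),\delta)\cap f^n(D)$ (well defined by the injectivity radius choice and by $d(x,\partial D)>\delta$).

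Then I argue by descending induction on $k=0,1,\dots,n$. The inductive statement is that there is an inverse branch of $f^{k}$ sending $\Delta(f^n(x),\delta)$ diffeomorphically onto a disk $D_{n-k}\subset D$ passing through $f^{n-k}(x)$, that $D_{n-k}$ is tangent to $C$, and that
\[
\mathrm{dist}_{D_{n-k}}(f^{n-k}(x),f^{n-k}(y))\le e^{-ck/2}\,\mathrm{dist}_{\Delta(f^n(x),\delta)}(f^n(x),f^n(y))
\]
for every $y$ with $f^n(y)\in\Delta(f^n(x),\delta)$. Tangency to $C$ is propagated backwards because $C$ is forward invariant (H3): if $Df(D_{n-k-1})\subset C$, then $TD_{n-k-1}$ itself lies in the preimage cone, which by $Df$-invariance and domination (H4) must coincide with the intersection of $C$ with itself. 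The diameter estimate combined with $d(x,\partial D)>\delta$ guarantees that the pullback $D_{n-k-1}$ remains inside $D$, so the next inverse branch is defined; by Proposition~\ref{prop:iteratesoftgbundleholder} its tangent bundle is uniformly H\"older, so the distortion clause (c) applies at the next step.

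For the step $k\to k+1$, I transfer the pointwise cone-hyperbolic bound to a uniform bound on the disk. For any $y\in D_{n-k}$, comparing $\|(Df(f^{n-k-1}(y))|_{T_{f^{n-k-1}(y)}D_{n-k-1}})^{-1}\|$ with $\|(Df(f^{n-k-1}(x))|_{C(f^{n-k-1}(x))})^{-1}\|$ costs at most a factor $e^{c\alpha/2\cdot\mathrm{dist}(f^{n-k-1}(x),f^{n-k-1}(y))^{\alpha}}\le e^{c\alpha/2\,\delta^\alpha}$ by choice (c) of $\delta$ and the H\"older estimate; multiplying along the first $k+1$ steps gives accumulated distortion bounded by $\exp\!\bigl(\sum_{j\ge0}c\alpha(e^{-c\alpha j/2}\delta)^\alpha/2\bigr)$, which is a convergent geometric series and hence a uniform constant. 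Swallowing this constant into the improved exponent $c/2$ instead of $c$ yields the claimed contraction rate.

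The main obstacle is precisely this last point: turning a pointwise multiplicative bound (hyperbolic time at the orbit of $x$) into a uniform bound over the disk without losing more than a factor of $\tfrac12$ in the exponent. This requires that the H\"older distortion errors form a summable geometric series when indexed by the backward iterates, which is why $\delta$ must be chosen after fixing $\alpha$ and $c$; larger $\delta$ would let the accumulated distortion overwhelm the contraction. Once this is done, extracting the distance inequality follows by integrating the derivative bounds along a minimizing path in $\Delta(f^n(x),\delta)$ and pushing forward through the inverse branch.
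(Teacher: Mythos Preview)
The paper does not prove this proposition; it is quoted verbatim from \cite[Proposition~5.2]{CV17}, so there is no in-paper argument to compare against. Your overall scheme --- pull back $\Delta(f^n(x),\delta)$ through successive inverse branches, use the cone-hyperbolic time condition \eqref{eq:hyptimes} for the raw contraction at the orbit of $x$, and absorb the H\"older distortion into the loss of exponent $c\mapsto c/2$ --- is the standard one (it is the Alves--Bonatti--Viana argument adapted to cone fields, which is what \cite{CV17} does).

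There is, however, a genuine error in your justification that the pulled-back disks $D_{n-k-1}$ remain tangent to $C$. You write that forward invariance of $C$ forces $TD_{n-k-1}$ into ``the preimage cone, which by $Df$-invariance and domination (H4) must coincide with the intersection of $C$ with itself.'' This is false: forward invariance $Df(x)(C(x))\subset C(f(x))$ means the \emph{preimage} cone $Df(x)^{-1}(C(f(x)))$ \emph{contains} $C(x)$, not the other way around, and domination does nothing to reverse this inclusion. Knowing only that $f(D_{n-k-1})$ is tangent to $C$ gives no constraint placing $TD_{n-k-1}$ inside $C$. The correct (and simpler) argument is that each $D_{n-k}$ is by construction an open piece of $f^{n-k}(D)$ (the sheet through $f^{n-k}(x)$ determined by the chosen inverse branches), and since $D$ is tangent to $C$ and $C$ is forward $Df$-invariant, $f^{n-k}(D)$ --- hence $D_{n-k}$ --- is tangent to $C$. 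No backward propagation is needed.

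Two smaller points. First, your distortion bookkeeping is garbled: the H\"older constant governing the variation of $\|(Df|_{TN})^{-1}\|$ has nothing to do with $c\alpha/2$; it is a constant $L$ coming from the $C^{1+\alpha}$ regularity of $f$ and Proposition~\ref{prop:iteratesoftgbundleholder}. The accumulated distortion over $k$ steps is bounded by $\exp\bigl(L\sum_{j=0}^{k-1}(e^{-cj/2}\delta)^\alpha\bigr)$, and one chooses $\delta$ so small that this total is at most $e^{ck/2}$ for all $k$ (or, more cleanly, so that the single-step distortion times $e^{-c}$ is at most $e^{-c/2}$). Second, the induction as you state it is circular: you invoke the diameter bound $e^{-cj/2}\delta$ for $D_{n-j}$ while proving it. This is fine, but you should make the bootstrap explicit --- prove simultaneously by induction on $k$ that the inverse branch is defined on $D_{n-k}$, that $\mathrm{diam}(D_{n-k-1})\le e^{-c(k+1)/2}\delta$, and that $D_{n-k-1}$ stays inside (the relevant sheet of) $f^{n-k-1}(D)$.
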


We observe the constant $\delta>0$ in Proposition~\ref{prop:contraidist} is independent of $x\in M$.
The disks $D(x,n,\delta)\subset D$ are called \emph{hyperbolic pre-disks} and their 
images $\Delta(f^{n}(x),\delta)$ are called \emph{$n$-hyperbolic disks}.
We need the following bounded distortion property (cf. \cite[Proposition~5.2]{CV17}) for the Jacobian of $f$ along disks tangent to
the invariant cone field.

\begin{proposition}
\label{prop.dist. volume ABV}
Suppose $D$ is a $C^{1}$ disk embedded in $U$, $x\in D$ and let $n$ be
a $c$-cone hyperbolic time for $x$. There is $C_{1}>0$ such that
\[
C_{1}^{-1}\leq\dfrac{|det\ Df^{n}(y)|_{T_{y}D(x,n,\delta)}|}{|det\ Df^{n}(z)|_{T_{z}D(x,n,\delta)}|}\leq C_{1}
\]
for every $y,z\in D(x,n,\delta)$, where $D(x,n,\delta)$ is the hyperbolic
pre-disk at $x$.\label{prop:distorcionhyptime}
\end{proposition}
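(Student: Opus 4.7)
The plan is to telescope the log of the Jacobian ratio and use the two earlier ingredients already in hand: the uniform Hölder control of the log-Jacobian along iterates of disks tangent to $C$ (Proposition~\ref{prop:iteratesoftgbundleholder}) and the exponential backward contraction of hyperbolic pre-disks (Proposition~\ref{prop:contraidist}). Writing $N = D(x,n,\delta)$ and $y,z\in N$, by the chain rule applied along the iterated disks $f^j(N)$ (which are all tangent to $C$ for $0\le j\le n$ by positive invariance of the cone field),
\[
\log\frac{|\det Df^{n}(y)|_{T_{y}N}|}{|\det Df^{n}(z)|_{T_{z}N}|}
=\sum_{j=0}^{n-1}\bigl[J_{j}(f^{j}(y))-J_{j}(f^{j}(z))\bigr],
\]
where $J_{j}\colon f^{j}(N)\to\mathbb{R}$ is the log-Jacobian of $f$ restricted to $Tf^{j}(N)$.

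Next I would bound each term using Proposition~\ref{prop:iteratesoftgbundleholder}, which gives a uniform $(L_{1},\alpha)$-Hölder constant for $J_{j}$, independent of $j$ and $n$:
\[
|J_{j}(f^{j}(y))-J_{j}(f^{j}(z))|\;\le\; L_{1}\,\mathrm{dist}_{f^{j}(N)}(f^{j}(y),f^{j}(z))^{\alpha}.
\]
Then I invoke Proposition~\ref{prop:contraidist} with $k=n-j$ (which applies since $n$ is a $c$-cone-hyperbolic time for $x$) to get backward contraction in the intrinsic distance:
\[
\mathrm{dist}_{f^{j}(N)}(f^{j}(y),f^{j}(z))\;\le\; e^{-\frac{c}{2}(n-j)}\,\mathrm{dist}_{\Delta(f^{n}(x),\delta)}(f^{n}(y),f^{n}(z))\;\le\; 2\delta\,e^{-\frac{c}{2}(n-j)},
\]
since both $f^{n}(y),f^{n}(z)$ lie in a disk of intrinsic diameter controlled by $\delta$.

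Combining these and summing a geometric series,
\[
\left|\log\frac{|\det Df^{n}(y)|_{T_{y}N}|}{|\det Df^{n}(z)|_{T_{z}N}|}\right|
\;\le\; L_{1}(2\delta)^{\alpha}\sum_{k=1}^{n} e^{-\frac{c\alpha}{2}k}
\;\le\; \frac{L_{1}(2\delta)^{\alpha}}{1-e^{-\frac{c\alpha}{2}}}\;=:\;\log C_{1},
\]
which yields the claimed two-sided bound with $C_{1}$ depending only on $f$, $\alpha$, $c$ and $\delta$.

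The proof is essentially a routine hyperbolic-times telescoping argument, so I do not expect a serious obstacle; the only delicate point is making sure that at each scale $j$ the Hölder estimate of Proposition~\ref{prop:iteratesoftgbundleholder} is applied to the correct intrinsic metric on $f^{j}(N)$ (the one for which Proposition~\ref{prop:contraidist} gives exponential contraction), and that the constants $L_{1}$, $\alpha$ and $\beta$ produced there are genuinely uniform in $n$ — a fact that follows because the recursion $\kappa(Df(f^{j}(N)),\alpha)\le\beta\,\kappa(Tf^{j}(N),\alpha)+C_{0}$ has $\beta<1$ and hence a bounded fixed ceiling independent of $j$.
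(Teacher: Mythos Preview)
Your argument is correct and is the standard bounded-distortion-at-hyperbolic-times computation. The paper does not actually prove this proposition in the text: it is imported from \cite[Proposition~5.2]{CV17}, and the proof there follows exactly the telescoping scheme you describe, combining the uniform H\"older bound on the one-step log-Jacobian (Proposition~\ref{prop:iteratesoftgbundleholder}) with the backward contraction of Proposition~\ref{prop:contraidist} and summing the resulting geometric series.

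One cosmetic point: Proposition~\ref{prop:contraidist} as stated only gives contraction between the center $x$ and an arbitrary $y\in D(x,n,\delta)$, not directly between two arbitrary points $y,z$; you should either insert a triangle inequality through $x$ (which costs at most a factor $2$, already absorbed in your $2\delta$) or remark that the same expansion estimate~\eqref{eq ineq times} on tangent vectors yields the pairwise contraction. Either way the constant $C_1$ is unaffected up to a harmless factor.
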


%%%%%%%%%%%%%%%%%%%%%%%%%
\subsection{The first volume lemma}\label{subsec.volume lemma 1}

In this subsection we restate and prove a volume lemma for the Lebesgue measure of dynamic balls on the ambient space.
The volume lemma follows from a Fubini-like argument together with two bounded distortion lemmas, one obtained as 
consequence of backward contraction property for disks tangent to the cone field at $c$-cone-hyperbolic times, and the other that
requires the control of distortion for all subspaces of maximal dimension contained in the cone field.

Recall that the cone field $C$ has constant dimension $d$.  Given $x \in M$ and two subspaces $F_1,F_2$ of dimension $d$ 
contained in the cone $C(x)$, 
there exist unique linear maps $L_{F_2}:F_1\rightarrow E^s_{x}$ and 
$L_{F_1}:F_2\rightarrow E^s_{x}$ such that 
\begin{equation}\label{defF2}F_2=\graph(L_{F_2}):=\{ v + L_{F_2}(v) \in F_1\oplus E_x^s \colon v\in F_1\},
\end{equation} 
and $F_1=\graph(L_{F_1})$. We measure the distance between two subspaces $F_1$ and $F_2$ on $T_x M$ by
$$\theta_x(F_1,F_2):=\max\{ \|L_{F_1}\|, \|L_{F_2}\|\}.$$
Note that $\theta_x(F_1,F_2)=0$ if and only if $F_1$ and $F_2$ coincide and this is roughly a projective metric.

\begin{remark}
Since $f$ is a local diffeomorphism, given $x\in M$, two subspaces $F_1, F_2 \subset T_x M$ of dimension $d$ and the linear map $L_{F_2}: F_1 \to E^s_x$ such that $F_2=\graph(L_{F_2})$, the subspaces $Df(x) F_1$ and $Df(x) F_2$
also have dimension $d$. Moreover, $Df(x) F_2$ is obtained as the graph of a unique linear map $L^1_{F_2}: Df(x) F_1 \to E^s_{f(x)}$. In addition, by \eqref{defF2} and the $Df$-invariance of the stable subbundle we get 
$$
Df(x) [v + L_{F_2}(v)] = Df(x) v + Df(x) L_{F_2}(v) \in \big[Df(x) F_1 \oplus E^s_{f(x)}\big] \cap \graph(L^1_{F_2}),
$$
and, consequently, $Df(x)L_{F_2}(v)=L_{F_2}^1 (Df(x)v)$ for each $v \in F_2$ (see Figure~1 below). 
\begin{figure}[htb]
\begin{center}
        \includegraphics[scale = 0.4]{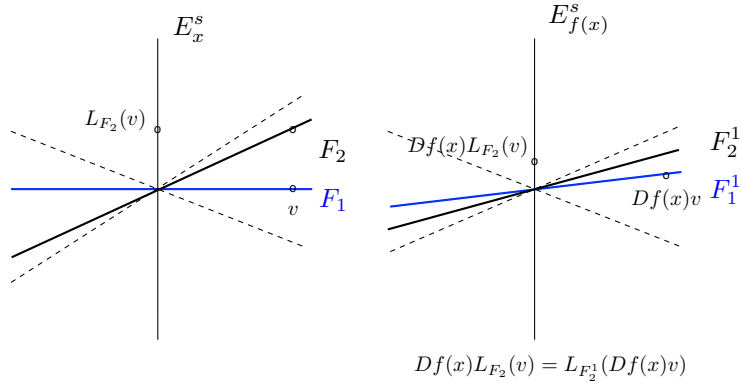}
        \caption{Iteration of subspaces that are graphs}
\end{center}
\end{figure}
By induction, we get that 
\begin{equation}\label{eq:invarianceLs}
Df^i(x)L_{F_2}(v)=L_{F_2}^i (Df^i(x)v) \quad \text{for every $v \in F_2$ and every $i\ge 1$},
\end{equation}
where $F_2^i=Df^i(x) F_2=\graph(L_{F_2}^i)$, $L^i_{F_2}: F_1^i \to E^s_{f^i(x)}$ and $F_1^i:=Df^i(x) F_1$.
\end{remark}

\begin{lemma}\label{lem. distorcao volume 1}
There exists $\kappa \in (0,1)$ such that for every $n\ge 1$, 
$$\theta_{f^i(x)}(Df^i(x)F_1,Df^i(x)F_2)\leq \kappa^i\cdot \theta_x(F_1,F_2)
\quad \text{for every $1\le i \le n$}.$$
\end{lemma}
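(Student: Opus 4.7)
The plan is to establish the base case $i=1$ with contraction factor $\kappa=\lambda$ coming directly from the domination hypothesis (H4), and then iterate. The identity in the remark giving $L^1_{F_2}(Df(x)v)=Df(x)L_{F_2}(v)$ for $v\in F_1$ is exactly what lets (H4) bite.

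For the base case, fix $w\in Df(x)F_1\subset Df(x)(C(x))$ and write $w=Df(x)v$ with $v\in F_1$, so $\|v\|=\|Df(x)^{-1}w\|$. By the remark,
\[
L^1_{F_2}(w)=Df(x)\,L_{F_2}(v),
\]
and the vector $L_{F_2}(v)$ lies in $E^s_x$. Applying (H4) with this vector in $E^s_x$ and with $w\in Df(x)(C(x))$,
\[
\|Df(x)L_{F_2}(v)\|\,\|Df(x)^{-1}w\|\le \lambda\,\|L_{F_2}(v)\|\,\|w\|\le \lambda\,\|L_{F_2}\|\,\|Df(x)^{-1}w\|\,\|w\|,
\]
and dividing by $\|Df(x)^{-1}w\|$ gives $\|L^1_{F_2}(w)\|\le \lambda\|L_{F_2}\|\,\|w\|$. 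Taking the supremum over $w$ yields $\|L^1_{F_2}\|\le \lambda\,\|L_{F_2}\|$. By symmetry the same argument (swapping the roles of $F_1$ and $F_2$) gives $\|L^1_{F_1}\|\le \lambda\,\|L_{F_1}\|$, hence
\[
\theta_{f(x)}(Df(x)F_1,Df(x)F_2)\le \lambda\,\theta_x(F_1,F_2).
\]

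For the inductive step, note that $F_1,F_2\subset C(x)$ and the $Df$-invariance of the cone field (H3) imply $Df^i(x)F_1,Df^i(x)F_2\subset C(f^i(x))$, so the graph representation used in the definition of $\theta$ remains valid at each iterate. Applying the base case successively at the points $x,f(x),\dots,f^{i-1}(x)$ produces
\[
\theta_{f^i(x)}(Df^i(x)F_1,Df^i(x)F_2)\le \lambda^i\,\theta_x(F_1,F_2),
\]
so one may take $\kappa=\lambda\in(0,1)$.

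The only conceptual subtlety is verifying that (H4), which is stated only for iterated images $w\in Df(x)(C(x))$ (not for arbitrary cone vectors), still applies in the right-hand side of the induction step; but since we always take $w\in Df(f^{j}(x))F_1^{j}\subset Df(f^j(x))(C(f^j(x)))$, this requirement is automatically satisfied. Beyond that, the argument is purely a domination computation via graph transforms and I expect no serious obstacle.
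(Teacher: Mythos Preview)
Your proof is correct and follows essentially the same approach as the paper: both use the graph-transform identity $L_{F_2}^i(Df^i(x)v)=Df^i(x)L_{F_2}(v)$ together with the domination condition (H4) to obtain the contraction with $\kappa=\lambda$. The paper compresses the argument into a single displayed inequality (applying (H4) iteratively in one stroke), whereas you spell out the base case and the induction explicitly and note the cone-invariance needed for (H4) to apply at each step; the content is the same.
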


\begin{proof}
Given $n\ge 1$, $v\in F_1$ and $v_i=Df^i(x)v$ for every $0\le i \le n$. 
The invariance condition~\eqref{eq:invarianceLs} and the domination condition (H4) 
	\begin{eqnarray*}
		\frac{\|L_{F_2^i}v_i\|}{\|v_i\|}
		=\frac{\|Df^i(x)L_{F_2}v\|}{\|Df^i(x)v\|}
		\le \lambda^i \frac{\|L_{F_2}v\|}{\|v\|}
		\leq \lambda^i \theta_x(F_1,F_2) 
	\end{eqnarray*}
for every $0\le i \le n$. 
Since estimates for $\|L_{F_1^i}\|$ are analogous the lemma follows by taking $\kappa=\lambda$.
\end{proof}

\begin{proposition}\label{lem. distorcao volume 2}
If $n\ge 1$, $d>1$, $F_1,F_2\subset T_xM$ are subspaces of dimension $d$ contained in $C(x)$ and $\theta_x(F_1,F_2)<\frac{1}{d^2-1}$ then there exists
$K>0$  such that
$$
\left|\frac{|det(Df^n(x)|_{F_2})|}{|det(Df^n(x)|_{F_1})|} - 1\right|\leq  K \, \frac{\theta_x(F_1,F_2)}{1-(d^2-1)\theta_x(F_1,F_2)}.
$$
\end{proposition}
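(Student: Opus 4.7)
The plan is to reduce the Jacobian ratio to a ratio of projection Jacobians that depend only on relative positions of subspaces, not on the (possibly enormous) expansion of $Df^n$. Set $A := Df^n(x)$, $\theta := \theta_x(F_1,F_2)$, and $V := A(F_1)$. Since $F_i \subset C(x)$ implies $F_i \cap E^s_x = \{0\}$, we have $T_x M = F_1 \oplus E^s_x$, so the projection $\pi_1 : T_x M \to F_1$ parallel to $E^s_x$ restricts to an isomorphism $\pi_1|_{F_2} : F_2 \to F_1$. By forward invariance of the cone field and of $E^s$, the images $A(F_1), A(F_2)$ lie in $C(f^n(x))$, and one similarly defines a projection $\pi_V : T_{f^n(x)}M \to V$ parallel to $E^s_{f^n(x)}$, whose restriction $\pi_V|_{A(F_2)} : A(F_2) \to V$ is an isomorphism.

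The structural heart of the argument is the identity $A \circ \pi_1|_{F_2} = \pi_V \circ A|_{F_2}$ as maps $F_2 \to V$, which follows at once from $A(E^s_x) = E^s_{f^n(x)}$: for $v = v_1 + L_{F_2}(v_1) \in F_2$ with $v_1 \in F_1$, both compositions evaluate to $A v_1$ because $A L_{F_2}(v_1) \in E^s_{f^n(x)}$ is killed by $\pi_V$. Taking absolute values of Jacobians along the two equal compositions yields
\[
\frac{|\det(A|_{F_2})|}{|\det(A|_{F_1})|} = \frac{|\det(\pi_1|_{F_2})|}{|\det(\pi_V|_{A(F_2)})|}.
\]
Thus the complicated ratio on the left has been rewritten as a quotient of two tilt projection Jacobians.

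Next, I would compute each projection Jacobian as an inverse Gram determinant. Fix an orthonormal basis $\{e_i\}$ of $F_1$ and set $s_i := L_{F_2}(e_i)\in E_x^s$; then $\|s_i\|\le \theta$, $\{f_i:=e_i+s_i\}$ is a basis of $F_2$, and $\pi_1|_{F_2}$ sends $f_i$ to $e_i$. A direct computation gives $|\det(\pi_1|_{F_2})|^{-2}=\det(\langle f_i,f_j\rangle)=\det(I+R)$, where $R_{ij}:=\langle e_i,s_j\rangle+\langle s_i,e_j\rangle+\langle s_i,s_j\rangle$ has entries bounded by $2\theta+\theta^2$. The analogous construction at $f^n(x)$ gives $|\det(\pi_V|_{A(F_2)})|^{-2}=\det(I+R')$, where the entries of $R'$ are bounded by $2\theta'+{\theta'}^2$ with $\theta':=\theta_{f^n(x)}(A(F_1),A(F_2))\le \lambda^n\theta \le \theta$ by the preceding lemma.

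The final step is a determinant estimate. Expanding $\det(I+R)$ via the Leibniz formula, the identity permutation contributes $\prod_i(1+R_{ii})=1+O(\theta)$, while every non-identity $\sigma \in S_d$ has at most $d-2$ fixed points and hence picks up at least two off-diagonal factors of order $\theta$. A careful tally across the $d!-1$ non-identity permutations — using $|R_{ij}|\le 2\theta+\theta^2$ — yields the two-sided bound $1-(d^2-1)\theta \le \det(I+R)\le 1+K_1\theta$ for $\theta<1/(d^2-1)$, with the analogous bound for $\det(I+R')$ in terms of $\theta'\le \theta$. Dividing and using $|\sqrt{1+y}-1|\le |y|$ gives
\[
\left|\frac{|\det(\pi_1|_{F_2})|}{|\det(\pi_V|_{A(F_2)})|}-1\right|\le \frac{K\theta}{1-(d^2-1)\theta}
\]
for some $K=K(d)>0$, which in view of the first displayed identity is exactly the claim. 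The main obstacle is the sharp combinatorial bookkeeping in the last step required to extract the precise denominator $1-(d^2-1)\theta$; once the projection-diagram reduction is in place, all that remains is a perturbation computation on a Gram matrix close to the identity.
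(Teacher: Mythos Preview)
Your approach is correct and genuinely different from the paper's. The paper works directly with wedge products: it writes $|\det(Df^n|_{F_2})|$ as the norm of $Df^n(e_1+L_{F_2}e_1)\wedge\cdots\wedge Df^n(e_d+L_{F_2}e_d)$ divided by $\|(e_1+L_{F_2}e_1)\wedge\cdots\wedge(e_d+L_{F_2}e_d)\|$, expands by multilinearity, and uses the identity $Df^n(x)L_{F_2}=L_{F_2}^n Df^n(x)$ together with the preceding lemma to bound the cross terms by $(d^2-1)\|L_{F_2}^n\|$ times the main term. Your projection identity $A\circ\pi_1|_{F_2}=\pi_V\circ A|_{F_2}$ is a cleaner structural reduction: it makes transparent, in one line, why the possibly huge expansion of $Df^n$ cancels from the ratio, leaving only a comparison of tilts at the two ends. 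The paper's wedge expansion achieves the same cancellation but less explicitly.

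One point to tighten: your claimed lower bound $\det(I+R)\ge 1-(d^2-1)\theta$ does not follow from the entry bound $|R_{ij}|\le 2\theta+\theta^2$ alone. For $d=2$ the diagonal terms can already contribute $-4\theta$ to first order, which violates $1-3\theta$. This is harmless for the proposition, since $\det(I+R)$ is a Gram determinant of linearly independent vectors and hence strictly positive throughout the range $\theta<\tfrac{1}{d^2-1}$; combined with $\det(I+R)=1+O(\theta)$ as $\theta\to 0$ and the analogous estimate for $\det(I+R')$, you still obtain $|\text{ratio}-1|\le C\theta$ uniformly on $(0,\tfrac{1}{d^2-1})$, and then $C\theta\le \dfrac{C\theta}{1-(d^2-1)\theta}$ gives the stated form with $K=C$. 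So the conclusion stands, but the sentence asserting the exact lower bound $1-(d^2-1)\theta$ should be replaced by the positivity-plus-$O(\theta)$ argument just described.
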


\begin{proof}
Assume $d>1$. Given a orthonormal basis  $e_1,...,e_d$ of $F_1$ we have that $\|e_1\wedge...\wedge e_d\|=1$ and
	\begin{equation}\label{eq.defDet}	
	|det(Df^n(x)|_{F_1})|=\|Df^n(x)e_1\wedge...\wedge Df^n(x)e_d\|.
	\end{equation}
	We proceed to estimate $|det(Df^n(x)|_{F_2})|$.
	Note that for any pair of orthonormal vectors $e_s, e_r$ on $F_1$  we have
	\begin{align*}
	&|\,\|Df^n(x)(e_r+  L_{F_2}e_r)\wedge Df^n(x)(e_s+L_{F_2}e_s)\|-\|Df^n(x)e_r\wedge Df^n(x)e_s\|\,| \\
		&\leq \|Df^n(x)e_r\wedge Df^n(x)L_{F_2}e_s + Df^n(x)L_{F_2}e_r\wedge Df^n(x)e_s
		+ Df^n(x)L_{F_2}e_r\wedge Df^n(x)L_{F_2}e_s\|\\
		&=\| Df^n(x)e_r\wedge L^n_{F_2} Df^n(x)e_s +  L^n_{F_2}Df^n(x)e_r \wedge Df^n(x)e_s
		+ L^n_{F_2}Df^n(x)e_r \wedge L^n_{F_2}Df^n(x)e_s\|\\
		&\leq 3\|L^n_{F_2}\| \|Df^n(x)e_r\wedge Df^n(x)e_s\|.
	\end{align*}
Using the previous argument recursively, we get
\begin{align*}
	\!\!\!\!\!\!\!\!\!\!\!|\,\|Df^n(x)(e_1+L_{F_2}e_1)\wedge...\wedge & Df^n(x)(e_d+L_{F_2}e_d)\|-\|Df^n(x)e_1\wedge...\wedge Df^n(x)e_d\|\,|\\
	&\leq (d^2-1)\|L^n_{F_2}\|\|Df^n(x)e_1\wedge...\wedge Df^n(x)e_d\|,
\end{align*} 
which implies that
\begin{align}\label{eq. est.det.1}
&\|Df^n(x)(e_1+L_{F_2}e_1)\wedge...\wedge Df^n(x)(e_d+L_{F_2}e_d)\|\\\nonumber
&\leq [1+(d^2-1)\|L^n_{F_2}\|] \,\|Df^n(x)e_1\wedge...\wedge Df^n(x)e_d\|
\end{align}
and
\begin{eqnarray}\label{eq. est.det.2}
&\|Df^n(x)(e_1+L_{F_2}e_1)\wedge...\wedge Df^n(x)(e_d+L_{F_2}e_d)\|\\ \nonumber
&\geq [1-(d^2-1)\|L^n_{F_2}\|] \,\|Df^n(x)e_1\wedge...\wedge Df^n(x)e_d\|.
\end{eqnarray}
Proceeding as above we also obtain that 
\begin{eqnarray*}
|\,\|(e_1+L_{F_2}e_1)\wedge...\wedge (e_d+L_{F_2}e_d)\|-\|e_1\wedge...\wedge e_d\|\,|\leq (d^2-1)\theta_x(F_1,F_2).
\end{eqnarray*}
or, equivalently,
\begin{eqnarray}\label{eq. est.prod.ext.2}
1-(d^2-1)\theta_x(F_1,F_2)\le \|(e_1+L_{F_2}e_1)\wedge...\wedge (e_d+L_{F_2}e_d)\| \leq 1+(d^2-1)\theta_x(F_1,F_2)
\end{eqnarray}
By Lemma~\ref{lem. distorcao volume 1}, together with \eqref{eq.defDet}, \eqref{eq. est.det.1} and \eqref{eq. est.prod.ext.2}, 
we conclude that 
	\begin{eqnarray*}
		|det(Df^n(x)|_{F_2})|&=&\frac{\|Df^n(x)(e_1+L_{F_2}e_1)\wedge...\wedge Df^n(x)(e_d+L_{F_2}e_d)\|}{	\|(e_1+L_{F_2}e_1)\wedge...\wedge (e_d+L_{F_2}e_d)\|}\\
		&\leq& \frac{1+(d^2-1)\|L^n_{F_2}\|}{1-(d^2-1)\theta_x(F_1,F_2)} \cdot\|Df^n(x)e_1\wedge...\wedge Df^n(x)e_d\|  \\
	&\leq& \frac{1+(d^2-1) \theta_{f^n(x)}(F_1^n,F_2^n) }{1-(d^2-1)\theta_x(F_1,F_2)} \cdot\|Df^n(x)e_1\wedge...\wedge Df^n(x)e_d\|,
	\end{eqnarray*}
and so
\begin{align*}
	\frac{|det(Df^n(x)|_{F_2})|}{|det(Df^n(x)|_{F_1})|}
	&\leq \frac{1+(d^2-1) \theta_{f^n(x)}(F^n_1,F^n_2) }{1-(d^2-1)\theta_x(F_1,F_2)} 
	 = 1+ \frac{(d^2-1)[\theta_x(F_1,F_2) + \theta_{f^n(x)}(F^n_1,F^n_2) ] }{1-(d^2-1)\theta_x(F_1,F_2)} \\
	& \le 1+ \frac{(d^2-1)[\theta_x(F_1,F_2) + \theta_{f^n(x)}(F_1^n,F_2^n) ] }{1-(d^2-1)\theta_x(F_1,F_2)}\\ 
	& \le 1+ \frac{(d^2-1)(1+\kappa^n)}{1-(d^2-1)\theta_x(F_1,F_2)}  \theta_x(F_1,F_2) \\
		& \le 1+ 2(d^2-1)\frac{\theta_x(F_1,F_2)}{1-(d^2-1)\theta_x(F_1,F_2)}  
\end{align*}
for every $n\ge 1$.
Analogously, using \eqref{eq.defDet}, \eqref{eq. est.det.2} and \eqref{eq. est.prod.ext.2},
\begin{align*}
	\frac{|det(Df^n(x)|_{F_2})|}{|det(Df^n(x)|_{F_1})|}
	&\geq \frac{1-(d^2-1) \theta_{f^n(x)}(F_1^n,F_2^n) }{1+(d^2-1)\theta_x(F_1,F_2)}\\
	& = 1- \frac{(d^2-1) \theta_{f^n(x)}(F_1^n,F_2^n) +(d^2-1)\theta_x(F_1,F_2)}{1+(d^2-1)\theta_x(F_1,F_2)} \\
	& \ge 1- \frac{(d^2-1) (1+\kappa^n) \theta_x(F_1,F_2)}{1+(d^2-1)\theta_x(F_1,F_2)}
	\ge 1- \frac{2(d^2-1)  \theta_x(F_1,F_2)}{1+(d^2-1)\theta_x(F_1,F_2)}.
\end{align*}
Taking $K=2(d^2-1)$ the lemma follows. 
\end{proof}

\begin{remark}\label{rmk:d1}
If $d=1$ a simplification of the previous argument yields that if $e_1\in F_1$ and $\|e_1\|=1$ then
$$
\frac{|det(Df^n(x)|_{F_2})|}{|det(Df^n(x)|_{F_1})|} = \frac{\|Df^n(x)(e_1+L_{F_2}e_1)\|}{	\|e_1+L_{F_2}e_1\|} 
	\le \frac{1+\|Df^n(x)L_{F_2}e_1\|}{	1- \theta_x(F_1,F_2)} \le \frac{1+\theta_{f^n(x)}(F_1^n,F_2^n)}{	1- \theta_x(F_1,F_2)}
$$
and the lower bound follows analogously.
\end{remark}

Using Lemma \ref{lem. distorcao volume 1} and Proposition~\ref{lem. distorcao volume 2}, we deduce the following bounded
distortion property:

\begin{corollary}\label{cor:subspaces}
There exists $K_0>0$ 
so that for every  $n \ge 1$ 
\begin{equation}\label{eq:dist1}
\frac1{K_0}\le \frac{|det(Df^n(x)|_{F_2})|}{|det(Df^n(x)|_{F_1})|} \le K_0
\end{equation}
for {\bf any} subspaces  $F_1,F_2$ of dimension d contained in the cone $C(x)$. 
\end{corollary}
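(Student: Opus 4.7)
The plan is to reduce to the pointwise estimate of Proposition~\ref{lem. distorcao volume 2} (and Remark~\ref{rmk:d1} when $d=1$) by chaining that bound along a sequence of intermediate subspaces. The main obstruction is that Proposition~\ref{lem. distorcao volume 2} only provides a useful ratio estimate when $\theta_x(F_1,F_2)<\frac{1}{d^2-1}$, whereas two arbitrary $d$-dimensional subspaces of the cone $C(x)$ may lie apart at a distance controlled only by the cone width $a$, a fixed constant of the system that may well exceed $\frac{1}{d^2-1}$. The key point that makes everything go through is that the Proposition has a uniform quantitative character, so a single chain of bounded length (independent of $x$ and $n$) will suffice.

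To implement this, I would first exploit that every $d$-dimensional subspace $F'\subset C(x)$ is transverse to $E_x^s$ and hence can be written as the graph of a linear map $L_{F'}:F_x\to E_x^s$ of norm at most $a$. Writing $F_1=\graph(L_1)$ and $F_2=\graph(L_2)$, the affine interpolation $L_s:=(1-s)L_1+sL_2$, $s\in[0,1]$, still satisfies $\|L_s\|\leq a$, so $G_s:=\graph(L_s)$ lies in $C(x)$. A short graph computation of the same flavor as those preceding Lemma~\ref{lem. distorcao volume 1} gives $\theta_x(G_s,G_t)\leq \|L_t-L_s\|\leq 2a\,|t-s|$. Thus I would fix some $\eta\in(0,\frac{1}{d^2-1})$, choose $m\in\mathbb{N}$ depending only on $a$ and $d$ with $2a/m<\eta$, and set $G_i:=G_{i/m}$ for $0\leq i\leq m$, producing a chain $F_1=G_0,G_1,\dots,G_m=F_2$ inside $C(x)$ with consecutive $\theta_x$-distances bounded by $\eta$.

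Applying Proposition~\ref{lem. distorcao volume 2} (or Remark~\ref{rmk:d1} in case $d=1$) to each consecutive pair $(G_i,G_{i+1})$ yields a constant $\rho=\rho(\eta,d,K)\in(0,1)$, independent of $x$, $n$, and of the choice of $F_1,F_2$, such that
\[
(1-\rho)\leq \frac{|\det(Df^n(x)|_{G_{i+1}})|}{|\det(Df^n(x)|_{G_i})|}\leq (1+\rho)\qquad \text{for every } 0\leq i\leq m-1.
\]
Telescoping the ratio $|\det(Df^n(x)|_{F_2})|/|\det(Df^n(x)|_{F_1})|$ through this chain traps it between $(1-\rho)^m$ and $(1+\rho)^m$, which gives the corollary with $K_0:=(1+\rho)^m$. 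The only delicate point is the uniformity of $K_0$, and this is guaranteed because the cone field has a uniform width $a$ on the compact set $\overline U$ (so $m$ does not depend on $x$), while the bound of Proposition~\ref{lem. distorcao volume 2} is already uniform in $n$ thanks to the geometric decay of $\theta_{f^n(x)}(F_1^n,F_2^n)$ supplied by Lemma~\ref{lem. distorcao volume 1}.
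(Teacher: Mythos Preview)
Your proposal is correct and follows essentially the same strategy as the paper: build a finite chain of $d$-dimensional subspaces in $C(x)$ with consecutive $\theta_x$-gaps small enough for Proposition~\ref{lem. distorcao volume 2} to apply, then telescope. The only difference is in how the chain is produced. The paper invokes compactness of the Grassmannian $\mathrm{Grass}_d(\mathbb{R}^{\dim M})$ to cover it by finitely many balls of radius $\tfrac{1}{2(d^2-1)}$ and extracts a chain of length at most $N=N(d)$; your construction via convex interpolation of the graph maps $L_s=(1-s)L_1+sL_2$ is more explicit and has the advantage of making it transparent that all intermediate subspaces stay in the cone (since $\|L_s\|\le a$ by convexity), a point the paper's Grassmannian argument leaves implicit.

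Two minor technicalities to tidy up: first, the bound $\theta_x(G_s,G_t)\le \|L_t-L_s\|$ holds only up to a uniform constant depending on the angle between $E^s_x$ and $F_x$ (since $\theta_x$ measures the graph over $G_s$, not over $F_x$), but this constant is uniform on $\overline U$ and is absorbed into the choice of $m$; second, to cover the lower bound you should take $K_0=(1-\rho)^{-m}$ rather than $(1+\rho)^m$, and you need $\eta$ small enough that $\rho<1$. Neither affects the argument.
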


\begin{proof}
Assumptions (H1)-(H4) ensure that the angle 
between vectors in $E^s$ and vectors contained in the cone field $C(\cdot)$ is uniformly bounded away from zero on the attractor $\Lambda$. 
Hence $\theta_{x}(F_1,F_2)$  
is uniformly bounded, for every $x\in \Lambda$ and any pair of subspaces 
$F_1,F_2$ contained in $C(x)$.  This, together with Proposition~\ref{lem. distorcao volume 2} and Remark~\ref{rmk:d1} 
implies that ~\eqref{eq:dist1} holds with $K_0 = 1+ 2 K \max_{x\in \Lambda} \max_{F_1,F_2 \subset C(x)} \theta_x(F_1,F_2)$
for any subspaces  $F_1,F_2$ of dimension d contained in the cone $C(x)$ so that $\theta_x(F_1,F_2)<\min\{\frac{1}{2(d^2-1)},\frac12\}$. 

Now, let $F_1,F_2$ be arbitrary subspaces of dimension d contained in $C(x)$. Recall that $F_2$ is the graph of a linear map 
$L_{F_2} : F_1 \to E^s_x$. The Grassmanian  
$Grass_d(\mathbb R^{\dim M})$, that is, the set of all $d$-dimensional subspaces in $\mathbb R^{\dim M}$, is compact 
(see e.g. \cite{Sternberg}). Therefore, there exists $N=N(d)\ge 1$ so that $Grass_d(\mathbb R^{\dim M})$ admits a 
finite cover by $N$ balls of radius $\frac{1}{2(d^2-1)}$.  Using that $M$ is compact and that the fiber bundle $TM$ is locally isomorphic to 
$U\times \mathbb R^{\dim M}$ ($U\subset M$) we have the following consequence: there exists $1\le r \le N$ and a family $(E_i)_{1\le i \le r}$ of subspaces of dimension $d$ contained in the cone such that $E_1=F_1$, $E_r=F_2$ and $\theta_x(E_i,E_{i+1})<\frac{1}{2(d^2-1)}$ for every $1\le i \le r-1$.
In particular \eqref{eq:dist1} holds with $K_0=(1+ 2 K \max_{x\in \Lambda} \max_{F_1,F_2 \subset C(x)} \theta_x(F_1,F_2))^N$.
This proves the corollary.
\end{proof}

We will need the following:
 
\begin{corollary}\label{cor:hyp-volume}
Let $\nu$ be an $f$-invariant probability measure supported on $\Lambda$. Then, for $\nu$-almost every $x$ and for {\bf any} subspace  $F_1 \subset C(x)$ of dimension d
$$
\Gamma_\nu(x)=\lim_{\substack{n\to\infty}}  \frac1n \log |\det Df^n(x)\!\mid_{F_1}| 
	= \sum_{\lambda_i(\nu,x)>0} \; \lambda_i(\nu,x),
$$
where $\lambda_1(\nu,x) \le \lambda_2(\nu,x)\le \dots \le \lambda_{d}(\nu,x)$ are the $\nu$-almost everywhere defined 
Lyapunov exponents of $f$ with respect to $\nu$ along the cone direction. 
\end{corollary}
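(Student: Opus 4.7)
The plan is to combine three ingredients: the bounded distortion from Corollary~\ref{cor:subspaces}, the sub-additive ergodic theorem that already underlies the definition of $\Gamma_\nu$ in \eqref{def:Gamma}, and the Oseledets theorem on the natural extension (Proposition~\ref{prop:oseledet_nat_ext}). First, I would use Corollary~\ref{cor:subspaces} to show that the value of the limit $\lim_n \frac{1}{n}\log|\det Df^n(x)|_{F_1}|$, if it exists, is independent of the $d$-dimensional subspace $F_1\subset C(x)$. Indeed, since $K_0^{-1}\le |\det Df^n(x)|_{F_2}|/|\det Df^n(x)|_{F_1}|\le K_0$, dividing the logarithm of this quotient by $n$ yields a vanishing discrepancy. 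Moreover, since $K_0^{-1}\,\Gamma_n(x)\le |\det Df^n(x)|_{F_1}|\le\Gamma_n(x)$, Kingman's theorem applied to the sub-multiplicative cocycle $\Gamma_n$ (already invoked in \eqref{def:Gamma}) implies existence $\nu$-a.e.\ and equality of the limit with $\Gamma_\nu(x)$.

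Second, I would identify this common limit with a sum of Lyapunov exponents by passing to the natural extension. Lift $\nu$ to an $\hat{f}$-invariant probability measure $\hat{\nu}$ on $M^f$ via the bijection discussed in Subsection~\ref{subsec:naturalextension}, and apply Proposition~\ref{prop:oseledet_nat_ext} to obtain, for $\hat{\nu}$-a.e.\ $\hat{x}$, an Oseledets splitting $T_{\hat{x}}M^f=\bigoplus_{i=1}^{r(\hat{x})}E_i(\hat{x})$ with exponents $\lambda_i(\hat{x})$. Hypothesis (H2) forces all Oseledets subspaces inside $E^s$ to have exponent $\le\log\lambda<0$, and the domination (H4) forces the complementary Oseledets subspaces, whose direct sum I denote $E^c(\hat{x})$, to be exactly $d$-dimensional and carry all exponents strictly larger than the stable ones. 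Cone invariance (H3), combined with $E^c(\hat{x})=\bigcap_{n\ge0}Df^n(x_{-n})\,C(x_{-n})$, places $E^c(\hat{x})\subset C(\pi\hat{x})$, so it is a legitimate choice of $F_1$.

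Third, choosing $F_1=E^c(\hat{x})$ and applying Oseledets item (6) (asymptotic transversality between invariant Oseledets subspaces) gives
\[
\lim_{n\to\infty}\frac{1}{n}\log|\det Df^n(x)|_{E^c(\hat{x})}|
=\sum_{E_i(\hat{x})\subset E^c(\hat{x})}m_i(\hat{x})\,\lambda_i(\hat{x})
=\sum_{i=1}^{d}\lambda_i(\nu,x),
\]
the sum of all Lyapunov exponents along the cone direction. Since the SRB measures of Theorem~\ref{thm:TEO1} are hyperbolic and the corollary is used in that context, the cone exponents are all positive, so this sum coincides with $\sum_{\lambda_i(\nu,x)>0}\lambda_i(\nu,x)$; for non-ergodic $\nu$ the same argument goes through on ergodic components. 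Combined with Step~1, the identity extends from the special choice $F_1=E^c(\hat{x})$ to any $d$-dimensional subspace $F_1\subset C(x)$.

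The delicate point I expect will be justifying that $E^c(\hat{x})$ is actually contained in $C(\pi\hat{x})$ rather than merely transverse to $E^s_{\pi\hat{x}}$, since Oseledets alone only produces transversality. This is where forward cone invariance together with the pre-orbit structure on $M^f$ is essential: $E^c(\hat{x})$, being the $Df$-invariant unstable direction at $\hat{x}$, is forced into every forward image of the cone over its past orbit, hence into $C(\pi\hat{x})$. Once this containment is in hand, the Oseledets identification of the determinant as a sum over the chosen Oseledets subspaces is standard, and Step~1 handles the transfer to arbitrary $F_1$ without further computation.
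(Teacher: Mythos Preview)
Your proposal is correct and follows essentially the same approach as the paper: both lift $\nu$ to the natural extension, invoke Oseledets (Proposition~\ref{prop:oseledet_nat_ext}) together with the $Df$-invariance of $E^s$ and the cone field to produce a $D\hat f$-invariant $d$-dimensional complement $\hat F_{\pi(\hat x)}$ (your $E^c(\hat x)$) contained in $C(\pi(\hat x))$, and then use the bounded distortion of Corollary~\ref{cor:subspaces} together with Kingman's theorem to pass from this special subspace to an arbitrary $F_1\subset C(x)$. Your remark that the argument actually produces the sum of \emph{all} cone-direction exponents, and that identifying this with $\sum_{\lambda_i>0}\lambda_i$ requires the hyperbolicity available in the SRB setting, is a point the paper's proof leaves implicit.
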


\begin{proof}
Let $\nu$ be $f$-invariant probability measure supported on $\Lambda$ and let $\hat\nu$ be the unique $\hat f$-invariant probability
measure such that $\pi_*\hat\nu=\nu$. 
By Oseledets theorem there exists a almost everywhere defined
$D\hat f$-invariant splitting $T_{\pi(\hat{x})}M =E_{1}(\hat{x})\oplus\dots\oplus E_{r(\hat{x})}(\hat{x})$ so that  
$\lim_{n\to\pm\infty}\frac{1}{n}\log\|D\hat{f}^{n}(\hat{x}) u\|=\lambda_{i}(\hat{x})$
for every $u\in E_{i}(\hat{x})\backslash\left\{ 0\right\} $
and for every $1\leq i\leq r(\hat{x})$ (cf. Proposition~\ref{prop:existsrb})). 
However, by partially hyperbolicity (recall assumptions (H1) and (H3)) $E^s$ is a $Df$-invariant subbundle 
and $C(\cdot)$ is a $Df$-invariant cone field.
This fact, together with the latter, ensures that for $\hat\nu$-almost every $\hat x$ there exists 
$1<s(\hat x) <r(\hat x)$ so that
$$
E^s_{\pi(\hat x)}=E_{1}(\hat{x}) \oplus \dots \oplus E_{s(\hat{x})}(\hat{x})
\;\; \text{and} \;\;
T_{\pi(\hat{x})}M = E^s_{\pi(\hat x)} \oplus E_{s(\hat{x})+1}(\hat{x}) \oplus\dots\oplus E_{r(\hat{x})}(\hat{x}),
$$
and the $D\hat f$-invariant subspace $\hat F_{\pi(\hat x)}:=E_{s(\hat{x})+1}(\hat{x}) \oplus\dots\oplus E_{r(\hat{x})}(\hat{x})$
is contained in the cone $C(\pi(\hat x))$.

Now, fix an arbitrary subspace $F_1 \subset C(x)$ of dimension d.
By definition of $\Gamma_n$ (recall~\eqref{def:Gamma}) together with Corollary~\ref{cor:subspaces} it follows that
\begin{equation}\label{Gammaprox}
|\det Df^n(x)\!\mid_{F_1}| \le \Gamma_n(x):=\max_{\tilde F \subset C(x)} |\det Df^n(x)\!\mid_{\tilde F}|
	\le K_0 |\det Df^n(x)\!\mid_{F_1}|.
\end{equation}
Thus, the limit $\lim_{i\to\infty}  \frac1{n_i} \log \det |Df^{n_i}(x)\!\mid_{F_1}|$, taken along the sequence $(n_i)_i$ of $c$-cone-hyperbolic times for $x$, exists and coincides with $\Gamma_\nu(x)$
for $\nu$-a.e. $x$.
Moreover, using \eqref{Gammaprox} once more, the fact that $(\Gamma_n)_n$ is sub-multiplicative and 
Kingman's subadditive ergodic theorem then $\Gamma_\nu(x)=\lim_{\substack{n\to\infty}}  \frac1n \log |\det Df^n(x)\!\mid_{F_1}|$.
Finally, the second equality in the corollary follows from the previous argument 
and the fact that if $x=\pi(\hat x)$ 
then
$
K_0^{-1} |\det Df^n(x)\!\mid_{F_1}| \le |\det Df^n(x)\!\mid_{\hat F_{x}}|
	\le K_0 |\det Df^n(x)\!\mid_{F_1}|.
$
This proves the corollary.
\end{proof}

\begin{remark}\label{rmk:open}
Given $c>0$ and $\delta=\delta(c)>0$ as in Proposition~\ref{prop:contraidist}, there exists $\delta_1=\delta_1(c) \in (0,\delta)$
so that if $n$ is a $c$-cone-hyperbolic time for $x$ then
$n$ is a $\frac{c}2$-cone-hyperbolic time for every $y\in B(x,n,\delta_1)$
 (see e.g. Lemma 5.3 and Proposition 5.7 in \cite{CV17}).
\end{remark}

Let $\Leb$ denote the Lebesgue measure on the compact Riemannian manifold $M$. The next proposition ensures that the Lebesgue 
measure satisfies a volume lemma (at cone-hyperbolic times) expressed in terms of the Jacobian along of any subspace of full dimension in
 the cone field. More precisely:

\begin{proposition}\label{le:volume1}
 Let $\delta=\delta(\frac{c}2)>0$ be given by Proposition~\ref{prop:contraidist}.
Given $0<\vep<\delta$ there exists $K(\vep)>0$ so that if $n$ is a $c$-cone-hyperbolic time for $x$ and $F_1$ is {\bf any} 
subspace of dimension $d$ contained in $C(x)$ then
\begin{equation}\label{eq:vol1}
K(\vep)^{-1} e^{-\log |\det Df^n(x)\mid_{F_1}|} \le \Leb(B(x,n,\vep)) \le K(\vep) e^{-\log |\det Df^n(x)\mid_{F_1}|}.
\end{equation}
\end{proposition}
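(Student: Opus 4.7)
The strategy is to exhibit $B(x,n,\vep)$ as a ``tube'' obtained by sweeping a family of $d$-dimensional hyperbolic pre-disks tangent to the cone field along a small ball in the stable direction, and then to estimate its volume by Fubini together with the three distortion ingredients already at hand: Proposition~\ref{prop:contraidist} (exponential backward contraction at cone-hyperbolic times), Proposition~\ref{prop.dist. volume ABV} (bounded distortion along such pre-disks), and Corollary~\ref{cor:subspaces} (comparability of $d$-dimensional Jacobians along any two subspaces inside the cone). The dimension-counting factor $\vep^{d^{*}}$ produced by this procedure will be absorbed into $K(\vep)$.

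I would first build a family of ``fake leaves'' in the exponential chart at $x$: for each $v_{2}\in E^{s}_{x}$ with $\|v_{2}\|\le\vep$, set $y_{v_{2}}:=\exp_{x}(v_{2})$ and let $D_{v_{2}}$ be the parallel translate of the flat disk $\exp_{x}(F_{1}\cap T_{x}M(\delta_{0}))$ through $y_{v_{2}}$. By continuity of the cone field, the domination hypothesis (H4), and the compactness of $M$, for $\vep$ small enough each $D_{v_{2}}$ is a $C^{1}$ disk tangent to $C(\cdot)$ everywhere, with $T_{y_{v_{2}}}D_{v_{2}}=F_{1}$ (under the canonical identification of tangent spaces near $x$), and the family is uniformly transverse to $E^{s}_{x}$. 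By Remark~\ref{rmk:open}, if $\vep<\delta_{1}$ then $n$ is a $(c/2)$-cone-hyperbolic time for every $y_{v_{2}}$, so Proposition~\ref{prop:contraidist} applied inside $D_{v_{2}}$ produces a hyperbolic pre-disk $D_{v_{2}}(y_{v_{2}},n,\vep)$ mapped diffeomorphically by $f^{n}$ onto $\Delta(f^{n}(y_{v_{2}}),\vep)$, with backward contraction $e^{-ck/4}$.

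The main geometric claim will be the product-structure sandwich
\[
T(x,n,c_{1}\vep)\ \subseteq\ B(x,n,\vep)\ \subseteq\ T(x,n,c_{2}\vep),\qquad T(x,n,r):=\!\!\bigcup_{\|v_{2}\|\le r}\!\!D_{v_{2}}(y_{v_{2}},n,r),
\]
for constants $c_{1},c_{2}>0$ depending only on $f$ and the curvature of $M$. The first inclusion is a direct triangle-inequality argument combining the pre-disk backward contraction for the leaf component and the stable contraction $\lambda^{j}\|v_{2}\|$ for the $v_{2}$-component. The reverse inclusion is the hard part and the main obstacle: given $y\in B(x,n,\vep)$, one must use the uniform transversality of the fake leaves to $E^{s}$ to decompose $y=\exp_{y_{v_{2}}}(w)$ with $w$ lying on the leaf $D_{v_{2}}$, and then exploit that $f^{n}$ is a diffeomorphism from the pre-disk of radius $c_{2}\vep$ onto $\Delta(f^{n}(y_{v_{2}}),c_{2}\vep)$ to force $w$ to lie inside that pre-disk. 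The delicate point is that the cone field is only \emph{positively} invariant, so the ``horizontal'' fibers at $f^{j}(y)$ are not literal translates of $F_{1}$ and must be controlled via (H4) together with the cone-expansion estimate~\eqref{eq ineq times}.

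With the sandwich in hand, Fubini and the uniform transversality of the leaves yield
\[
\Leb\bigl(T(x,n,\vep)\bigr)\ \asymp\ \vep^{d^{*}-d}\sup_{\|v_{2}\|\le\vep}\Leb_{D_{v_{2}}}\!\bigl(D_{v_{2}}(y_{v_{2}},n,\vep)\bigr),
\]
and Proposition~\ref{prop.dist. volume ABV} turns the right-hand side into $\vep^{d^{*}}/|\det Df^{n}(y_{v_{2}})\!\mid_{T_{y_{v_{2}}}D_{v_{2}}}|$. Corollary~\ref{cor:subspaces} at the point $y_{v_{2}}$ allows one to replace $T_{y_{v_{2}}}D_{v_{2}}$ by any $d$-subspace of $C(y_{v_{2}})$, and the summable estimate $\sum_{j\ge 0}\lambda^{j\alpha}<\infty$ combined with the H\"older control of Proposition~\ref{prop:iteratesoftgbundleholder} permits a comparison of the Jacobian at $y_{v_{2}}$ with the one at $x$ along $F_{1}$, giving $|\det Df^{n}(y_{v_{2}})\!\mid_{T_{y_{v_{2}}}D_{v_{2}}}|\asymp|\det Df^{n}(x)\!\mid_{F_{1}}|$ with constants depending only on $\vep$. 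Absorbing $\vep^{d^{*}}$ into $K(\vep)$ then delivers~\eqref{eq:vol1}.
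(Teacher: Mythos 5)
Your proposal follows essentially the same route as the paper's proof: a foliation of $B(x,n,\vep)$ by parallel-transported disks tangent to the cone field over the stable direction, Fubini, the bounded distortion of Proposition~\ref{prop.dist. volume ABV} combined with the uniform size of the $f^n$-images of the leaf pre-disks, Corollary~\ref{cor:subspaces} to pass to an arbitrary $d$-subspace of $C(x)$, and a H\"older-plus-stable-contraction comparison of the Jacobians at the leaf base points with the one at $x$. Your tube sandwich $T(x,n,c_1\vep)\subseteq B(x,n,\vep)\subseteq T(x,n,c_2\vep)$ is just the global packaging of the leafwise inclusions $D\cap B(x,n,\vep/2)\subset D(x,n,\vep)\subset D\cap B(x,n,\vep)$ that the paper uses, so the two arguments coincide in substance.
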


\begin{proof}
Given $c>0$ let $0<\delta_1<\delta=\delta(\frac{c}2)$ be given by Proposition~\ref{prop:contraidist} and Remark~\ref{rmk:open}.
Assume that $n$ is a $c$-cone-hyperbolic time for $x$ and take $0<\varepsilon\ll \delta_1$. 
By Remark~\ref{rmk:open}, $n$ is a $\frac{c}2$-cone hyperbolic time
for every $y\in B(x,n,\delta_1)$ and, consequently, if $ D$ is a $C^1$-disk tangent to $C(x)$ passing through $y$ 
and $d(y,\partial  D)>\vep$  then there is an open neighborhood $ D(y,n,\vep)$ of $y$ in $ D$ diffeomorphic 
to a $C^1$-disk $ \Delta(f^{n}(y),\vep)$ by $f^{n}$ and such that 
$$
dist_{f^{n-k}( D(y,n,\vep))}(f^{n-k}(z),f^{n-k}(y))\leq e^{-\frac{c}{2}k}dist_{ \Delta(f^{n}(y),\vep)}(f^{n}(z),f^{n}(y)),
$$
for every $z\in  D(y,n,\vep)$ and $1\leq k\leq n.$
Since the curvature of disks tangent to $C(x)$ is bounded, we may reduce $\delta_1$ is necessary to get that $dist_{M} \le dist_{\tilde D} \le 2dist_M$ for every $C^1$-disk $\tilde D$ of diameter $\delta_1$ tangent to the cone field.

Let $D$ be a  $C^1$-disk of diameter $\delta_1$ tangent to $C(x)$ passing through $x$ and let $D(x,n,\vep)$ be as above.  
Then it is not hard to check that 
$$
D \cap B(x,n,\frac\vep2) 
	\subset D(x,n,\vep) 
	\subset D\cap B(x,n,\vep). 
$$
Moreover, as the local stable manifolds are exponentially contracted by iteration of the dynamics, we have that
$W^s_{loc}(x) \cap B(x,\vep) = W^s_{loc}(x) \cap B(x,n,\vep)$. 
Set $W:=W^s_{loc}(x) \cap B(x,\vep)$,
consider $\hat W=\exp_x^{-1}(W)$,
$\hat D_x:=\exp_x^{-1}(D)$,  and for every $y\in \hat W$ let $\hat D_y$ be the $C^1$-submanifold passing through $y$ and 
obtained by parallel transport from $\hat D_x$ (see Figure~2). 
\begin{figure}[htb]
\begin{center}
        \includegraphics[scale = 0.35]{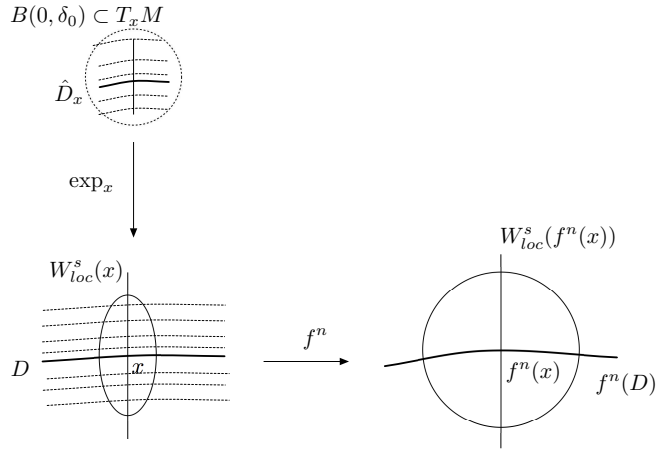}
        \caption{Construction of a foliation of the dynamic ball by disks tangent to the cone field}
\end{center}
\end{figure}
In particular $(\exp_x(\hat D_y))_{y\in \hat W}$ is a foliation by $C^1$-submanifolds $(\gamma_y)_{y\in W}$ tangent to the cone field $C$ that cover $B(x,n,\vep)$. 
By Fubini-Tonelli Theorem, we have that
$
\Leb(B(x,n,\vep))=\int_{W} \Leb_{{\gamma_y}}(B(x,n,\vep)\cap \gamma_y) \;dy.
$ 
Using change of variables and Proposition~\ref{prop.dist. volume ABV}, we conclude that, for every $y\in W$,
	\begin{eqnarray*}
	\Leb(f^n(B(x,n,\vep)\cap \gamma_y))
	&=& \int_{B(x,n,\vep)\cap \, \gamma_y}  |\det Df^n(z) \mid_{T_z \gamma_y}| \; d\Leb_{{\gamma_y}}(z) \label{eq:dynball1d0}\\
	&\le& C_1 \Leb_{{\gamma_y}}(B(x,n,\vep)\cap \gamma_y) \; |\det Df^n(y) \mid_{T_y \gamma_y}| \nonumber
	\end{eqnarray*}
(the lower bound is analogous).
Since $n$ is a $c$-cone-hyperbolic time then $f^n(B(x,n,\vep)\cap \gamma_y)$ contains 
a disk of uniform radius $\vep$ in $\gamma_y$, and the left hand side above is bounded away from zero by some constant $C_2(\vep)>0$. By Corollary~\ref{cor:subspaces}, 
\begin{align}
 \Leb_{{\gamma_y}}(B(x,n,\vep)\cap \gamma_y) 
 	& \ge C_1^{-1} C_2(\vep) |\det Df^n(y) \mid_{T_y \gamma_y}|^{-1} \nonumber\\
	&  \ge K_0^{-1} C_1^{-1} C_2(\vep)  |\det Df^n(y) \mid_{F_1}|^{-1}  \label{eq:dynball1d}
\end{align}
for all $y\in W$ (by some abuse of notation $F_1$ stands for the subspace of $T_y M$ obtained from $F_1\subset T_x M$ by parallel transport). 
Since $\Leb(f^n(B(x,n,\vep)\cap \gamma_y))$ is bounded above by a uniform constant $C_3(\vep)>0$ a similar reasoning implies that
\begin{align}
 \Leb_{{\gamma_y}}(B(x,n,\vep)\cap \gamma_y) 
	&  \le K_0 C_1 C_3(\vep)  |\det Df^n(y) \mid_{F_1}|^{-1}  \label{eq:dynball1d2},
\end{align}
 Now, since the distance between any pair of points in $W$ is exponentially contracted by forward iteration 
and $W\ni y\mapsto |\det Df(y) \mid_{F_1}|$ is H\"older continuous then 
there exists $K_1>0$ so that $K_1^{-1}|\det Df^n(y) \mid_{F_1}| \le |\det Df^n(x) \mid_{F_1}| \le K_1 |\det Df^n(y) \mid_{F_1}|$
for any $x,y\in W$ and $n\ge 1$. Therefore
$$
\Leb(B(x,n,\vep)) = \int  \Leb_{{\gamma_y}}(B(x,n,\vep)\cap \gamma_y) \;dy
	 \ge K(\vep)^{-1} e^{-\log|\det Df^n(x) \mid_{F_1}|},
$$
for every $n\ge1$, where $K(\vep)=K_0 K_1 C_1 \max\{C_2(\vep)^{-1},C_3(\vep)\}$. Since the proof of the upper bound is entirely analogous using
\eqref{eq:dynball1d2} instead of \eqref{eq:dynball1d}, this completes the proof of the proposition.
\end{proof}

%%%%%%%%%%%%%%%%%%%%%%%%%
\section{A second volume lemma}\label{sec:volume2}

%%%%%%%%%%%%%%%%%%%%%%%%%
\subsection{The construction of the SRB measures}\label{sec:SRB}

We first recall some of the arguments in the construction of the SRB measures, referring the reader
to \cite{CV17} for full details and proofs.
Fix $\delta>0$ given by Proposition \ref{prop:contraidist} and
$H=H(c) \subset U$ be the set of points with infinitely many $c$-cone hyperbolic times
and such that $Leb_{D}(H)>0$, guaranteed by hypothesis (H3). Reducing $\delta$, if necessary,
we can assume that 
$
B:=B(D,c,\delta):=\left\{ x\in D\cap H:\ dist_{D}(x,\partial D)\geq\delta\right\} 
$
satisfies $Leb_{D}(B)>0$. For each $n\in\mathbb{N}$ define
$H_{n}:=\left\{ x\in B:\ n\mbox{ is a }c\mbox{-cone hyperbolic time for }x\right\} .$
By \cite[Section 5]{CV17}, 
there exists $\tau>0$ and for each $n\in\mathbb{N}$
there is a finite set $H_{n}^{*} \subset H_{n}$ such that the hyperbolic
pre-disks $(D(x,n,\delta))_{x\in H_{n}^{*}}$ are contained in $D$, pairwise disjoint and their union 
$
\mathscr{D}_{n}:=\bigcup_{x\in H_{n}^{*}}D(x,n,\delta)
$
 satisfies $\Leb_{D}(\mathscr{D}_{n}\cap H_{n})\geq\tau Leb_{D}(H_{n})$.
The sequence of probability measures $(\nu_n)_n$ given by ~\eqref{eq:nunD}
have non-trivial accumulation points $\nu$ in the weak$^*$ topology.
Each measure $\nu_{n}$ (recall ~\eqref{eq:nunD}) is supported on the union of disks 
$\cup_{j=0}^{n-1}\Delta_{j}$, with $\Delta_{j}:=\cup_{x\in H_{j}^{*}} \Delta(f^{j}x,\delta)$ 
and the support of any accumulation point  $\nu$ of $(\nu_{n})_{n\in\mathbb{N}}$ is contained 
in a union of unstable manifolds of uniform size.

\begin{proposition} \cite{CV17}
\label{lem:suppdisk} Given $y\in\supp(\nu)$ exist $\hat{x}=(x_{-n})_{n\in\mathbb{N}}\in M^{f}$
such that $y$ belongs to a disk $\Delta(\hat{x})$ of
radius $\delta>0$ accumulated by $n_{j}$-hyperbolic disks $\Delta(f^{n_{j}}(x),\delta)$
with $j\to\infty$, and satisfying that
\begin{enumerate}
\item the inverse branch $f_{x_{-n}}^{-n}$ is well defined in $\Delta(\hat{x})$
for every $n\geq0$;
\item if for each $y\in\Delta(\hat{x})$ one takes $y_{-n}:=f_{x_{-n}}^{-n}(y)$ then
$
dist_{M}(y_{-n},x_{-n})\leq e^{-\frac{c}{2}\cdot n}\delta\mbox{ for every }n\geq0.
$
\end{enumerate}
\end{proposition}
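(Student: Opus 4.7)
The plan is to build the pre-orbit $\hat x$ and the limit disk $\Delta(\hat x)$ via a diagonal subsequence argument, exploiting the uniform geometry of the hyperbolic disks from Subsection~\ref{subsec:diskgeometry} together with the exponential backward contraction furnished by Proposition~\ref{prop:contraidist}. Since $y\in\supp(\nu)$ and $\nu$ is a weak$^*$-accumulation point of $\nu_n=\tfrac{1}{n}\sum_{j=0}^{n-1} f^{j}_{*}\Leb_{\mathcal{D}_j}$, every neighborhood of $y$ meets the union of image disks $f^{j}(\mathcal{D}_j)$ for arbitrarily large $j$. This will let me extract a sequence $n_j\to\infty$ and base points $x_j\in H^{*}_{n_j}$ so that the $n_j$-hyperbolic disk $\Delta(f^{n_j}(x_j),\delta)$ enters every neighborhood of $y$; then I choose $y_j\in\Delta(f^{n_j}(x_j),\delta)$ with $y_j\to y$.

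Next, since every $\Delta(f^{n_j}(x_j),\delta)$ is tangent to $C$ and has uniformly H\"older tangent bundle by Proposition~\ref{prop:iteratesoftgbundleholder}, an Arzel\`a--Ascoli argument applied to their graph parametrizations over the cone yields, along a subsequence, a $C^1$-limit disk $\Delta(\hat x)$ of radius $\delta$ tangent to $C$ and containing $y$. Using compactness of $M$ and a further diagonal extraction I may then assume $f^{n_j-k}(x_j)\to x_{-k}$ for every $k\ge 0$; continuity of $f$ forces $f(x_{-k})=x_{-k+1}$, so $\hat x=(x_{-k})_{k\ge0}\in M^f$ and $x_0\in\Delta(\hat x)$.

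For item~(1), observe that for each $j$ the map $f^{n_j}$ is a diffeomorphism from the pre-disk $D(x_j,n_j,\delta)$ onto $\Delta(f^{n_j}(x_j),\delta)$, so for any $z\in\Delta(f^{n_j}(x_j),\delta)$ the points $z_{-k,j}:=f^{n_j-k}\bigl((f^{n_j})^{-1}(z)\bigr)$ are well defined and Proposition~\ref{prop:contraidist} gives
\begin{equation*}
dist_{M}(z_{-k,j},\,f^{n_j-k}(x_j))\;\le\; e^{-\frac{c}{2}k}\,\delta
\quad\text{for every }0\le k\le n_j,
\end{equation*}
showing in particular that the $k$-step inverse-branch map $z\mapsto z_{-k,j}$ is uniformly Lipschitz with constant $e^{-ck/2}$. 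A second Arzel\`a--Ascoli / diagonal argument thus extracts, for every $k\ge0$, a Lipschitz limit map $z\mapsto z_{-k}$ defined on the whole of $\Delta(\hat x)$ with $f^k(z_{-k})=z$ and $dist_{M}(z_{-k},x_{-k})\le e^{-ck/2}\delta$. Local injectivity of the non-singular endomorphism $f$ identifies this map with the inverse branch $f^{-k}_{x_{-k}}$, proving~(1); specializing to $z=y$ in the contraction bound delivers~(2).

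The hard part will be ensuring that the inverse branches selected at each approximating disk (one per base point $x_j$) fit together in the limit into a single coherent pre-orbit $\hat x$ and a single inverse branch of $f^{k}$ defined on all of $\Delta(\hat x)$: a priori nothing prevents neighboring branches of $f^{-k}$ from ``jumping" between different sheets. The uniform exponential contraction rate $e^{-ck/2}$ from Proposition~\ref{prop:contraidist} is precisely what rules this out, forcing all candidate pre-images to cluster in a ball of controlled radius around $x_{-k}$; combined with local injectivity of $f$ and the $C^1$-convergence of the approximating disks, this is exactly what makes the diagonal / Arzel\`a--Ascoli extraction produce a well-defined continuous inverse-branch map on the full limit disk.
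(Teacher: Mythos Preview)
The paper does not supply its own proof of this proposition: it is quoted verbatim from \cite{CV17} and used as a black box in Section~\ref{sec:SRB}. So there is no in-paper argument to compare against. Your sketch is the natural compactness argument and is essentially correct: extract hyperbolic disks approaching $y$ from the support characterization of $\nu$, use the uniform $C^{1,\alpha}$ bounds from Proposition~\ref{prop:iteratesoftgbundleholder} and Arzel\`a--Ascoli to get a $C^1$ limit disk, run a diagonal extraction on the centers $f^{n_j-k}(x_j)$ to produce the pre-orbit $\hat x$, and pass the backward-contraction estimate of Proposition~\ref{prop:contraidist} to the limit to obtain both the well-definedness of the inverse branches on the whole disk and the bound in item~(2).

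Two small points worth tightening. First, when you argue that the index $j$ in $f^j_*\Leb_{\mathcal D_j}$ can be taken to infinity, make explicit that the total mass of each $f^j_*\Leb_{\mathcal D_j}$ is uniformly bounded, so finitely many $j$'s can only contribute $O(1/n)$ to $\nu_n(U)$; this is what forces $n_j\to\infty$. Second, for item~(1) the phrase ``local injectivity identifies this map with the inverse branch $f^{-k}_{x_{-k}}$'' is correct but deserves one more sentence: your limit map $\varphi_k$ is a \emph{continuous} section of the local diffeomorphism $f^k$ on the connected disk $\Delta(\hat x)$ with $\varphi_k(x_0)=x_{-k}$, and any two such continuous sections that agree at one point agree everywhere; this is exactly what it means for the inverse branch $f^{-k}_{x_{-k}}$ to be well defined on all of $\Delta(\hat x)$.
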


By Proposition \ref{lem:suppdisk}, given $y\in\supp(\nu)$ there
is a disk $\Delta(\hat{x})$ which is contractive along
the pre-orbit $\hat{x}\in M^{f}$. 
We proceed to prove $\hat{\nu}$ is supported on a union of unstable manifolds. 
For that, if $\Delta(\hat{x})$ ($\hat{x}\in\hat{H}_{\infty}$) is a disk given by Proposition \ref{lem:suppdisk} 
then there exists a sequence of points
$(z_{j_{k}})_{k\in\mathbb{N}}$ satisfying  $z_{j_{k}}\in H_{j_{k}}^{*}$
for every $k\in\mathbb{N}$ 
such that $\lim_{k\to\infty}\Delta(f^{j_{k}}(z_{j_{k}}),\delta)=\Delta(\hat{x})$.
Since $\lim_{k\to\infty}f^{j_{k}-n}(z_{j_{k}})=x_{-n}$ for every $n\in\mathbb{N}$ (where the limit is taken over $j_{k}\geq n$)
then it is natural to define:
\begin{align}
\hat{\Delta}(\hat{x}) & :=\left\{ \hat{y}\in M^{f}:\ \hat{y}=\lim_{s\to\infty}\hat{y}_{j_{k_{s}}},\text{ for some subsequence }(\hat y_{j_{k_{s}}})_{s\in\mathbb{N}}\right. \nonumber \\ 
	& \left.\text{where }\hat{y}_{j_{k_{s}}}\in\hat{f}^{j_{k_{s}}}(\pi^{-1}(D(z_{j_{k_{s}}},j_{k_{s}},\delta)))\text{ and }(z_{j_{k_{s}}})_{\!s\in\mathbb{N}}\!\subset(z_{j_{k}})_{\!k\in\mathbb{N}}\right\} .\label{eq:def_lift_deltax}
\end{align}
Roughly, the set $\hat{\Delta}(\hat{x})$ is formed by all points obtained as accumulation points of a sequence $(\hat y_{j_{k}})_{k\in\mathbb{N}}$ where $\hat y_{j_{k}}\in\hat{f}^{j_{k}}(\pi^{-1}(D(z_{j_{k}},j_{k},\delta)))$ for every $k\ge 1$.
The support of the lift $\hat\nu$ of $\nu$ to $M^f$ is contained in the union of this family of sets. 
More precisely:

\begin{lemma}
Given $\hat{y}\in supp(\hat{\nu})$, there exists $\hat{x}\in M^{f}$
and $\hat{\Delta}(\hat{x})\subset M^{f}$ (given by  \eqref{eq:def_lift_deltax}) 
such that $\hat{y}\in\hat{\Delta}(\hat{x})$ and $\pi|_{\hat{\Delta}(\hat{x})}:\hat{\Delta}(\hat{x})\rightarrow\Delta(\hat{x})$
is a continuous bijection. \label{prop:bijectiveprojectionondisks}
\end{lemma}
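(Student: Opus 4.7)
The plan is to identify $\hat{\Delta}(\hat{x})$ with the graph of the canonical pre-orbit assignment $y_0\mapsto(f_{x_{-n}}^{-n}(y_0))_{n\geq 0}$ provided by Proposition~\ref{lem:suppdisk}(1), then exploit the exponential backward contraction of Proposition~\ref{prop:contraidist} to establish both surjectivity onto $\Delta(\hat{x})$ and injectivity.

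Given $\hat{y}\in\supp(\hat{\nu})$, let $y:=\pi(\hat{y})\in\supp(\nu)$ and apply Proposition~\ref{lem:suppdisk} to obtain $\hat{x}\in M^f$, the disk $\Delta(\hat{x})$ containing $y$, and a sequence of $j_k$-hyperbolic disks $\Delta(f^{j_k}(z_{j_k}),\delta)\to\Delta(\hat{x})$ with $z_{j_k}\in H_{j_k}^*$. Each $\nu_{j_k}$ admits a natural lift to $M^f$ supported in $\bigcup_{i<j_k}\hat{f}^i(\pi^{-1}(\mathcal{D}_i))$, and any weak-$^*$ accumulation point of these lifts projects under $\pi_*$ to $\nu$ and has support contained in $\bigcup_{\hat{x}}\hat{\Delta}(\hat{x})$; extracting a subsequence $\hat{y}_{j_{k_s}}\in\hat{f}^{j_{k_s}}(\pi^{-1}(D(z_{j_{k_s}},j_{k_s},\delta)))$ converging to $\hat{y}$ shows $\hat{y}\in\hat{\Delta}(\hat{x})$. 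Continuity of $\pi|_{\hat{\Delta}(\hat{x})}$ is automatic.

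For surjectivity, given $y_0\in\Delta(\hat{x})$ set $y_{-n}:=f_{x_{-n}}^{-n}(y_0)$ (well-defined by Proposition~\ref{lem:suppdisk}(1)) and $\hat{y}_0:=(y_{-n})_{n\in\mathbb{N}}\in M^f$, so $\pi(\hat{y}_0)=y_0$. Select $y_{0,s}\in\Delta(f^{j_{k_s}}(z_{j_{k_s}}),\delta)$ with $y_{0,s}\to y_0$, pull back via the diffeomorphism $f^{j_{k_s}}|_{D(z_{j_{k_s}},j_{k_s},\delta)}$ to $w_{0,s}\in D(z_{j_{k_s}},j_{k_s},\delta)$, take any $\hat{w}_{0,s}\in M^f$ with $\pi(\hat{w}_{0,s})=w_{0,s}$, and set $\hat{y}_{0,s}:=\hat{f}^{j_{k_s}}(\hat{w}_{0,s})$. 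Pass to a subsequential limit $\hat{y}^*$ in the compact space $M^f$. For each fixed $n$ Proposition~\ref{prop:contraidist} gives $d(f^{j_{k_s}-n}(w_{0,s}),f^{j_{k_s}-n}(z_{j_{k_s}}))\leq e^{-cn/2}\delta$, and since $f^{j_{k_s}-n}(z_{j_{k_s}})\to x_{-n}$ by construction of $\hat{x}$, the $-n$ coordinate $y^*_{-n}$ of $\hat{y}^*$ lies in the neighborhood of $x_{-n}$ on which $f^n$ is univalent with inverse branch $f_{x_{-n}}^{-n}$. Since $f^n(y^*_{-n})=y_0$, we get $y^*_{-n}=f_{x_{-n}}^{-n}(y_0)=y_{-n}$ for every $n$, so $\hat{y}^*=\hat{y}_0$; as every subsequential limit equals $\hat{y}_0$ the full sequence converges and $\hat{y}_0\in\hat{\Delta}(\hat{x})$.

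Injectivity follows by the same argument: if $\hat{y},\hat{y}'\in\hat{\Delta}(\hat{x})$ share the projection $y_0$, the defining approximating sequences together with Proposition~\ref{prop:contraidist} force the coordinates $y_{-n},y'_{-n}$ to lie in the domain of the univalent inverse branch $f_{x_{-n}}^{-n}$ and both to map to $y_0$, forcing $y_{-n}=y'_{-n}=f_{x_{-n}}^{-n}(y_0)$. The principal technical hurdle is the convergence $\hat{y}_{0,s}\to\hat{y}_0$ in the natural extension metric $\hat{d}$, which requires exchanging the limits $s\to\infty$ and $n\to\infty$ in the series $\hat{d}=\sum_n 2^{-n}d(\cdot,\cdot)$; the uniform exponential bound $e^{-cn/2}\delta$ from Proposition~\ref{prop:contraidist} makes the tails uniformly small in $s$ and legitimizes the exchange.
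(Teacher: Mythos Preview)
The paper does not supply its own proof of this lemma; it is one of the facts recalled from \cite{CV17} in the summary of Subsection~\ref{sec:SRB}. Your arguments for surjectivity and injectivity of $\pi|_{\hat{\Delta}(\hat{x})}$ are sound and follow the natural route: the uniform backward contraction of Proposition~\ref{prop:contraidist} forces the $(-n)$-th coordinate of any element of $\hat{\Delta}(\hat{x})$ to lie within $e^{-cn/2}\delta$ of $x_{-n}$, inside the domain on which the inverse branch $f_{x_{-n}}^{-n}$ is univalent, and this pins down every coordinate as $f_{x_{-n}}^{-n}(y_0)$.

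There is, however, a genuine gap in your first step, where you assert $\hat{y}\in\hat{\Delta}(\hat{x})$ for the $\hat{x}$ produced by applying Proposition~\ref{lem:suppdisk} to $y=\pi(\hat{y})$. The approximants to $\hat{y}$ coming from weak$^*$ convergence of the lifted measures lie over \emph{some} hyperbolic pre-disks in $\bigcup_i\hat{f}^i(\pi^{-1}(\mathcal D_i))$, not necessarily over the specific family $(D(z_{j_k},j_k,\delta))_k$ that Proposition~\ref{lem:suppdisk} singles out from the projection $y$ alone. For endomorphisms several disks $\Delta(\hat{x}')$ may pass through the same point $y$ (local unstable manifolds can self-intersect), and by your own injectivity argument each $\hat{\Delta}(\hat{x}')$ contains exactly one pre-orbit projecting to $y$; nothing forces the \emph{given} $\hat{y}$ to be that pre-orbit for the particular $\hat{x}$ you selected. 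The correct order of the argument is reversed: one first uses the explicit construction of $\hat{\nu}$ in \cite{CV17} (as a limit of lifted Ces\`aro averages) to obtain approximants $\hat{y}_{j_k}\to\hat{y}$ with $\hat{y}_{j_k}\in\hat{f}^{j_k}(\pi^{-1}(D(z_{j_k},j_k,\delta)))$, and then \emph{defines} $\hat{x}$ via $x_{-n}:=\lim_k f^{j_k-n}(z_{j_k})$ from \emph{those} centers $z_{j_k}$, so that $\hat{y}\in\hat{\Delta}(\hat{x})$ holds directly from the definition~\eqref{eq:def_lift_deltax}.
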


Let $\hat{H}_{\infty}$ be the set of points $\hat{x}\in M^{f}$ given by Lemma~\ref{prop:bijectiveprojectionondisks}. Then
 $\mbox{supp}(\hat{\nu})\subset\bigcup_{\hat{x}\in\hat{H}_{\infty}}\hat{\Delta}(\hat{x})$. This is a key step in the proof
that $\nu$ is a $f$-invariant hyperbolic measure . Moreover, 

\begin{theorem}\cite[Theorem~5.1]{CV17}
There exists $C_{3}>1$ so that the following holds: given $(\hat{x},l)$ 
there exists a family of conditional measures $(\hat{\nu}_{\hat{\Delta}})_{\hat{\Delta}\in\hat{\mathcal{K}}_{\infty}(\hat{x},l)}$ of $\hat{\nu}|_{\hat{K}_{\infty}(\hat{x},l)}$ such that $\pi_{*}\hat{\nu}_{\hat{\Delta}}\ll Leb_{\Delta}$, where $\Delta=\pi(\hat{\Delta})$, 
and 
$
\frac{1}{C_{3}}Leb_{\Delta}(B)\leq\pi_{*}\hat{\nu}_{\hat{\Delta}}(B)\leq C_{3}Leb_{\Delta}(B)
$
for every measurable subset $B\subset\Delta$ and for almost every $\hat{\Delta}\in\hat{\mathcal{K}}_{\infty}(\hat{x},l)$.\label{thm:abscontdisint}
\end{theorem}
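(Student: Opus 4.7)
My plan is to transfer bounded distortion estimates at $c$-cone-hyperbolic times (Proposition~\ref{prop.dist. volume ABV}) through the weak-$*$ limit defining $\hat\nu$ and then apply Rokhlin's disintegration theorem. Recall that $\hat\nu$ is the lift to $M^f$ of an accumulation point $\nu$ of the C\`esaro averages $\nu_n=\frac{1}{n}\sum_{j=0}^{n-1} f^j_*\Leb_{\mathscr{D}_j}$, so each $\nu_n$ is a finite convex combination of pushforwards $f^j_* \Leb_{D(x,j,\delta)}$ for $x\in H_j^*$, each supported on a hyperbolic disk $\Delta(f^j(x),\delta)$ of uniform radius $\delta$.

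First I would establish a \emph{finite-time density estimate}. Since $j$ is a $c$-cone-hyperbolic time for $x\in H_j^*$, Proposition~\ref{prop.dist. volume ABV} implies that $|\det Df^j(\cdot)|$ varies by at most a factor $C_1$ on $D(x,j,\delta)$; hence the density of the normalized pushforward $f^j_*\Leb_{D(x,j,\delta)}/\Leb_D(D(x,j,\delta))$ with respect to the normalized Lebesgue on $\Delta(f^j(x),\delta)$ is bounded pointwise between $C_1^{-2}$ and $C_1^{2}$. This estimate is stable under convex combinations at any fixed finite time and, crucially, depends neither on $j$ nor on the choice of pre-disk.

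Next I would lift to the natural extension, where distinct pre-orbits encoding disks that may overlap in $M$ become disjoint. By Lemma~\ref{prop:bijectiveprojectionondisks} and the construction~\eqref{eq:def_lift_deltax}, each atom $\hat\Delta$ of the partition $\hat{\mathcal K}_\infty(\hat x,l)$ projects bijectively onto a disk $\Delta=\pi(\hat\Delta)\subset M$. Rokhlin's disintegration theorem applied to $\hat\nu|_{\hat K_\infty(\hat x,l)}$ along this partition produces the candidate conditional measures $\hat\nu_{\hat\Delta}$; it then remains to compare $\pi_*\hat\nu_{\hat\Delta}$ with $\Leb_\Delta$ using uniform constants.

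The hard part will be the passage to the weak-$*$ limit: although the finite-time bound is uniform in $j_k$ and in the pre-disk, at finite level it compares pushforwards to $\Leb_{\Delta(f^{j_k}(z_{j_k}),\delta)}$, whereas the conclusion requires comparison with $\Leb_\Delta$ on the accumulated disk. To close this gap I would exploit the uniform H\"older control of tangent bundles along iterated disks (Proposition~\ref{prop:iteratesoftgbundleholder}) together with the exponential contraction along pre-orbits (Proposition~\ref{lem:suppdisk}) to show that the approximating disks converge to $\Delta$ in a $C^1$ sense, so their reference leafwise Lebesgues converge weakly to $\Leb_\Delta$. Combined with the adaptation of $\hat{\mathcal K}_\infty(\hat x,l)$ to the inverse-Markov-tower structure sketched in Section~\ref{sec:SRB}, the uniform pointwise density bound survives the limit, yielding $C_3^{-1}\Leb_\Delta(B)\le \pi_*\hat\nu_{\hat\Delta}(B)\le C_3\Leb_\Delta(B)$ with $C_3$ absorbing $C_1^2$ together with equicontinuity and normalization losses.
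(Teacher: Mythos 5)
The paper does not prove this theorem at all --- it is imported verbatim from \cite[Theorem~5.1]{CV17} --- so there is no in-text proof to compare against; your outline, however, reproduces exactly the strategy recalled in Subsection~\ref{sec:SRB}: uniform density bounds for the finite-time push-forwards $f^j_*\Leb_{D(x,j,\delta)}$ coming from bounded distortion at $c$-cone-hyperbolic times, disentanglement of overlapping hyperbolic disks by passing to the natural extension, and transport of these uniform bounds through the weak-$*$ limit onto the accumulation disks $\Delta(\hat x)$. This is essentially the argument of the cited proof, and you correctly isolate the one delicate step (replacing the reference measures on the approximating disks by $\Leb_\Delta$ on the limit disk via $C^1$-convergence).
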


%%%%%%%%%%%%%%%%%%%%%%%%%
\subsection{The second volume lemma}

In this subsection we restate and prove a volume lemma for the SRB measure $\mu$ on the attractor $\Lambda$. This
will be crucial to prove, in Subsection~\ref{wGibbs}, that the SRB measure satisfies a non-uniform version of the Gibbs property.
Recall that a SRB measure for $f$ is an invariant measure satisfying that $\pi_{*} \hat{\mu}_{\mathcal{P}(\hat{x})}\ll \Leb_{\pi(\mathcal{P}(\hat{x}))}$, for $\hat{\mu}$ almost every $\hat{x}\in \Lambda^f$, where $\hat{\mu}$ is the lift of $\mu$ to $\Lambda^f$, for any partition $\mathcal{P}$ subordinated to unstable manifolds. Since $\pi(\mathcal{P}(\hat{x}))\subset W^u_{loc}(\hat{x})$, $Leb_{\pi(\mathcal{P}(\hat{x}))}$ denotes the Lebesgue measure inherited by the Riemannian metric on the unstable manifold associated to $\hat{x}$.

Let us fix a partition $\mathcal{P}$ subordinated to unstable manifolds. Since the natural projection $\pi$ is a homeomorphism with its image when restricted to an atom of $\mathcal{P}$ we can define a measure $\hat{m}_{\hat{\gamma}}=(\pi|_{\hat{\gamma}})^{*}Leb_{\hat{\gamma}}$ for all $\hat{\gamma} \in \mathcal{P} $. So, if $\mu$ is a SRB measure then $\hat{\mu}_{\hat{\gamma}} \ll \hat{m}_{\hat{\gamma}}$. By the Radon-Nykodim theorem, there is a measurable function $\rho_{\hat{\gamma}}: \hat{\gamma}\rightarrow \mathbb{R}$, almost everywhere positive and satisfying that:
\begin{equation}\label{eq:RN}
\hat{\mu}_{\hat{\gamma}}(\hat{B})=\int_{\hat{B}}\rho (\hat{x})d\hat{m}_{\hat{\gamma}}(\hat{x}).
\end{equation}
The following lemma shows that the function $\rho_{\hat{\gamma}}$ is uniformly 
bounded away from zero and infinity in local unstable manifolds. More precisely:

\begin{lemma}\label{le:regularity_density}
Let $\mathcal{P}$ be a partition of $\Lambda^f$ subordinated to unstable manifolds. There exists $C>0$ such that
{
$$
e^{-C dist_{\gamma}(x_0,y_0)^\alpha} 
	\leq \frac{\rho_{\hat{\gamma}}(\hat{x})}{\rho_{\hat{\gamma}}(\hat{y})} 
	\leq  e^{C dist_{\gamma}(x_0,y_0)^\alpha},
$$}
for all $\hat{x},\hat{y}\in\hat{\gamma}$, for all $\hat{\gamma}\in\mathcal{P}$, where $\gamma=\pi(\hat{\gamma})$, $x_0 = \pi (\hat{x})$ e $y_0 =\pi (\hat{y})$.
\end{lemma}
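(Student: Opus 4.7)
The plan is to show that $\rho_{\hat\gamma}$ can be expressed as an infinite product of Jacobian ratios along the pre-orbit and then exploit two ingredients: the uniform H\"older continuity of the log-Jacobian along disks tangent to the cone field (Proposition~\ref{prop:iteratesoftgbundleholder}), and the uniform exponential backward contraction of distances along local unstable manifolds (Proposition~\ref{lem:suppdisk}, or property~(2d) of Proposition~\ref{prop:unstable_manifold} specialized to the contraction rate $c/2$ that the construction of Section~\ref{sec:SRB} guarantees).

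The first step is to derive, from the construction of $\hat{\mu}$ as an accumulation point of the C\`esaro averages $\nu_n = \tfrac1n \sum_{j=0}^{n-1} f_*^j \Leb_{\mathcal{D}_j}$, together with Theorem~\ref{thm:abscontdisint} on absolute continuity of conditionals, the standard formula
\begin{equation*}
\frac{\rho_{\hat\gamma}(\hat{x})}{\rho_{\hat\gamma}(\hat{y})} = \prod_{k=1}^{\infty} \frac{|\det Df(y_{-k})\mid_{T_{y_{-k}} \pi(\hat{f}^{-k}(\hat\gamma))}|}{|\det Df(x_{-k})\mid_{T_{x_{-k}} \pi(\hat{f}^{-k}(\hat\gamma))}|},
\end{equation*}
where $x_{-k}=\pi(\hat{f}^{-k}(\hat{x}))$ and $y_{-k}=\pi(\hat{f}^{-k}(\hat{y}))$. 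The right-hand side is the limit, as $n\to\infty$, of the Jacobian ratio $|\det Df^n(y_{-n})|_{TW^u}|/|\det Df^n(x_{-n})|_{TW^u}|$ that appears naturally when comparing the densities of the pushforwards $f^n_*\Leb_{D_n}$ on accumulation disks passing through $x_0$ and $y_0$.

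The second step is to bound each factor. Since the projections $\pi(\hat{f}^{-k}(\hat\gamma))$ are $C^1$-disks tangent to the cone field $C$ (they are backward iterates along $\hat{x}$ of a disk of uniform size that is tangent to $C$), Proposition~\ref{prop:iteratesoftgbundleholder} gives a uniform constant $L_1$ such that the function $J_k(z):=\log|\det Df(z)\mid_{T_z\pi(\hat{f}^{-k}(\hat\gamma))}|$ is $(L_1,\alpha)$-H\"older. Combined with the uniform backward contraction $dist_{\pi(\hat{f}^{-k}(\hat\gamma))}(x_{-k},y_{-k}) \le e^{-ck/2}\, dist_\gamma(x_0,y_0)$ from Proposition~\ref{lem:suppdisk}, taking logarithms yields
\begin{equation*}
\left|\log \frac{\rho_{\hat\gamma}(\hat{x})}{\rho_{\hat\gamma}(\hat{y})}\right| \le \sum_{k=1}^{\infty} L_1\, dist_{\pi(\hat{f}^{-k}(\hat\gamma))}(x_{-k},y_{-k})^{\alpha} \le L_1 \, dist_\gamma(x_0,y_0)^{\alpha} \sum_{k=1}^{\infty} e^{-\alpha c k/2},
\end{equation*}
so the lemma follows with $C = L_1/(1-e^{-\alpha c/2})$, a constant independent of $\hat\gamma$.

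The main obstacle is Step~1, namely the identification of the density ratio with the infinite product. Theorem~\ref{thm:abscontdisint} only furnishes two-sided $L^\infty$ bounds on $\pi_*\hat{\mu}_{\hat\Delta}$ versus $\Leb_\Delta$, whereas here we need the precise transformation rule for the density under the inverse-limit construction. One must therefore revisit the approximation of $\hat{\mu}_{\hat\gamma}$ by conditionals of $f^n_*\Leb_{D_n}$, verify that on each pre-disk the density with respect to $\Leb$ is given (up to a multiplicative constant) by the inverse unstable Jacobian, and then pass to the limit using the bounded distortion of Proposition~\ref{prop.dist. volume ABV} to justify interchanging limit and ratio. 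Once the formula is established, the H\"older-plus-exponential-contraction estimate above closes the proof uniformly for $\hat{\mu}$-a.e.\ $\hat\gamma\in\mathcal{P}$.
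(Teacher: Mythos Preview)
Your Step~2 is exactly what the paper does: the H\"older bound on the log-Jacobian along backward-contracted disks tangent to the cone field, summed as a geometric series, yielding the same constant $C = L_1/(1-e^{-\alpha c/2})$.

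The approaches diverge at Step~1. You aim for the full infinite-product formula for $\rho_{\hat\gamma}(\hat x)/\rho_{\hat\gamma}(\hat y)$ and correctly flag that Theorem~\ref{thm:abscontdisint} alone does not deliver it; your sketch of how to extract it from the approximation $f^n_*\Leb_{D_n}$ is plausible but is not a proof as written --- it is precisely the obstacle you name.

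The paper sidesteps this obstacle. It uses only the \emph{finite-$n$} identity
\[
\frac{\rho(\hat x)}{\rho(\hat y)} \;=\; \frac{\rho_n(\hat f^{-n}(\hat x))}{\rho_n(\hat f^{-n}(\hat y))} \cdot \frac{|\det(D\hat f^{-n}(\hat x)\mid_{T_{x_0}\gamma})|}{|\det(D\hat f^{-n}(\hat y)\mid_{T_{y_0}\gamma})|},
\]
which follows immediately from $\hat f$-invariance of $\hat\mu$, uniqueness of disintegrations, and change of variables --- no return to the construction of $\mu$ is needed. The Jacobian quotient is bounded exactly as in your Step~2. For the remaining factor, the paper applies Lusin's theorem to the globally defined map $\hat x\mapsto \rho_{\hat\gamma_{\hat x}}(\hat x)$ to obtain a compact set $\hat\Omega$ of $\hat\mu$-measure $>1-\epsilon$ on which it is uniformly continuous, and then uses Birkhoff's ergodic theorem to find infinitely many $n_k$ with $\hat f^{-n_k}(\hat x),\hat f^{-n_k}(\hat y)\in\hat\Omega$ simultaneously. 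Since $\hat d(\hat f^{-n_k}(\hat x),\hat f^{-n_k}(\hat y))\to 0$ along $\hat\gamma$, uniform continuity forces $\rho_{n_k}(\hat f^{-n_k}(\hat x))/\rho_{n_k}(\hat f^{-n_k}(\hat y))\to 1$, and passing to the limit gives the bound.

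So the paper's route is more self-contained: it uses only the SRB property and invariance, never the explicit pushforward construction. Your route, if Step~1 were completed, would give the stronger statement that $\rho_{\hat\gamma}$ equals the normalized infinite product, but at the cost of the technical work you yourself identify as missing.
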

\begin{proof}
Let $\hat{\gamma}$ be an atom of $\mathcal{P}$, let $\gamma=\pi(\hat{\gamma})$ and let $n\in\mathbb{N}$ be fixed. Since the lifts of unstable manifolds to $\Lambda^f$ are invariant by $\hat{f}^{-n}$ and the map $\pi|_{\hat{f}^{-n}(\hat{\gamma})}$ is injective, we conclude that $\hat{f}^{-n}(\mathcal{P})$ is also a partition subordinated to unstable manifolds. Denote $\hat{\gamma}_{-n}:=\hat{f}^{-n}(\hat{\gamma})$ and $\rho_n$ the density of $\hat{\mu}_{\hat{\gamma}_{-n}}$ with respect to $\hat{m}_{\gamma}$. 
By ~\eqref{eq:RN} and change of variables,
\begin{align*}
\hat{\mu}_{\hat{\gamma}_{-n}}(\hat{B}) & =\int_{\hat{f^n}(\hat{B})}\rho_n\circ \hat{f}^{-n}(\hat{x})|\det (D\hat{f}^{-n}|_{T_{\pi(\hat{x})}{\gamma}})|d\hat{m}_{\gamma}(\hat{x}).
\end{align*}
Here $D\hat{f}^{-n}(\hat{x})=(Df^n(x_{-n}))^{-1}$. As the disintegrations are unique (in a $\hat \mu$-full measure subset) then $\hat{f}^n_{*}\hat{\mu}_{\hat{\gamma}_{-n}}=\hat{\mu}_{\hat{\gamma}}$, and so
$
\rho(\hat{x})=\rho_n(\hat{f}^{-n}(\hat{x}))\cdot | \det ( D\hat{f}^{-n}(\hat{x})|_{T_{\pi(\hat{x})}\gamma} ) |.
$
Therefore, given $\hat{x},\ \hat{y}\in \hat\gamma$ we get
\begin{equation}
\frac{ \rho( \hat{x} ) }{ \rho( \hat{y} ) }=\frac{ \rho_n (\hat {f}^{-n} ( \hat{x} ) ) }{ \rho_n (\hat {f}^{-n} ( \hat{y} ) ) } \cdot \frac{ \left| \det\left( D\hat{f}^{-n}(\hat{x})|_{T_{\pi(\hat{x})}\gamma} \right) \right|}{ \left| \det\left( D\hat{f}^{-n}(\hat{y})|_{T_{\pi(\hat{y})}\gamma} \right) \right|}. \label{eq:quocient_density}
\end{equation}
Since $\gamma$ is contained in a local unstable manifold and the Jacobian of disks tangent to the cone field is H\"older continuous (recall Proposition \ref{prop:iteratesoftgbundleholder}) we 
conclude that the second quocient in the right side above is bounded above by 
$
e^{ C dist_{\gamma}(x_0,y_0)^\alpha},
$
where $C=L_1 \cdot \frac{e^{c\alpha/2}}{e^{c\alpha/2}-1}$.

Consider the map $\rho:\Lambda^f \rightarrow \mathbb{R}$ given by $\rho(\hat x)=\rho_{{\hat\gamma}_{\hat{x}}}(\hat{x})$, 
where $\hat{\gamma}_{\hat{x}}$ is the atom of $\mathcal{P}$ that contains $\hat{x}$. By Lusin's theorem for every $\vep>0$
there is a compact subset $\hat{\Omega}\subset \Lambda^f$ such that $\rho\mid_{\hat{\Omega}}$ is a (uniformly) continuous function and $\hat{\mu}(\Lambda^f \backslash \hat{\Omega} )<\epsilon$. 
In particular, given $k\in\mathbb{N}$ there exist $\delta_k>0$ such that 
$$
\left| \frac{\rho(\hat{z})}{\rho(\hat{w})}-1\right|<\frac1k,
$$
for all $\hat{z},\hat{w}\in \hat{\Omega}$ with $\hat{d}(\hat{z},\hat{w})<\delta_k$.
We will use the previous estimate for suitable returns of $\hat x$ and $\hat y$.
By Birkhoff ergodic theorem the frequency of visits of $\hat x$ and $\hat y$ to $\hat{\Omega}$
tends to $\hat{\mu}( \hat{\Omega} )\ge 1-\epsilon$. In particular there are infinitely many simultaneous returns 
of $\hat x,\hat y \in \hat \gamma$ to $\hat\Omega$.
Hence
there exists $n_k\in \mathbb{N}$ satisfying that $\hat{f}^{-n_k}(\hat{x}),\ \hat{f}^{-n_k}(\hat{y})\in \hat{\Omega}$ and $\hat{d}(\hat{x},\hat{y})<\delta_k$. 
Therefore,
$$
\left| \frac{\rho(\hat{f}^{-n_k}(\hat{x}))}{\rho(\hat{f}^{-n_k}(\hat{y}))} - 1 \right|<\frac1k
\quad \text{or, equivalently,}
\quad
\left| \frac{\rho_{n_k}(\hat{f}^{-n_k}(\hat{x}))}{\rho_{n_k}(\hat{f}^{-n_k}(\hat{y}))} - 1 \right|<\frac1k.
$$
In consequence we find a sequence $n_k\to \infty$ satisfying 
$$
\lim_{k\to\infty}\frac{\rho_{n_k}(\hat{f}^{-n_k}(\hat{x}))}{\rho_{n_k}(\hat{f}^{-n_k}(\hat{y}))}=1.
$$
Then, the expression \eqref{eq:quocient_density} also can be bounded by $e^{ C dist_{\gamma}(x_0,y_0)^\alpha}$.
The other inequality in the lemma follows by inversion of the roles of $\hat{x}$ and $\hat{y}$ above.
\end{proof}

\begin{proposition} \label{le:volume2}
Assume that the density $\rho:\Lambda^f\rightarrow \mathbb{R}$ satisfies $\frac1{K^*} \le \rho_{\hat{\gamma}}(\hat{y})\leq K^*$ for all 
$\hat{y}\in \hat{B}$.
Let $\delta>0$ be given by the definition of $c$-cone-hyperbolic times.
Given $0<\vep<\delta$ there exists $K_2=K_2(\vep)>0$ so that if $n$ is a $c$-cone-hyperbolic time for $x$ and $F$ is any subspace of dimension $d$ contained in $C(x)$ then
\begin{equation}
K_2(\vep)^{-1} e^{-\log |\det Df^n(x)\mid_F|} \le \mu(\Lambda \cap B(x,n,\vep)) \le K_2(\vep) \label{key}e^{-\log |\det Df^n(x)\mid_F|}.
\end{equation}
\end{proposition}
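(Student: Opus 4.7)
The plan is to adapt the foliation-and-distortion argument of Proposition~\ref{le:volume1}, replacing the ambient Lebesgue measure and the fake foliation by the SRB disintegration along (honest) unstable manifolds and using the hypothesis that the conditional densities $\rho_{\hat\gamma}$ are bounded in $[1/K^*,K^*]$.

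First I would fix a measurable partition $\mathcal{P}$ of $\Lambda^f$ subordinated to unstable manifolds as in Section~\ref{sec:SRB}, disintegrate the lift $\hat\mu$ of $\mu$ as $\hat\mu = \int \hat\mu_{\hat\gamma}\,d\tilde\mu(\hat\gamma)$, and push-forward via $\pi$. Since $\mu = \pi_*\hat\mu$, this gives
\begin{equation*}
\mu(\Lambda\cap B(x,n,\vep)) = \int \hat\mu_{\hat\gamma}\bigl(\pi^{-1}(B(x,n,\vep))\cap \hat\gamma\bigr)\,d\tilde\mu(\hat\gamma),
\end{equation*}
and by the assumption on $\rho_{\hat\gamma}$ together with Theorem~\ref{thm:abscontdisint} one has $\hat\mu_{\hat\gamma}(\pi^{-1}(B)) \asymp \Leb_{\gamma}(B\cap\gamma)$ uniformly, where $\gamma:=\pi(\hat\gamma)$.

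Next I would estimate $\Leb_\gamma(\gamma\cap B(x,n,\vep))$ for each atom whose projection meets the dynamic ball, following the argument used in the proof of Proposition~\ref{le:volume1} but now on genuine unstable disks. Since $\gamma$ is tangent to $C$ and $n$ is a $c$-cone-hyperbolic time for $x$, Remark~\ref{rmk:open} ensures that $n$ is a $(c/2)$-cone-hyperbolic time for every $z\in B(x,n,\delta_1)\cap\gamma$; Proposition~\ref{prop:contraidist} then guarantees that $f^n$ maps a neighborhood of $z$ in $\gamma$ diffeomorphically onto a disk of radius $\asymp\vep$ inside $f^n(\gamma)$, and Proposition~\ref{prop.dist. volume ABV} yields
\begin{equation*}
\Leb_\gamma(\gamma\cap B(x,n,\vep))\asymp |\det Df^n(z)|_{T_z\gamma}|^{-1}.
\end{equation*}
Corollary~\ref{cor:subspaces} replaces $T_z\gamma\subset C(z)$ by the prescribed subspace $F\subset C(x)$ (parallel-transported to $T_zM$, which still lies near $C(z)$ by continuity of the cone field), and the H\"older continuity of $z\mapsto \log|\det Df^n(z)|_F|$ along pieces where points stay $\vep$-close under $n$ iterates (Proposition~\ref{prop:iteratesoftgbundleholder}) upgrades this to $\Leb_\gamma(\gamma\cap B(x,n,\vep))\asymp e^{-\log|\det Df^n(x)|_F|}$, with constants depending only on $\vep$. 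Integrating against $\tilde\mu$, which is a probability, immediately gives the upper bound in \eqref{key}.

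For the lower bound, I would choose a pre-orbit $\hat x_0\in\pi^{-1}(x)\cap \hat H_\infty$ so that $x$ sits on a local unstable disk $\Delta(\hat x_0)$ of uniform size $\delta$, and exploit the local product structure around $\hat x_0$ coming from Theorem~\ref{thm:abscontdisint}: nearby pre-orbits in $\hat H_\infty$ carry local unstable disks that, together with the contraction along $E^s$, foliate a small cylinder inside $\Lambda$ containing a neighborhood of $x$. The absolutely continuous disintegration together with $x\in\Lambda=\supp(\mu)$ then forces a uniform lower bound $\tilde\mu(\Gamma_0)\ge c(\vep)>0$ on the set $\Gamma_0$ of atoms $\hat\gamma$ for which $\gamma\cap B(x,n,\vep)$ contains a $\vep$-disk centered at some $z$; repeating the per-atom estimate above for $\hat\gamma\in\Gamma_0$ finishes the proof. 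The main obstacle is precisely this lower bound on the transverse measure: because $f$ is non-invertible, unstable manifolds depend on the chosen pre-orbit and may self-intersect, so $\mathcal{P}$ lives on $\Lambda^f$ and not on $\Lambda$, and the ``thickness'' of atoms intersecting a prescribed neighborhood of $x\in M$ has to be extracted by combining the structure of $\hat H_\infty$ and $\hat\Delta(\hat x)$ developed in Section~\ref{sec:SRB} with the bounded density hypothesis, rather than by an elementary Fubini estimate as in the proof of Proposition~\ref{le:volume1}.
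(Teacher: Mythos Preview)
Your approach is essentially the paper's: disintegrate $\hat\mu$ along a partition $\mathcal P$ subordinated to unstable manifolds, convert the conditional measures to leaf Lebesgue via the bounded-density hypothesis (the paper inserts Lemma~\ref{le:regularity_density} here as well), and then apply the per-leaf estimates \eqref{eq:dynball1d}--\eqref{eq:dynball1d2} from the proof of Proposition~\ref{le:volume1} before integrating over the quotient. For the lower bound the paper simply asserts that ``the inverse inequality is obtained in the same way'', so your explicit isolation of the transverse-measure issue and your sketch via the product structure on $\hat H_\infty$ actually go further than what the paper writes, but the overall strategy coincides.
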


\begin{proof}
Let $n$ be a $c$-cone-hyperbolic time for $x$ and $\vep>0$ be as above.
If $\hat{\mu}$ is the lift of the SRB measure $\mu$ to $\Lambda^f$ then $\mu=\pi_{*}\hat{\mu}$. Fix a partition $\mathcal{P}$ of $\Lambda^f$ subordinated to the unstable manifolds. Then
\begin{align*}
\mu(\Lambda \cap B(x,n,\vep)) & =\pi_{*}\hat{\mu}(B(x,n,\vep))=\hat{\mu}(\pi^{-1}(B(x,n,\vep)))=\int_{\mathcal{P}}\hat{\mu}_{\hat{\gamma}}(\pi^{-1}(B(x,n,\vep)))d\hat{\mu}(\hat{\gamma}).
\end{align*}
Denote $\hat{B}:=\pi^{-1}(B(x,n,\vep))$. 
Since $ \pi(\hat{\gamma}) $ is contained in some local unstable manifold  we have that  $ \pi(\hat{\gamma}) $ is tangent to the cone field
$C$. In particular, $dist_{\gamma}(x_0,y_0)\leq 2dist_M(x_0,y_0)<4\varepsilon$ for all $\hat{x},\hat{y}\in \hat{\gamma}\cap \hat{B}$.
Using Lemma \ref{le:regularity_density} and that the density of $\mu$ is bounded away from zero and infinity, 
for every $\hat{\gamma}\in\mathcal{P}$ 
there exists $\hat{y}_{\hat{\gamma}}\in \hat{\gamma}$ so that
\begin{align*}
\hat{\mu}_{\hat{\gamma}}(\hat{B}) & = \int_{\hat{B}}\rho_{\hat{\gamma}}(\hat{x})d\hat{m}_{\hat{\gamma}}(\hat{x})
	\leq e^{C(4\varepsilon)^\alpha} \rho_{\hat{\gamma}}(\hat{y}_{\hat{\gamma}})\hat{m}_{\hat{\gamma}}(\hat{B}) 
	\leq K^* e^{C(4\varepsilon)^\alpha}  \hat{m}_{\hat{\gamma}}(\hat{B})
\end{align*}
and, consequently,
$
\mu(\Lambda \cap B(x,n,\vep))  \leq {e^{C(4\varepsilon)^\alpha}} K^* 
\int_{\mathcal{P}} \hat{m}_{\hat{\gamma}}(\hat{B})d\hat{\mu}(\hat{\gamma}),
$
for every sufficiently small $\varepsilon$.
Inequality ~\eqref{eq:dynball1d2} now ensures that
\begin{align*}
\hat{m}_{\hat{\gamma}}(\hat{B}) & =Leb_{\pi(\hat{\gamma})}(B(x,n,\vep)) \leq K(\varepsilon) |\det Df^n(x)\mid_F|^{-1}
\end{align*}
where $F$ is the tangent space to some unstable disc that contains $x$, for all $\hat{\gamma}$ that intersects $\hat{B}$. Therefore,
\begin{equation*}
\mu(\Lambda \cap B(x,n,\vep)) \leq K^* {e^{C(4\varepsilon)^\alpha}}
  K(\vep) e^{-\log |\det Df^n(x)\mid_F|}.
\end{equation*}
The inverse inequality is obtained in the same way, using the inverse inequalities from Lemma \ref{le:regularity_density} and Proposition \ref{le:volume1}. So, taking $K_2(\vep)=K^* {e^{C(4\varepsilon)^\alpha}}
K(\vep)$ we complete the proof of the proposition.
\end{proof}

%%%%%%%%%%%%%%%%%%%%%%%%%
\section{Large deviations: Proof of Theorem ~\ref{thm:main} }\label{sec:LDP}

%%%%%%%%%%%%%%%%%%%%%%%%%%%%%%%%%%%%%%%%%%%
\subsection{The SRB measure is a Weak Gibbs measure}\label{wGibbs}

In this subsection we prove that under the mild assumptions in Theorem~\ref{thm:main} the SRB measure is a weak Gibbs measure, 
that is, satisfies the conclusion of Proposition~\ref{co:measure.control} below.
Let $H$ be the set of points with infinitely many $c$-cone hyperbolic times.  
It is a standard argument to prove that the volume lemma together with the non-lacunarity of the sequences of cone-hyperbolic times 
ensures the weak Gibbs measure. More precisely:

\begin{proposition}\label{co:measure.control}
Assume that the sequence of cone-hyperbolic times is non-lacunar for $\mu$-almost every $x$.
Then, for $\mu$-almost every $x\in H$, there exists $\delta>0$ so that given $0<\vep<\delta$ there exists a sequence $(K_n(x,\vep))_n$ of positive
integers such that 
$\lim\limits_{\vep\to 0} \limsup\limits_{n \to \infty}
    \frac1n \log K_n(x,\vep)=0$ for $\mu$-almost every $x$ 
and, for {\bf any} subspace $F_1$ of dimension $d$ contained in $C(x)$, it holds 
\begin{equation*}
K_n(x,\vep)^{-1} e^{-\log |\det Df^n(x)\mid_{F_1}|} \le \mu(\Lambda \cap B(x,n,\vep)) \le K_n(x,\vep) e^{-\log |\det Df^n(x)\mid_{F_1}|}
\end{equation*}
for every $n\ge 1$.
\end{proposition}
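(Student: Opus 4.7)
\proc{Proof sketch.}
The plan is to sandwich an arbitrary time $n$ between two consecutive $c$-cone-hyperbolic times, apply Proposition~\ref{le:volume2} at those times, and absorb the discrepancy into the subexponential factor using the non-lacunarity hypothesis.

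Fix a full $\mu$-measure subset $H'\subset H$ of points $x$ whose cone-hyperbolic sequence $(n_i(x))_{i\ge 1}$ is non-lacunar, and set
$$
L:=\sup\Big\{\,\big|\log|\det Df(y)|_F|\,\big| : y\in M,\; F\subset C(y),\; \dim F=d\,\Big\},
$$
which is finite by compactness of $M$, continuity of $C$ and the $C^{1+\alpha}$ regularity of $f$. Given $x\in H'$ and $n\ge n_1(x)$, let $i=i(n,x)$ be the unique index with $n_i\le n<n_{i+1}$. Using the cocycle identity
$$
\log|\det Df^n(x)|_{F_1}|=\sum_{j=0}^{n-1}\log\big|\det Df(f^j(x))|_{Df^j(F_1)}\big|,
$$
the difference $\big|\log|\det Df^n(x)|_{F_1}|-\log|\det Df^{n_i}(x)|_{F_1}|\big|$ is at most $(n-n_i)L$, and analogously for $n_{i+1}$ in place of $n_i$. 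Note that the positive invariance of the cone field in (H3) ensures $Df^j(F_1)\subset C(f^j(x))$ for every $j\ge 0$, so the bound $L$ indeed applies to every term in the sum.

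For the upper bound, the monotonicity $B(x,n,\vep)\subset B(x,n_i,\vep)$ (since $n\ge n_i$) together with Proposition~\ref{le:volume2} at the $c$-cone-hyperbolic time $n_i$ yields
$$
\mu(\Lambda\cap B(x,n,\vep))\le\mu(\Lambda\cap B(x,n_i,\vep))\le K_2(\vep)\,e^{-\log|\det Df^{n_i}(x)|_{F_1}|}\le K_2(\vep)\,e^{(n-n_i)L}\,e^{-\log|\det Df^n(x)|_{F_1}|}.
$$
For the lower bound, the reverse inclusion $B(x,n_{i+1},\vep)\subset B(x,n,\vep)$ (since $n\le n_{i+1}$) combined with Proposition~\ref{le:volume2} at $n_{i+1}$ gives
$$
\mu(\Lambda\cap B(x,n,\vep))\ge\mu(\Lambda\cap B(x,n_{i+1},\vep))\ge K_2(\vep)^{-1}\,e^{-(n_{i+1}-n)L}\,e^{-\log|\det Df^n(x)|_{F_1}|}.
$$
Both discrepancies are dominated by $(n_{i+1}-n_i)L$, so the choice
$$
K_n(x,\vep):=K_2(\vep)\,\exp\big[(n_{i(n,x)+1}-n_{i(n,x)})L\big]
$$
(extended by a harmless constant for the finitely many $n<n_1(x)$) delivers both inequalities simultaneously for every subspace $F_1\subset C(x)$ of dimension $d$.

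It remains to verify the subexponential growth. Since $n\ge n_i$,
$$
\frac{1}{n}\log K_n(x,\vep)\le\frac{\log K_2(\vep)}{n}+L\cdot\frac{n_{i+1}-n_i}{n_i}=\frac{\log K_2(\vep)}{n}+L\Big(\frac{n_{i+1}}{n_i}-1\Big),
$$
and the right-hand side tends to $0$ as $n\to\infty$ by the non-lacunarity $n_{i+1}/n_i\to 1$; hence $\limsup_{n}\frac{1}{n}\log K_n(x,\vep)=0$ for every $\vep>0$, and in particular the iterated limit $\lim_{\vep\to 0}\limsup_{n}\frac{1}{n}\log K_n(x,\vep)$ vanishes. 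The only conceptual step in the argument is the elementary observation that dynamic balls are monotone in time; the real work has been absorbed into Proposition~\ref{le:volume2}, and the only delicate hypothesis being invoked is non-lacunarity, precisely in order to make the multiplicative gap $e^{(n_{i+1}-n_i)L}$ subexponential.
\ep
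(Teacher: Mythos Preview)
Your argument is correct and follows essentially the same approach as the paper: sandwich $n$ between consecutive $c$-cone-hyperbolic times $n_i\le n<n_{i+1}$, apply the second volume lemma (Proposition~\ref{le:volume2}) at those times, and absorb the mismatch $\big|\log|\det Df^n(x)|_{F_1}|-\log|\det Df^{n_i}(x)|_{F_1}|\big|$ into a factor that is subexponential by non-lacunarity. The paper carries out only the upper bound explicitly (using $n_i$) and declares the lower bound analogous; you in fact spell out the lower bound via $n_{i+1}$, and your choice of the uniform constant $L=\sup_{y,F}\big|\log|\det Df(y)|_F|\big|$ is a clean substitute for the paper's specific constant $d^{-1}\min_y\|Df(y)^{-1}\|$.
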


\begin{proof}
We will prove the upper bound in the proposition (the lower bound is analogous).
Let  $\delta>0$ be given by the second volume lemma. Then, for every $0<\vep<\delta$ there exists $K_2(\vep)>1$ so that if $n$ is a $c$-cone-hyperbolic time for $x$ 
and $F_1$ is {any} 
subspace of dimension $d$ contained in $C(x)$ then
\begin{equation*}
K_2(\vep)^{-1} e^{-\log |\det Df^n(x)\mid_{F_1}|} \le \mu(\Lambda \cap B(x,n,\vep)) \le K_2(\vep) e^{-\log |\det Df^n(x)\mid_{F_1}|}.
\end{equation*}
Given an arbitrary $n\ge 1$ write $n_i(x) \leq n < n_{i+1}(x)$, where
$n_i(x)$ and $n_{i+1}(x)$ are consecutive $c$-cone hyperbolic times for $x$. Then taking $n_i=n_i(x)$ it follows that
\begin{align*}
\mu(B(x,n,\vep))
         \le \mu(B(x,n_i,\vep)) 
         & \le K_2(\vep) e^{-\log |\det Df^{n_i}(x)\mid_{F_1}|}\\
         & = K_2(\vep) e^{-\log |\det Df^{n}(x)\mid_{F_1}|+\log |\det Df^{n-n_i}(f^{n_i}(x))\mid_{F_1}|}\\
        & = K_n(x,\vep) \, e^{-\log |\det Df^{n}(x)\mid_{F_1}|},
\end{align*}
where 
\begin{equation}\label{relationtail}
K_n(x,\vep):=K_2(\vep) (d^{-1} \min_{y\in M} \| Df(y)^{-1}\|)^{{n-n_i}} 
\end{equation}
 (depends
only on the center $x$). 
Now, since
$c$-cone-hyperbolic times concatenate, note that 
\begin{align*}
\frac1n \log K_n(x,\vep) & \le \frac1n\log K_2(\vep) + \frac{n-n_i(x)}n \log (d^{-1} \min_{y\in M} \| Df(y)^{-1}\|) \\
	& \le \frac1n\log K_2(\vep) + \frac{n_{i+1}(x)-n_i(x)}n \log (d^{-1} \min_{y\in M} \| Df(y)^{-1}\|)
\end{align*}
which, by assumption, tends to zero as $n\to\infty$.
This finishes the proof.
\end{proof}

\begin{remark}
In the case that $\Lambda$ is a hyperbolic set for the endomorphism $f$, it admits a Markov partition. 
If this is the case, since the SRB measure $\mu$ satisfies a weak Gibbs property, several consequences can be obtained from 
\cite{VZ}: exponential large deviation bounds and almost sure estimates for the error term in the Shannon-McMillan-Breiman convergence to entropy,  and a topological characterization of large deviations bounds for Gibbs measures, and deduce 
their local entropy is zero (see \cite{VZ} for the precise statements). 
\end{remark}

%%%%%%%%%%%%%%%%%%%%%%%%%%%%%%%%%%%%%%%%%%%
\subsection{Large deviations for the SRB measure}

Since the SRB measure is a weak Gibbs measure (Proposition~\ref{co:measure.control}) we 
adapt the methods used in \cite{Var12, VZ0, You98} to obtain large deviations estimates for weak Gibbs measures. 
The main difference here lies on the non-additive 
characterization of volume expansion along the invariant cone field
which in turn requires some ingredients from the variational principle for sub-additive sequences of potentials.

We proceed with the proof of Theorem~\ref{thm:main}.
Let $\beta>0$, $0<\vep<\de_0/2$ and $n \ge 1$ be fixed and let $F\subset \mathbb R$ be a closed interval. 
Given $n\ge 1$ set $B_n=\{x \in M: \frac{1}{n}\sum_{j=0}^{n-1}\phi(f^j(x))\in F\}$
and write
\begin{equation}\label{eq:decompB}
B_n
\subset \{ x\in \Lambda \colon K_n(x,2\vep) > e^{\beta n}\} \cup \left(B_n \cap \{ x\in \Lambda \colon K_n(x,2\vep) \le e^{\beta n}\}\right)
\end{equation} 
where $K_n(x,2\vep)$ is given by ~\eqref{relationtail}.
Set 
$C=d^{-1} \min_{y\in M} \| Df(y)^{-1}\|$. 
Observe that that if $n\ge 1$ is large (depending only on $\vep$)
\begin{align*}
\{ x\in \Lambda \colon K_n(x,2\vep) > e^{\beta n}\} 
	& \subset \{ x\in \Lambda \colon  (n-n_i(x)) \log C > \beta n - \log K_2(2\vep) > \frac\beta2 n \}\\ 
	& \subset \Big\{ x\in \Lambda \colon  n_1(f^{n_i(x)}(x))  > \frac{\beta n}{2\log C} \Big\} \\
	& \subset \bigcup_{k=1}^{n} \Big\{ x\in \Lambda \colon  n_1(f^{k}(x))  > \frac{\beta n}{2\log C} \Big\} 
\end{align*}
where $n_i(x) \le n < n_{i+1}(x)$ are consecutive cone-hyperbolic times for $x$.
Since $\mu$ is $f$-invariant then
\begin{equation}\label{estimateeq}
\mu(\{ x\in \Lambda \colon K_n(x,\vep) > e^{\beta n}\} ) \le 
	n \, \mu\Big( \big\{x\in \Lambda\colon   n_1(x)  > \frac{\beta n}{2\log C} \big\}\Big).
\end{equation}
On the other hand,  if $E_n\subset B_n \cap \{ x\in \Lambda \colon K_n(x,2\vep) \le e^{\beta n}\}$ is a maximal $(n,\vep)$-separated set, $B_n\cap \De_{n}$ is contained in the union of the dynamical balls $B(x,n,2\vep)$
centered at points of $E_n$ and, consequently,
\begin{equation}\label{eq.upper}
\mu(B_n) < 
        n \, \mu\Big( \big\{x\in \Lambda\colon   n_1(x)  > \frac{\beta n}{2\log C} \big\}\Big) 
        + e^{\beta n} \sum_{x \in E_n} e^{-J_n(x)}
\end{equation}
for every large $n$, where $J_n(x)=\log |\det Df^{n}(x)\mid_{F_1}|$ and $F_1$ is any subspace of dimension $d$
in $C(x)$.
Now, consider the probability measures $\si_n$ and
$\eta_n$ given by
$$
\si_n
   =\frac{1}{Z_n} \sum_{x \in E_n} e^{-J_n(x)} \de_x
   \quad\text{and}\quad
\eta_n
    = \frac{1}{n} \sum_{j=0}^{n-1} f^j_* \si_n,
$$
where $Z_n=\sum_{x \in E_n} e^{-J_n(x)}$, and let $\eta$ be a
weak$^*$ accumulation point of the sequence $(\eta_n)_{n}$. It is
not hard to check that $\eta$ is an $f$-invariant probability
measure. Assume $\cP$ is a partition of $M$ with diameter smaller
than $\vep$ and $\eta(\partial\cP)=0$. Each element of $\cP^{(n)}$
contains a unique point of $E_n$. As in the proof of the variational principle 
for sub-additive potentials (cf. \cite[pp. 648--550]{CFH})
$$
H_{\si_n}(\cP^{(n)}) - \int J_n (x) \,d\si_n
    = \log \Big( \sum_{x\in E_n} e^{-J_n(x)} \Big)
$$
which guarantees  
that
\begin{equation}\label{eq.upper2}
\limsup_{n \to \infty}
    \frac{1}{n} \log Z_n
    \leq h_\eta(f) - \lim_{n\to\infty} \frac1n \int J_n(x) \, d\eta(x)
    =  h_\eta(f) -\int \Gamma_\eta(x) \, d\eta.
\end{equation}
We used  the definition
of $\Gamma_\eta$ in ~\eqref{def:Gamma} and Corollary~\ref{cor:subspaces} to guarantee that the limit in the right-hand side 
above exists. Since $\frac1n \sum_{j=0}^{n-1} \phi (f^j(x)) \in F$ for every $x\in E_n$ then $\int \phi \, d\eta_n \in F$ for every $n\ge 1$
and
$\int \phi \, d\eta \in F$ by weak$^*$ convergence.
Finally, equations ~\eqref{relationtail}, \eqref{eq.upper} and \eqref{eq.upper2} imply that
for every $\beta>0$
\begin{align}
\limsup_{n \to \infty} \frac{1}{n} \log \mu(B_n)
   & \leq \max\{{\mathcal E}_\mu(\beta) \; ,  -\inf_{c\in F 
   } I(c)+\beta\} \label{thmA-1}
\end{align}
where
$$
{\mathcal E}_\mu(\beta)=\limsup_{n\to\infty} \frac1n \log \mu\Big(x \colon n_1( x) >\frac{\beta n}{2\log C} \Big)
$$
and 
$$
I(c)= \sup \{ h_\eta(f) - \int \Gamma_\eta(x) \, d\eta \colon \eta\in \mathcal M_1(f),\, \int \phi \, d\eta\in F  \}.
$$
Since ~\eqref{thmA-1} holds and $\beta>0$ is arbitrary, taking the infimum over all possible $\beta>0$
in the second alternative in ~\eqref{thmA-1} we deduce that for every $\beta>0$
\begin{align*}
\limsup_{n \to \infty} \frac{1}{n} \log \mu(B_n)
   & \leq 
   \inf_{\beta>0} \Big[\max\{{\mathcal E}_\mu(\beta) \; ,  -\inf_{c\in F 
   } I(c)+\beta\}\Big]
\end{align*}
which prove the second item in Theorem~\ref{thm:main}. 
This completes the proof of the theorem.

\begin{remark}
Note that 
$$
\frac{{\mathcal E}_\mu(\beta)}{\beta}=\limsup_{n\to\infty} \frac1{\beta n} \log \mu\big(x \colon n_1( x) >\frac{\beta n}{2\log C} \big)
	\le  \frac1{2\log C} \limsup_{n\to\infty} \frac1{n} \log \mu\big(x \colon n_1( x) >n\big),
$$
where the right-hand side is a constant $\mathcal E\le 0$ which independs on $\beta$. In consequence, 
${\mathcal E}_\mu(\beta) \le \beta \mathcal E \le 0$ for every $\beta>0$. 
Hence, the inequality is effective for large deviations when there exists $\beta>0$ so that ${\mathcal E}_\mu(\beta)<0$
or, in other words, when the first cone-hyperbolic time $n_1$ has exponential tail.
\end{remark}

 %%%%%%%%%%%%%%%%%%%%%%%%%%%%%%%%%%%%%%%%%%%
\subsection{Proof of Corollary~\ref{cor:A} }

Assume that we are in the context of Theorem~\ref{thm:main} and that $n_1$ has exponential tails.
On the one hand, there exists $\cE>0$ so that ${\mathcal E}_\mu(\beta) \le -\beta \cE$ for all $\beta>0$. 
On the other hand, it follows from the proof of 
Theorem~\ref{thm:main} that for every $\beta>0$ there exists a probability measure $\eta$
such that  $\int \phi\, d\eta \notin (\int \phi\,d\mu-\delta,\int \phi\,d\mu+\delta)$
and 
\begin{align}\label{eqcoreta}
\limsup_{n \to \infty} \frac{1}{n} \log \mu(B_n)
   & \leq \max\{{\mathcal E}_\mu(\beta) \; ,  h_\eta(f) -\int \Gamma_\eta(x) \, d\eta +\beta\} 
\end{align}
for every $\beta>0$ (recall equations \eqref{eq.upper} - \eqref{thmA-1}).
By Ruelle inequality it follow that 
$$
h_\eta(f) -\int \Gamma_\eta(x) \, d\eta = h_{\eta}(f)-\int \sum_{\lambda_i(\eta,x)>0} \lambda_i(\eta,x) \, d\eta \le 0.
$$
By Pesin's entropy formula (\cite[Theorem VII.1.1]{QXZ09}), the unique SRB measure $\mu$ for $f$ on the attractor $\Lambda$ is the
unique invariant measure that satisfies the equality
$
h_{\mu}(f)=\int \sum_{\lambda_i(\mu,x)>0} \lambda_i(\mu,x) \, d\mu.
$
In consequence, 
there exists $\beta>0$ so that the right-hand side of ~\eqref{eqcoreta} is strictly negative. 
This proves the corollary.

%%%%%%
\section{Some examples\label{sec:examples}}

In this section we give some applications  of our main results in the context of Anosov and 
partially hyperbolic endomorphisms. 

%%%%%%%
\subsection{Anosov endomorphisms}Let $f:M\rightarrow M$ be an Anosov endomorphism, i.e., $f$ be a local diffeomorphism
satisfying (H1)-(H4) on $M$ with condition \eqref{eq:nuexpansion} replaced by $\|(Df(x)|_{C(x)}^{-1}\|\leq e^{-2c}<1$ for all $x\in M$.
Then for all $\hat{x}\in M^f$ there exists a splitting $T_{\hat{x}}M^f=E^s_{x_0}\oplus E^u_{\hat{x}}$ and constants $C>0$ and $\lambda \in (0,1)$ satisfying that: (1) $Df(x_0)\cdot E^s_{x_0}=E^s_{f(x_0)}$ and $Df(x_0)\cdot E^u_{\hat{x}}=E^u_{\hat{f}(\hat{x})}$ (2) $ \| Df^n(x_0)\cdot v \|\leq C\cdot \lambda^n \|v\| $ for all $v\in E^s_{x_0}$ and  $n\in\mathbb{N}$; (3) $\|(Df^n(x_{-n}))^{-1}\cdot v\| \leq C\lambda^n\|v\|$ for all $v\in E^u_{\hat{x}}$ \cite{Prz76}. 
It is clear that this class of endomorphisms fits in the class of dynamics considered here, hence there exists finitely many SRB measures for $f$ and it is unique whenever $f$ is transitive  \cite[Theorem A]{CV17}. 
Observe that, the uniform expansion along the cone field $C$ implies that all times $c$-cone-hyperbolic time. Consequently, the non-lacunarity assumption or the sequence of $c$-cone-hyperbolic times is trivially satisfied.  
Therefore, Theorem~\ref{thm:main} and Corollary~\ref{cor:A} provide large deviations upper bounds for  
for the SRB measure of the Anosov endomorphism.
This provides an alternative proof to the large deviations estimates for uniformly hyperbolic endomorphisms obtained in 
\cite[Theorem 1.2]{LQZ03} with respect to continuous potentials for the SRB measure, which explores
the H\"older continuity of the unstable Jacobian on the natural extension $M^f$.

%%%%%%%
\subsection{Endomorphisms derived from Anosov}

Dynamical systems in the isotopy class of uniformly hyperbolic ones have been 
intensively studied in the last decades, as candidates for robustly transitive dynamics. 
The class of $C^1$-robustly transitive 
non-Anosov diffeomorphisms considered by  Ma\~n\'e 
are among the first classes of examples of this kind, and their SRB measures were 
constructed by Carvalho in \cite{Car93}. 
In this subsection we illustrate how some partial hyperbolic endomorphisms 
can be obtained, by local perturbations,  in the isotopy class of hyperbolic endomorphisms.
In ~\cite{Sumi} Sumi constructed non-hyperbolic topologically mixing partially hyperbolic endomorphisms on $\mathbb T^2$ .
We make the construction of the example in dimension $3$ for simplicity although similar
statements hold in higher dimension. 
In what follows we give an example of dynamical system with hyperbolic periodic points with different index but we
could consider also the case of existence of periodic points with an indifferent direction. 

Consider $M=\mathbb{T}^{3}$, let $g:M\rightarrow M$ be
a linear Anosov endomorphism induced by a hyperbolic matrix
$A\in \mathcal M_{3\times 3}(\mathbb Z)$ displaying three real eigenvalues and 
such that $TM=E^{s}\oplus E^{u}$ is the $Dg$-invariant splitting, 
where $\dim E^{u}=2$. For instance, take a matrix of the form
$$A=
\left(
\begin{array}{ccc}
n & 1 & 0 \\
1 & 1 & 0 \\
0 & 0 & 2 
\end{array}
\right)
$$
for an integer $n \ge 2$. The map $g$ is a special Anosov endomorphism, meaning that the unstable space 
independs of the pre-orbits and it admits a finest dominated splitting $E^s \oplus E^u \oplus E^{uu}$.
If $p\in M$ is a fixed point for $g$ 
and $\delta>0$ is sufficiently small, 
one can write $g$ on the ball $B(p,\delta)$ (in terms of local coordinates in $E_{p}^{s}\oplus E_{p}^{u}$) by
$
g(x,y,z)=(f(x), h(y,z)), 
$
where $x\in E_{p}^{s}$, $(y,z)\in E_{p}^{u}$, $f$ is a contraction along $E_{p}^{s}$ and
$h$ is expanding along $E^u_p$.
Let $\lambda_{2},\lambda_{3}\in\mathbb{R}$ be the eigenvalues of
$Dg(p)\mid_{E_{p}^{u}}$. Suppose that $\left|\lambda_{2}\right|\geq\left|\lambda_{3}\right|>1$.
Consider an isotopy $\left[0,1\right]\ni t\mapsto h_{t}$ satisfying:
(i) $h_{0}=h$;
(ii) for every $t\in\left[0,1\right]$ the diffeomorphism $h_{t}$ has a fixed point $p_{t}$ (continuation
of $p$) which, without loss of generality, we assume to coincide with $p$;
(iii) $h_{t}: E_{t,p}^{c}\rightarrow E_{t,p}^{c}$
is a $C^{1+\alpha}$ map, where $E_{t,p}^{c}:=E_{p}^{u}$
for every $t\in\left[0,1\right]$;
(iv) the eigenvalues of $Dh_{1}$ at $p$ are $\lambda_{2}$ and $\rho\in\mathbb{R}$
with $\left|\rho\right|<1$ (determined by ~\eqref{rhoeq1} and ~\eqref{eqdefrho} below)
(v) if $g_t(x,y,z)=(f(x), h_t(y,z))$ then $g_{t}\mid_{M\backslash B(p,\delta)}=g\mid_{M\backslash B(p,\delta)}$
for every $t\in [0,1]$.
Reducing $\delta>0$ if necessary, we may assume that $g_{t}\mid_{B(p,\delta)}$
is injective for every $t\in\left[0,1\right]$.

Roughly, the fixed point $p$ goes through a pitchfork bifurcation along a one dimensional 
subspace contained in the unstable subspace associated to the original dynamics 
on the open set $\mathcal{O}=B(p,\delta)$. 
Given $a\in(0,1)$ and the splitting $TM=E_{t}^{s}\oplus E_{t}^{c}$ consider the families of cone fields
\[
\mathcal{C}_{a}^{s}(x):=\left\{ v=v_{s}\oplus v_{c}:\ \left\Vert v_{c}\right\Vert \leq a \left\Vert v_{s}\right\Vert \right\} 
	\quad\text{and}\quad
\mathcal{C}_{a}^{u}(x):=\left\{ v=v_{s}\oplus v_{c}:\ \left\Vert v_{s}\right\Vert \leq  a  \left\Vert v_{c}\right\Vert \right\}
\]
and assume without loss of generality that $\|(u,v,w)\|=|u|+|v|+|w|$ for $(u,v,w) \in E^s\oplus E^u \oplus E^{uu}$ 
(local coordinates for $g$). If $a>0$ is small then the families of 
cone fields are $Dg_1$-invariant. Indeed,
one can write $v=v_{s}\oplus v_{c} \in\mathcal{C}^{s}(g_{1}(p))$, with $v_{*}\in E_{1,g_1(p)}^{*}$,
$*\in\{s,c\}$. 
Assume $\rho$ is such that 
\begin{equation}\label{rhoeq1}
\left|\lambda_{1}\cdot\rho^{-1}\right| < 1.
\end{equation}
Then
$
\left\Vert Dg_{1}(g_1(p))^{-1}\cdot v_{c}\right\Vert  
	  \leq 
	 	\left|\rho\right|^{-1}\left\Vert v_{c}\right\Vert 
	  \leq\left|\rho\right|^{-1} a \left\Vert v_{s}\right\Vert  
	 	\leq\left|\lambda_{1}\right|  \left|\rho\right|^{-1}\cdot a \left\Vert Dg_{1}(p)^{-1}\cdot v_{s}\right\Vert,
$
which proves that $Dg_{1}(p)^{-1}\cdot \mathcal{C}_{a}^{s}(g_1(p)) \subset \mathcal{C}_{|\lambda_1 \rho^{-1}| a}^{s}(p)$.
Analogously, if $v\in\mathcal{C}_{1}^{u}(p)$ then 
$\left\Vert Dg_{1}(p)\cdot v_{s}\right\Vert \leq\left|\lambda_{2}^{-1} \lambda_{1}\right| a \left\Vert Dg_{1}(p)\cdot v_{c}\right\Vert$ and, consequently, 
$Dg_{1}(p)\cdot \mathcal{C}_{a}^{u}(p) \subset \mathcal{C}_{|\lambda_1\lambda_2^{-1}|a}^{u}(g_1(p))$.
By continuity, since $\delta$ is assumed small, we get the invariance of the cone fields for all points on the ball $B(p,\delta)$.
Now observe that $g_{1}\mid_{B(p,\delta)}$ expands volume along $E_{1,x}^{c}$ as it coincides with 
$g\mid_{B(p,\delta)}$. Moreover, if $\rho$ is so 
\begin{equation}\label{eqdefrho}
|\lambda_2\cdot \rho |>1
\end{equation}
then $|\det(Dg_1(p)\mid_{E_{1,p}^{c}})|=|\lambda_2|\, |\rho|>1$
and, using that $a>0$ is small and $x\mapsto \det(Dg_1(x)\mid_{E_{1,x}^{c}})$ is continuous, 
we conclude that  $\inf_{x\in M} |\det(Dg_1(x)\mid_{E_{1,x}^{c}})| >1$ and that the same property holds
for the Jacobian along disks tangent to the cone field.
Simple computations show that $\left\Vert Dg_{1}(p)\cdot v\right\Vert \geq(1- a )\cdot\rho \left\Vert v\right\Vert $
for $v\in\mathcal{C}_{a}^{u}(p)$
and, 
by continuity of the derivative, we get that
$\left\Vert Dg_{1}(x)\cdot v\right\Vert \geq L \left\Vert v\right\Vert $
for every $x\in B(p,\delta)$ and $v\in C^{u}(x)$ with $L=L(\rho)$ close to $(1-a) \rho$, hence close to 1.
If $x\notin B(p,\delta)$, then $g_{1}\mid_{M\backslash B(p,\delta)}=g\mid_{M\backslash B(p,\delta)}$
and so $\left\Vert Dg_{1}(x)\cdot v\right\Vert \geq(1- a )\cdot\lambda_{3}\left\Vert v\right\Vert$ for every 
 $v\in\mathcal{C}_{a}^{u}(x)$.
Finally, since $\delta$ is assumed small, any partition $\left\{ V_{1},V_{2},\dots,V_{k}, V_{k+1}\right\} $ of $\mathbb T^3$ 
such that $V_{k+1}=B(p,\delta)$ and each $V_i$ contains a ball of radius $\delta$ and is contained in a ball of radius $2\delta$ satisfies that for every disk $D$ tangent to the cone field, the image $g_1(D\cap V_i )\cap V_j $ has at most one connected component for all $i,j\in \{ 1,2,\dots, k+1 \}$. 
The estimates in \cite[Section 7]{CV17} ensure that $g_1$ satisfies the hypothesis (H1)-(H4) and that there exist
$K,\vep>0$ uniform so that for every disk $D$ tangent to the cone field 
$$
Leb_D (n_1(\cdot)> n) \le K e^{-\vep n}
$$
for every $n\ge 1$. 
Then, a calculation identical to  \cite[Lemma~4]{VV10} guarantees that the density of $\mu$ with respect to the Lebesgue measure
along local unstable manifolds is bounded away from infinity, hence the first cone-hyperbolic time is integrable
with respect to $\mu$. Therefore, the sequence of $c$-cone-hyperbolic times is non-lacunar $\mu$-almost everywhere 
\cite[Corollary~3.8]{VV10}. As a consequence of our results the volume lemmas hold and we obtain exponential convergence
of Birkhoff averages for $g_1$ and all continuous observables with respect to the SRB measure.

%%%%%
\subsection*{Acknowledgments:} 
The second author was partially supported by a INCTMAT-CAPES Postdoctoral fellowship at Universidade Federal da Bahia. 
The third author was partially supported by CNPq-Brazil. 

%%%%%%


\begin{thebibliography}{ABV00}

\providecommand{\url}[1]{\texttt{#1}}
\expandafter\ifx\csname urlstyle\endcsname\relax
  \providecommand{\doi}[1]{doi: #1}\else
  \providecommand{\doi}{doi: \begingroup \urlstyle{rm}\Url}\fi
  

\bibitem{AP06}
\textsc{Araujo}, V. ;  \textsc{Pacifico}, M. ,  
\newblock Large deviations for non-uniformly expanding maps.
\newblock   \emph{J. Stat. Phys.} 25 (2006), pp. 411--453.
(revised version ArXiv:0601449v4)  

\bibitem{Bow75}
\textsc{Bowen}, R.
\newblock \emph{Equilibrium states and the ergodic theory of {A}nosov diffeomorphisms}.
\newblock volume 470 of {\em Lect. Notes in Math.}
\newblock Springer Verlag, 1975.


\bibitem{CFH}
\textsc{Cao, Y.; Feng, D.; Huang, W. }, 
\newblock The thermodynamic formalism for sub-additive potentials,
\newblock  \emph{Discrete Contin. Dyn. Syst.} 20 (2008), 639--657.



\bibitem{Car93}
\textsc{Carvalho}, M.:
\newblock {S}inai--{R}uelle--{B}owen measures for {N}-dimensional derived from {A}nosov diffeomorphisms
\newblock  \emph{Ergod. Th. $\&$ Dynam. Sys} 13 (1993), Nr. 1, pp. 21--44

\bibitem{CT}
\textsc{Chung}, Y.-M..: \textsc{Takahasi}, H.:
Large deviation principle for Benedicks-Carleson quadratic maps,
\emph{Comm. Math. Phys.}, 315 (2012), 803--826. 

\bibitem{CV17}
\textsc{Cruz}, A.; \textsc{Varandas}, P. :
\newblock SRB measures for partially hyperbolic attractors of endomorphisms,
\newblock  \emph{Ergod. Th. $\&$ Dynam. Sys} (to appear) 


\bibitem{LQZ03}
\textsc{Liu}, P.-D.; \textsc{Qian}, M.; \textsc{Zhao}, Y., 
Large deviations in Axiom A endomorphisms. \emph{Proc. Royal Soc. Edinburgh,} {133A} (2003), no. 6, 1379--1388. 


\bibitem{MU}
\textsc{Mihailescu}, E.; \textsc{Urbanski}, M.,
\newblock Entropy production for a class of inverse SRB measures.
\newblock  \emph{J. Stat. Phys.} 150 (2013), no. 5, 881--888


\bibitem{Mih12b}
\textsc{Mihailescu}, Eugen :
\newblock Equilibrium measures, prehistories distributions and fractal dimensions for endomorphisms.
\newblock  \emph{Discrete Cont. Dynam. Sys.} 32 (2012), Nr. 7, pp. 2485--2502.


\bibitem{Prz76}
\textsc{Przytycki}, F. :
\newblock Anosov endomorphisms.
\newblock  \emph{Studia mathematica} 3 (1976), Nr. 58, pp. 249--285.


\bibitem{QXZ09}
\textsc{Qian}, M. ; \textsc{Xie}, J.-S.  ; \textsc{Zhu}, S.:
\newblock \emph{Smooth ergodic theory for endomorphisms}.
\newblock Springer Verlag, 2009.


\bibitem{QZ02}
\textsc{Qian}, M. ; \textsc{Zhu}, S. :
\newblock {SRB} measures and Pesin's entropy formula for endomorphisms.
\newblock  \emph{Trans. Amer. Math. Soc.} 354 (2002), Nr. 4, pp. 1453--1471.


%
\bibitem{Sumi}
\textsc{Sumi}, N. :
\newblock A class of differentiable total maps which are topologically mixing.
\newblock  \emph{Proc. Amer. Math. Soc.} 127:3 (1999) 915--924.

\bibitem{Sternberg}
\textsc{Sternberg}, S. :
\newblock Lectures on differential geometry.
\newblock  AMS Chelsea Publishing, (1999) 2nd Edition.

\bibitem{Tsu05}
\textsc{Tsujii}, M. :
\newblock Physical measures for partially hyperbolic surface endomorphisms.
\newblock  \emph{Acta Math.} 194 (2005), Nr. 1, pp.37--132.

\bibitem{UW04}
\textsc{Urb{\'a}nski}, M. ; \textsc{Wolf}, C. :
\newblock {SRB} measures for Axiom {A} endomorphisms.
\newblock  \emph{Math. Res. Lett} 11 (2004), Nr. 5-6, pp. 785--797.

\bibitem{Var12}
\textsc{Varandas P.}, Non-uniform specification and large deviations for weak Gibbs measures, \emph{J. Stat. Phys.}, 146 (2012) 
330--358.

\bibitem{VZ0}
\textsc{Varandas} P.; 
\textsc{Zhao} Y.
Weak specification properties and large deviations for non-additive potentials, \emph{Ergod. Th. $\&$ Dynam. Sys.},
35:3 (2015) 968--993. 

\bibitem{VZ}
\textsc{Varandas} P.; 
\textsc{Zhao} Y.
Weak Gibbs measures: convergence to entropy, topological and geometrical aspects, \emph{Ergod. Th. $\&$ Dynam. Sys.}, 37:7, (2017), 2313--2336. 

\bibitem{VV10}
\textsc{Varandas}, P. ; \textsc{Viana}, M. :
\newblock Existence, uniqueness and stability of equilibrium states for non-uniformly expanding maps.
\newblock  \emph{Annales de l'Institut Henri Poincar\'e (C) Non Linear Analysis} 27 (2010), Nr. 2, pp.555--593.

\bibitem{You98}
\textsc{Young}, L.-S.:
\newblock Large deviations in dynamical systems.
\newblock  \emph{Trans. Amer. Math. Soc.} 318 (1990), no. 2, 525--543.


\end{thebibliography}
\end{document}